\newcommand{\fref}[1]{\hyperref[{#1}]{\ref*{#1}}}
\newcommand{\Nb}{\mathbb{N}}
\newcommand{\Pb}{\mathbb{P}}
\newcommand{\Rb}{\mathbb{R}}
\newcommand{\Zb}{\mathbb{Z}}
\newcommand{\Cc}{\mathcal{C}}
\newcommand{\Ec}{\mathcal{E}}
\newcommand{\Oc}{\mathcal{O}}
\newcommand{\Uc}{\mathcal{U}}
\newcommand{\Ls}{\mathscr{L}}
\newcommand{\Ms}{\mathscr{M}}
\newcommand{\op}{\mathrm{op}}
\newcommand{\ch}{\mathrm{CH}}
\newcommand{\isomto}{\stackrel{\sim}{\rightarrow}}
\newcommand{\rank}{\mathrm{rank}}
\newcommand{\Hom}{\mathrm{Hom}}
\newcommand{\Spec}{\mathrm{Spec}}
\newcommand{\wtil}{\widetilde}
\newcommand{\gr}{\mathrm{gr}}
\newcommand{\Laz}{\mathbb{L}}
\newcommand{\A}{\mathbb{A}}
\newcommand{\Proj}{\mathbb{P}}
\newcommand{\R}{\mathbb{R}}
\newcommand{\Z}{\mathbb{Z}}
\title{Oriented Borel--Moore homologies of toric varieties}
\author{\firstname{Toni} \middlename{M.} \lastname{Annala}}
\address{University of British Columbia \\ 
Department of Mathematics \\
1984 Mathematics Rd \\
Vancouver, BC V6T 1Z2 \\
Canada}
\email{tannala@math.ubc.ca}
\keywords{oriented Borel--Moore homology, toric varieties, algebraic bordism}
\subjclass{14F43, 14C99, 14M25}
\date{}
\begin{document}

\begin{abstract}
We generalize the well known Künneth formula for Chow groups to an arbitrary oriented Borel--Moore homology theory satisfying localization and descent (e.g. algebraic bordism) when taking a product with a toric variety. As a corollary we obtain a universal coefficient theorem for the operational cohomology rings. We also give a new, homological, description for the homology groups of smooth toric varieties, which allows us to compute the algebraic bordism groups of some singular toric varieties. 
\end{abstract}

\begin{altabstract}
Nous généralisons la formule de Künneth bien connue pour les groupes de Chow au cas d'une théorie homologique orientée de Borel-Moore arbitraire qui vérifient des propriétés de localisation et de descente (par exemple le bordisme algébrique) pour les produits à variétés toriques. En corollaire, nous obtenons un théorème des coefficients universels pour les anneaux de cohomologie opérationnelle. Nous donnons également une nouvelle description, de nature homologique, des groupes d'homologie des variétés toriques lisses, qui nous permet de calculer les groupes de bordisme algébrique de quelques variétés toriques singulières.
\end{altabstract}

\maketitle

\tableofcontents

\section{Introduction}
In \cite{FMSS} and \cite{FS} the authors discovered the following surprising result: the operational Chow cohomology rings $\op \ch^* (X_\Delta)$ of a \emph{complete} toric variety can be naturally identified with the $\Z$-dual of the usual Chow groups $\ch_* (X_\Delta)$ as Abelian groups. Such well behavedness was not anticipated from the operational cohomology rings, which were originally thought only as a temporary substitute until some sensible cohomology theory comes to take its place. 

The proof depends crucially on another result interesting in its own right. Namely, if we are given a toric variety $X_\Delta$, then the natural Künneth map 
\begin{equation*}
\ch_* (X_\Delta) \otimes_\Z \ch_*(Y) \to \ch_* (X_\Delta \times Y)
\end{equation*}
is an isomorphism for all varieties $Y$. This is a nontrivial result as it implies, among other things, that if $Y$ and $X_\Delta$ are smooth, then all the line bundles on $Y \times X_\Delta$ can be expressed as an exterior product of line bundles on $Y$ and $X_\Delta$, and this expression is unique up to isomorphism (neither of these facts hold in general). 

A natural question to ask is if these theorems generalize to other similar theories such as the equivariant Chow groups $\ch_*^T$ or algebraic bordism $\Omega_*$. The results would be useful especially in the case of algebraic bordism, where computations are hard, and the groups are known only in a handful of cases. Structural results on the behavior of these groups are therefore important to improve our understanding.

This paper grew out of the attempts to generalize the two results to other theories than $\ch_*$. It turns out that the Künneth formula is the harder one of these, and that the original proof for the universal coefficient theorem goes through for an arbitrary theory as soon as the Künneth isomorphism property is known. Moreover, the techniques that allow us to generalize the Künneth property also offer a nice homological description of the algebraic bordism $\Omega_*(X_\Delta)$ of a smooth toric variety. The previous descriptions for the groups were \emph{cohomological} --- they were actually descriptions of the cohomology rings, which are isomorphic to the homology groups by Poincaré duality. 

The problem with the cohomological descriptions is that it is not natural to work with some of the functorial aspects of the homology groups. Namely, describing pullback operations is easy, but describing the proper pushforward maps is hard. In order to use the descent exact sequence to calculate the homology groups of singular varieties, we need to be able to effectively work with pushforwards, and hence the need for a new description. In the case of algebraic bordism, no  calculations of homology groups of singular toric varieties have been carried out previously. In fact, as far as the author is aware, the examples computed in this paper are the first examples of algebraic bordism group of any singular variety.

\subsection{Outline of the article}

We begin with a section summarizing the necessary background material on oriented Borel--Moore homology theories, and on linear varieties. Most of the proofs are skipped, and instead references are given to appropriate texts. In the first real section of this paper, Section \fref{OBMofToric}, we characterize the equivariant homology groups $B^T_*(X_\Delta)$ of a smooth toric variety, where $B_*$ is an oriented Borel--Moore homology theory. The methods are inspired by those in \cite{Kri} and \cite{KU}. The main technical work of this section is done in the first three subsections. The equivariant groups $B_*^T$ are shown to determine $B_*$, and hence we also obtain a description for $B_*(X_\Delta)$ in the case $X_\Delta$ is nonsingular. 

The description of algebraic bordism of a smooth toric variety we obtain is nice enough to allow us to effectively carry out the computation of the algebraic bordism group $\Omega_*(X_\Delta)$ for some singular toric varieties in the Section \fref{Computations} using the descent exact sequence of \cite{Karu2}. In Section \fref{Generalities}, we compare our homological description of the equivariant groups $B^T_*(X_\Delta)$ with the previously studied cohomological \emph{Stanley--Reisner} presentations of the same groups. These two sections are independent from the other results of this paper, and can be skipped.

Section \fref{StructureResults} is devoted to proving the Künneth isomorphism property for all oriented Borel--Moore homologies $B_*$ when taking a product with a toric variety, and the universal coefficient theorem that follows from it. The first of these is obtained in \fref{Kunneth}
\begin{thm*}
Let $X_\Delta$ be a toric variety, and $B_*$ an oriented Borel--Moore homology theory satisfying localization and descent. Then the Künneth map
\begin{equation*}
B_*(X_\Delta) \otimes_{B^*} B_*(Y) \to B_*(X_\Delta \times Y)
\end{equation*}
is an isomorphism for all varieties $Y$.
\end{thm*}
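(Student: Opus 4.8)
The plan is to argue by induction on $\dim X_\Delta$, reducing the general (possibly singular) case to the smooth case by means of toric resolution of singularities together with the descent property, and to treat the smooth case directly with the explicit description of $B_*(X_\Delta)$ established in Section~\fref{OBMofToric}. Throughout, the Künneth map is the $B^*$-linear exterior product $\alpha \otimes \beta \mapsto \alpha \times \beta$, which commutes with proper pushforward and l.c.i.\ pullback by base change; this compatibility is what will let me compare the relevant exact sequences. As a preliminary observation, surjectivity of the Künneth map holds for all toric $X_\Delta$: filtering by the orbit stratification and using right-exactness of localization reduces surjectivity to the case of a torus, which follows from homotopy invariance (the $\mathbb{A}^1$-bundle localization exhibits $B_*(\mathbb{G}_m \times Y)$ as a quotient of $B_*(Y)$).

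First I would dispose of the smooth case, which also contains the base case $\dim X_\Delta = 0$. When $X_\Delta$ is nonsingular, the computation of Section~\fref{OBMofToric} exhibits $B_*(X_\Delta)$ as a free $B^*$-module on the classes of the torus-invariant subvarieties. Freeness makes $- \otimes_{B^*} B_*(Y)$ exact, so filtering $X_\Delta \times Y$ by the products $V(\sigma) \times Y$ of orbit closures with $Y$ and comparing the resulting localization sequences with the free presentation of $B_*(X_\Delta)$ shows that the chosen basis maps to a $B_*(Y)$-basis of $B_*(X_\Delta \times Y)$; hence the Künneth map is an isomorphism.

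For the inductive step with $X_\Delta$ singular, I would choose a toric resolution $\pi \colon \tilde X \to X_\Delta$ with $\tilde X$ smooth toric, sitting in an abstract blow-up square with center a torus-invariant closed subvariety $Z \subsetneq X_\Delta$ and exceptional locus $E \subseteq \tilde X$; both $Z$ and $E$ are toric of dimension $< \dim X_\Delta$, while $\tilde X$ is smooth toric. Crossing this square with $Y$ yields an abstract blow-up square over $X_\Delta \times Y$, to which the descent property of \cite{Karu2} supplies an exact sequence
\begin{equation*}
B_*(E\times Y) \to B_*(\tilde X\times Y)\oplus B_*(Z\times Y) \to B_*(X_\Delta\times Y)\to 0 .
\end{equation*}
Applying $- \otimes_{B^*} B_*(Y)$ to the descent sequence of $X_\Delta$ gives a right-exact sequence mapping to the one above via the Künneth maps. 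By the inductive hypothesis the Künneth maps for $Z$ and $E$ are isomorphisms, and that for the smooth $\tilde X$ is an isomorphism by the previous paragraph; in particular the map over $\tilde X \oplus Z$ is an isomorphism and the map over $E$ is surjective. A diagram chase in the right-exact four lemma --- which uses only right-exactness of the top row (automatic for a tensored right-exact sequence) together with exactness of the bottom row at its middle and right terms --- then forces the Künneth map over $X_\Delta$ to be an isomorphism, completing the induction.

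The main obstacle is precisely the point where this departs from the Chow-theoretic arguments of \cite{FMSS} and \cite{FS}: for a general OBM-homology theory the localization and descent sequences are only right exact, and $B_*(Y)$ need not be flat over $B^*$, so tensoring a descent sequence by $B_*(Y)$ destroys exactness in the middle and the usual five lemma is unavailable. The resolution is twofold. The genuinely exact sequence must be the descent sequence of the \emph{product} $X_\Delta \times Y$, furnished by descent applied to the crossed square, rather than the tensored sequence; and the comparison must be arranged so that only right-exactness of the tensored row and surjectivity of the Künneth map over the exceptional locus are needed. This is why the free, explicit description of $B_*$ in the smooth case and the surjectivity bookkeeping over tori, both obtained in Section~\fref{OBMofToric}, are the real crux of the whole argument.
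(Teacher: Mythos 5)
Your proposal has a genuine gap, and it sits exactly where the paper's proof does its real work: the smooth case. You assert that Section \fref{OBMofToric} exhibits $B_*(X_\Delta)$ as a \emph{free} $B^*$-module on the classes of torus-invariant subvarieties, but that is not what the paper proves, and it is false as stated: the nonequivariant group is generated by the orbit closures subject to nontrivial relations (for $\ch_*$ these are precisely the rational equivalences $\sum_{\sigma\supset\tau}\langle m,n_\sigma\rangle [V_\sigma]=0$ of \cite{FS}; already for $\Proj^1$ the two fixed points are identified). Worse, even granting some free presentation, your plan of ``filtering $X_\Delta\times Y$ by the $V(\sigma)\times Y$ and comparing localization sequences'' cannot produce injectivity of the K\"unneth map, because nonequivariantly the localization sequences are only right exact and the pushforwards $B_*(V_\sigma\times Y)\to B_*(X_\Delta\times Y)$ have no injectivity mechanism: first Chern classes are nilpotent, so the composite $i^*i_*$ (a top Chern class) can never act injectively. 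The paper's solution is to work in the $T$-equivariant theory $B^T_*$, where $B^T_*(O_\sigma\times Y)$ is a graded power series module $B^*_T/(\xi_{r+1},\dots,\xi_n)\,\widehat\otimes_{B^*}B_*(Y)$ on which multiplication by $\xi_1\cdots\xi_r$ \emph{is} injective (Lemma \fref{EquivInj} and Proposition \fref{TrivialAction}); this turns the localization sequences into short exact sequences, the $5$-lemma applies, and only at the very end does one recover the nonequivariant statement by applying $B^*\otimes_{B^*_T}(-)$, using $B^*\otimes_{B^*_T}B^T_*(X)\cong B_*(X)$. Your proposal never invokes the equivariant theory, and without it the injectivity half of the isomorphism is out of reach.

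There is a secondary structural problem in your inductive step. You induct on dimension and apply the inductive hypothesis to the center $Z$ and the exceptional locus $E$, but these are unions of orbit closures, not toric varieties, so the hypothesis as formulated does not apply to them; repairing this by a further induction over components runs into the same injectivity problem as above. The paper's design avoids this entirely: it uses the coequalizer form of descent, $B^T_*(\widetilde X\times_X\widetilde X)\to B^T_*(\widetilde X)\to B^T_*(X_\Delta)\to 0$, so that the middle term is the smooth toric variety $\widetilde X$ (already handled) and the left-hand term only needs \emph{surjectivity} of its K\"unneth map, which is cheap because $\widetilde X\times_X\widetilde X$ is a $T$-linear scheme (Proposition \fref{ModifiedTotaro}). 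Your four-lemma chase is itself valid, and your closing diagnosis of the right-exactness issue is correct; but the cure you propose (freeness plus the product descent sequence) does not supply the missing injectivity, whereas the equivariant power-series structure does.
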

\noindent Interesting examples where this result holds are the equivariant Chow groups $\ch_*^G$, algebraic bordism $\Omega_*$ and the equivariant algebraic bordism $\Omega_*^G$, although we prove the result in far greater generality (see Theorems \fref{Kunneth} and \fref{EquivariantKunneth}). Note that here we are not taking the tensor product over the integers $\Z$ but over the ring $B^* := B_{-*}(pt)$ which is the natural coefficient ring of the theory $B_*$.

In Section \fref{UnivCoeffSec} we prove the universal coefficient theorem for the operational cohomology rings $\op B^*(X_\Delta)$ when $X_\Delta$ is a complete toric variety. Here we need to assume that $B_*$ has slightly stronger properties than those of an oriented Borel--Moore homology theory, namely, $B_*$ must come with refined l.c.i. pullbacks. This is needed for the construction of the operational cohomology rings.
\begin{thm*}
Let $X_\Delta$ be a complete toric variety, and $B_*$ an refined oriented Borel--Moore homology theory satisfying localization and descent. Then there is a canonical identification
\begin{equation*}
\op B^*(X_\Delta) \cong \Hom_{B^*} (B_*(X_\Delta), B^*).
\end{equation*} 
\end{thm*}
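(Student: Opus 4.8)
The plan is to realise the stated identification as a Kronecker duality map and to prove it is an isomorphism by writing down an explicit inverse out of the Künneth theorem established in \fref{Kunneth}. First I would define
\begin{equation*}
\Phi\colon \op B^*(X_\Delta)\to \Hom_{B^*}(B_*(X_\Delta),B^*),\qquad \Phi(c)(\alpha)=\pi_*(c\cap\alpha),
\end{equation*}
where $\pi\colon X_\Delta\to\mathrm{pt}$ is the structure morphism. Since $X_\Delta$ is complete, $\pi$ is proper, so $\pi_*$ lands in $B^*=B_*(\mathrm{pt})$, and $\Phi(c)$ is $B^*$-linear by the projection formula. That $\Phi$ is well defined and $B^*$-linear is formal.

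The heart of the argument is a reduction, for an arbitrary operational class $c$, of its action on a general morphism to its action on a product, via the graph. Given $g\colon Y\to X_\Delta$ and $\beta\in B_*(Y)$, let $\gamma=(g,\mathrm{id}_Y)\colon Y\to X_\Delta\times Y$ be the graph, a closed immersion, and let $p\colon X_\Delta\times Y\to X_\Delta$ and $p_2\colon X_\Delta\times Y\to Y$ be the projections, so that $p\circ\gamma=g$ and $p_2\circ\gamma=\mathrm{id}_Y$. Compatibility of operational classes with proper pushforward along $\gamma$ gives $c_p(\gamma_*\beta)=\gamma_*(c_g(\beta))$, and applying $(p_2)_*$ yields
\begin{equation*}
c_g(\beta)=(p_2)_*\,c_p(\gamma_*\beta).
\end{equation*}
Now I would invoke \fref{Kunneth}: the Künneth map is an isomorphism $B_*(X_\Delta\times Y)\cong B_*(X_\Delta)\otimes_{B^*}B_*(Y)$, so write $\gamma_*\beta=\sum_i a_i\otimes b_i$. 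Using that operational classes commute with the external product (which follows from their compatibility with the flat pullback $X_\Delta\times W\to X_\Delta$ for $W$ a variety, together with pushforward), one gets $c_p(a_i\otimes b_i)=(c\cap a_i)\otimes b_i$; and since $X_\Delta$ is complete, $(p_2)_*$ acts on the Künneth decomposition by $(p_2)_*(u\otimes v)=\pi_*(u)\,v$. Combining these,
\begin{equation*}
c_g(\beta)=\sum_i \pi_*(c\cap a_i)\,b_i=\sum_i \Phi(c)(a_i)\,b_i,
\end{equation*}
which shows that the whole operational class $c$ is determined by the single functional $\Phi(c)$; in particular $\Phi$ is injective. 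The decisive point is that the final pushforward $(p_2)_*$ collapses the $X_\Delta$-factor to a point, so that only the degree $\pi_*(c\cap a_i)=\Phi(c)(a_i)$ survives.

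For surjectivity I would turn the last display into a definition: given $\phi\in\Hom_{B^*}(B_*(X_\Delta),B^*)$, set $c^\phi_g(\beta):=(\phi\otimes\mathrm{id}_{B_*(Y)})(\gamma_*\beta)$ under the Künneth identification, i.e. $c^\phi_g(\beta)=\sum_i\phi(a_i)\,b_i$. One then checks $\Phi(c^\phi)=\phi$ by taking $g=\mathrm{id}_{X_\Delta}$, so that $\gamma$ becomes the diagonal $\delta$, and using $(p_1)_*\delta_*=\mathrm{id}$ for the first projection $p_1\colon X_\Delta\times X_\Delta\to X_\Delta$ together with the $B^*$-linearity of $\phi$; conversely the computation above shows $c^{\Phi(c)}=c$, so the two maps are mutually inverse. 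The hard part will be verifying that $c^\phi$ really is an operational class, that is, that it is compatible with proper pushforward, flat pullback and the refined l.c.i. pullbacks in the variable $Y$, and satisfies the requisite commutativity constraints. Each such axiom should reduce, after transporting through the Künneth isomorphism and applying $\phi\otimes\mathrm{id}$, to the corresponding naturality of the Künneth map in $Y$; the genuinely delicate case is the refined l.c.i. pullback, where I must know that the isomorphism of \fref{Kunneth} is natural with respect to refined Gysin maps in the $Y$-factor, which holds because the Künneth map is induced by the external product. Tracking the grading so that $\op B^p$ corresponds to the degree-$p$ part of $\Hom_{B^*}(B_*(X_\Delta),B^*)$ is then routine bookkeeping.
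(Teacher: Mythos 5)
Your proposal is correct and follows essentially the same route as the paper: the same Kronecker duality map $c\mapsto\pi_*(c\cap-)$, injectivity via the graph embedding together with the operational axioms for proper pushforward and exterior product, and surjectivity by defining $c^\phi_g=(\phi\otimes\mathrm{id})\circ\gamma_*$ through the Künneth isomorphism of \fref{Kunneth} and verifying the operational axioms (which is exactly the FMSS argument the paper adapts). The only presentational difference is that the paper isolates the statement for an arbitrary proper variety satisfying the Künneth property and then specializes to complete toric varieties, but the content of the argument is identical to yours.
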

\noindent Notice how, much like in the Künneth formula, we are taking the $\Hom$ over the coefficient ring of the theory. Again, interesting examples where this result holds are $\ch_*^G$, $\Omega_*$ and $\Omega_*^G$.

\subsection{Conventions and notations}

All schemes will be separated and finite type over a field $k$ of characteristic zero. By \cite{Karu3} we can get rid of all the projectivity assumptions of algebraic bordism, and we will use this throughout the article (e.g., we have \emph{proper} pushforwards instead of projective ones and so on).

We will denote by $B_*$ a general oriented Borel--Moore homology theory on the category of $G$-schemes for a linear algebraic group $G$. Usually $G$ is either trivial or a split torus $T$. When restricted to smooth varieties $X$, $B_*$ gives an oriented cohomology theory $B^*$ in a natural way, where the contravariant functoriality is provided by the l.c.i. pullbacks, and the ring structure is provided by the intersection product. Note that cohomology has different grading: $B^*(X) = B_{n-*}(X)$ where $n$ is the dimension of $X$. 

By abuse of notation, we will denote by $B^*$ the $B_*$-homology group $B_{-*}(pt)$, which acts on all the groups $B_*(X)$ by the exterior product. It is the natural coefficient ring for the theory $B_*$. It does not matter whether we take the homological or cohomological grading for $B^*$ as the grading needs never to be explicitly mentioned.

If $G$ is a linear algebraic group, then we will denote by $B_*^G$ the $G$-equivariant version of $B_*$, whose construction is based on Totaro's approximation scheme for the classifying space $BG$. The construction of equivariant groups was carried out for a very general class of theories in \cite{Karu} which includes all oriented Borel--Moore homology theories. The groups $B^G_*$ have formally very similar properties to those of $B_*$, and indeed we can usually treat both cases simultaneously. 

\subsection{Acknowledgements}

The author would like to thank their advisor Kalle Karu for a patient introduction to algebraic cobordism, and for many helpful discussions where he pointed out multiple potential weaknesses and mistakes. The author is also grateful for the anonymous referee for multiple comments that greatly improved the exposition.

\section{Background}

In this section we are going to summarize technical background necessary for the results of this paper. This consists mostly of definitions, and the proofs are mostly omitted.

\subsection{Oriented Borel--Moore homology theories}

Let us denote by $\mathcal{C}$ a category, which is either the category of finite type separated $k$-schemes or the category of finite type separated $k$-schemes with an action by a linear algebraic group $G$ with $G$-equivariant morphisms. Both these categories have the notions of proper and l.c.i. morphisms, as well as the notion of transversality of a Cartesian square. Let us also recall the equivariant notion of a vector bundle. From now on, whenever we say \emph{scheme}, we mean a separated scheme of finite type over $k$.

\begin{defn}
Let $G$ be a linear algebraic group and $X$ a $G$-scheme, and let us denote the group action by $\mu: G \times X \to X$. A \emph{$G$-equivariant vector bundle} on $X$ consists of a locally free sheaf $\Ec$ (of constant and finite rank) on $X$ and an isomorphism
$$\theta: \mu^*(\Ec) \isomto \mathrm{pr}_2^*(\Ec)$$
of coherent sheaves on $G \times X$ satisfying a certain cocycle condition. The reader can consult \cite{MFK} Definition 1.6 for the precise definition in the case of line bundles, which immediately generalizes to vector bundles of higher rank.

As noted in \cite{MFK} Chapter 1 Section 3, there is a more geometric way of defining equivariant vector bundles. Namely, a $G$-equivariant vector bundle is
\begin{enumerate}
\item a vector bundle $\pi: E \to X$;
\item a $G$-action $\mu_E$ on the total space $E$;
\end{enumerate}
such that
\begin{enumerate}
\item[a)] $\pi: E \to X$ is $G$-equivariant;
\item[b)] the square
\begin{center}
\begin{tikzpicture}[scale=2]
\node (A1) at (0,1) {$\mathrm{pr}_2^* E = G \times E$};
\node (B1) at (1.5,1) {$E$};
\node (A2) at (0,0) {$G \times X$};
\node (B2) at (1.5,0) {$X$};

\path[every node/.style={font=\sffamily\small}]
(A1) edge[->] node[right]{$\pi'$} (A2)
(B1) edge[->] node[right]{$\pi$} (B2)
(A1) edge[->] node[above]{$\mu_E$} (B1)
(A2) edge[->] node[above]{$\mu$} (B2)
;
\end{tikzpicture}
\end{center}
is a morphism of vector bundles (on different base schemes).
\end{enumerate}

Given a $G$-equivariant vector bundle $E$ on $X$, we may form the corresponding \emph{equivariant projective bundle} $\Pb(E) \to X$. The \emph{rank} of $\Pb(E)$ is by definition $\rank(E)-1$. 
\end{defn}

\begin{defn}\label{OBMHomologyDef}
Let everything be as above. Denote by $\mathcal{C}'$ the subcategory of $\mathcal{C}$ whose objects are the objects of $\mathcal{C}$, and whose morphisms are the proper morphisms in $\mathcal{C}$. An \emph{oriented Borel--Moore homology theory} consists of


\begin{enumerate}
\item[($D_1$)] a covariant functor $X \mapsto B_*(X)$ from $\Cc'$ to the category of graded Abelian groups;


\item[($D_2$)] (\emph{l.c.i. pullbacks}) a group homomorphism
$$f^*: B_*(Y) \to B_{*+d}(X)$$
for all l.c.i. morphisms $f: X \to Y$ of relative dimension $d$;

\item[($D_3$)] (\emph{exterior product}) a graded bilinear pairing
\begin{equation*}
\times: B_*(X) \times B_*(Y) \to B_*(X \times Y)
\end{equation*}
for all $X$ and Y, which is commutative, associative, and has a unit $1 \in B_0(\Spec(k))$.
\end{enumerate} 
Notice that this data allows us to define the \emph{first Chern class operator} 
$$c_1(\Ls) := s^* \circ s_*: B_*(X) \to B_{*-1}(X)$$
of a line bundle $\mathscr{L}$ on $X$, where $s$ is the zero section $X \to \Ls$. Of course, if $\Cc$ is the category of equivariant schemes, then a line bundle implicitly means an equivariant line bundle.  

The data of an oriented Borel--Moore homology theory are required to satisfy the following axioms:
\begin{enumerate}
\item[($Ad$)] (\emph{additivity}): the natural map 
\begin{equation*}
\bigoplus_{i=1}^n B_*(X_i) \to B_*\left( \coprod_{i=1}^n X_i \right)
\end{equation*}
is an isomorphism;

\item[($BM_1$)] the l.c.i. pullbacks are contravariantly functorial;

\item[($BM_2$)] given a transverse Cartesian square
\begin{center}
\begin{tikzpicture}[scale=2]
\node (A1) at (0,1) {$X'$};
\node (B1) at (1,1) {$Y'$};
\node (A2) at (0,0) {$X$};
\node (B2) at (1,0) {$Y$};

\path[every node/.style={font=\sffamily\small}]
(A1) edge[->] node[right]{$f'$} (A2)
(B1) edge[->] node[right]{$f$} (B2)
(A1) edge[->] node[above]{$g'$} (B1)
(A2) edge[->] node[above]{$g$} (B2)
;
\end{tikzpicture}
\end{center}
with $f$ l.c.i. and $g$ proper, then 
$$f^* \circ g_* = g'_* \circ f'^*$$
(note that being l.c.i. is stable under transverse pullbacks);

\item[($BM_3$)] given proper morphisms $f: X \to Y$ and $g: X' \to Y'$, then 
\begin{equation*}
(f \times g)_* (\alpha \times \beta) = f_*(\alpha) \times g_*(\beta),
\end{equation*} 
and similarly if $f$ and $g$ are l.c.i., then 
\begin{equation*}
(f \times g)^* (\alpha \times \beta) = f^*(\alpha) \times g^*(\beta);
\end{equation*} 

\item[($PB$)] (\emph{projective bundle formula}) given a projective bundle $\pi: \Pb(E) \to X$, $r := \rank(E) - 1$, then the morphisms
\begin{equation*}
c_1(\Oc(1))^i \circ \pi^*: B_{*-r+i}(X) \to B_*(\Pb(E))
\end{equation*}
induce an isomorphism
\begin{equation*}
\bigoplus_{i=0}^r B_{*-r+i}(X) \to B_*(\Pb(E)); 
\end{equation*}

\item[($EH$)] (\emph{extended homotopy property}) given a vector bundle $V \to X$ of rank $r$ and $p: E \to X$ a $V$-torsor, then the pullback morphism 
\begin{equation*}
p^*: B_*(X) \to B_{*+r}(E)
\end{equation*}
is an isomorphism.
\end{enumerate}
\end{defn}

\begin{rem}
In the book \cite{LM} it is also required that $B_*$ should satisfy a cellular decomposition axiom. As we do not need this for anything, we will omit it from the definition.
\end{rem} 

\begin{rem}
The exterior product makes $B_*(X)$ a $B^* = B_*(pt)$-module for all $X$. It follows from the requirements imposed on the exterior product $\times$ that the pushforwards and pullbacks are $B^*$-linear maps. Moreover, from the commutativity and the associativity assumptions it follows that the exterior product associated to $X \times Y$ is $B^*$-bilinear. Hence we have a natural Künneth morphism
\begin{equation*}
B_*(X) \otimes_{B^*} B_*(Y) \to B_*(X \times Y),
\end{equation*}
which is one of the protagonists of the paper.
\end{rem}

Well known examples of such theories include algebraic bordism $\Omega_*$ (see \cite{LM}) and the Chow groups $\ch_*$ (see \cite{Ful}). One can refine this definition to require the theory to have \emph{refined l.c.i.} pullbacks. This means that for a l.c.i. morphism $f: X \to Y$ of relative dimension $d$ and \emph{any} morphism $g: Y' \to Y$, we get a pullback map $f^!_g: B_*(Y') \to B_{*+r}(X')$, where $X'$ is the pullback of $X$ along $g$. These refined pullbacks are required to satisfy certain compatibility conditions, see \cite{Ful} or \cite{Karu} for details. Both the Chow groups and algebraic bordism have refined l.c.i. pullbacks. Next we consider two useful extra properties one can require from an oriented Borel--Moore homology theory.

\begin{defn}
Let $B_*$ be an oriented Borel--Moore homology theory. We say that $B_*$ \emph{satisfies localization} if for all closed embeddings $i: Z \hookrightarrow X$, the sequence 
\begin{equation*}
B_*(Z) \stackrel{i_*}{\to} B_*(X) \stackrel{j^*}{\to} B_*(U) \to 0
\end{equation*}
is exact, where $j: U \hookrightarrow X$ is the inclusion of the open complement of $Z$.
\end{defn}

Recall that an \emph{envelope} $\wtil X \to X$ is a morphism such that any irreducible subvariety of $X$ is birationally mapped onto by an irreducible subvariety of $\widetilde X$.

\begin{defn}
Let $B_*$ be an oriented Borel--Moore homology theory. We say that $B_*$ \emph{satisfies descent} if for any proper envelope $\pi: \wtil X \to X$, the induced sequence
\begin{equation*}
B_*(\widetilde X \times_X \widetilde X) \xrightarrow{\mathrm{pr}_{1*} - \mathrm{pr}_{2*}} B_*(\widetilde X) \xrightarrow{\pi_*} B_*(X) \to 0
\end{equation*}
is exact.
\end{defn}

It is not known whether or not these properties hold for a general Borel--Moore homology theory, but they are known to hold for $\Omega_*$ by \cite{LM,Karu2}, and therefore also for all oriented Borel--Moore homology theories obtained from algebraic bordism by a change of coefficients.

\subsection{Formal group laws}

A \emph{formal group law} over a commutative ring $A$ is an element $F \in A [[x,y]]$, satisfying the following properties:
\begin{enumerate}
\item \emph{Neutral element:} $F(x,0) = x$ and $F(0,y)=y$.
\item \emph{Commutativity:} $F(x,y) = F(y,x)$.
\item \emph{Associativity:} $F(x,F(y,z)) = F(F(x,y),z)$.
\end{enumerate}
It is immediate from the first two restrictions that $F$ must be of form
\begin{equation*}
F(x,y) = x + y + \alpha_{11}xy + \alpha_{21}x^2y + \alpha_{12}xy^2  \cdots
\end{equation*}
where $\alpha_{ij} \in A$, and $\alpha_{ij} = \alpha_{ji}$. The third axiom induces more complicated relations between the coefficients.

It is not hard to show that such an $F$ has a \emph{formal inverse series}, i.e., a power series $F_-$ in one variable such that 
\begin{equation*}
F(x, F_-(x)) = F(F_-(x), x) = 0.
\end{equation*}
It is often more convenient to denote $F(x,y)$ by $x +_F y$ and $F_-(x) = -_Fx$. Moreover, the repeated addition $x +_F \cdots +_F x$ (here $x$ appears $n$ times) can be denoted by $n \cdot_F x$. It is not hard to show that these behave as one would expect, i.e., $n \cdot_F x +_F n \cdot_F y = n \cdot_F (x+_Fy)$, $-n \cdot_F x = n \cdot_F (-x)$ and so on.

There is universal such a group law. Consider the infinitely generated $\Z$-algebra $\Z[a_{11}, a_{21}, a_{12}, ...]$ with the minimal relations making the power series
\begin{equation*}
x + y + a_{11} xy + a_{21} x^2y + a_{12} xy^2 + \cdots 
\end{equation*}
a formal group law. The resulting ring is known as the \emph{Lazard ring} $\Laz$, and it is characterized by the universal property that the set of ring homomorphisms $\psi: \Laz \to A$ is in natural one-to-one correspondence with the set of formal group laws with coefficients in $A$ (the coefficients of the corresponding formal group law are given by $\psi(a_{ij})$). If we set the degree of $a_{ij}$ to be $i+j -1$, then all the relations respect grading, so we see that the Lazard ring has a natural grading.

Formal group laws describe the behavior of Chern classes of line bundles in oriented Borel--Moore homology theories $B_*$. Namely, it turns out that there always exists a formal group law $F \in B_*[[x,y]]$ so that given line bundles $\Ls$ and $\Ms$ on $X$, we have
$$c_1(\Ls \otimes \Ms) = F(c_1(\Ls), c_1(\Ms)) : B_*(X) \to B_{*-1}(X).$$ 
Usually infinite expressions are not allowed for our theories, so in order to make sense of the formal group law property one must require that the Chern classes are nilpotent in a suitable sense. However, sometimes infinite expressions make perfect sense (namely in equivariant theories), and then we can make sense of the formal group law topologically (i.e., the series will converge to something).

For Chow groups $\ch_*$, the formal group law is known to be the \emph{additive} formal group law $F(x,y) = x + y$. A more complicated example is the algebraic bordism $\Omega_*$ --- the universal oriented Borel--Moore homology theory --- whose formal group law is the universal formal group law over the Lazard ring defined above.

\subsection{Linear and $G$-linear varieties}

In the paper \cite{Tot} it was shown that the Chow groups satisfy the Künneth formula for products of arbitrary variety and a \emph{linear scheme}. Unfortunately the proof made use of higher Chow groups, and therefore it cannot be generalized to, say, algebraic bordism. However, part of the argument can be salvaged, and the purpose of this section is to record this. First, however, we need a definition.


\begin{defn}
Let $G$ be a linear algebraic group. A \emph{$G$-linear variety} is a $G$-variety obtained by the following inductive procedure.
\begin{enumerate}
\item A $G$-representation (thought as a variety) is $G$-linear.

\item If we have a $G$-equivariant closed embedding $Z \hookrightarrow X$ of $G$-linear varieties, then the complement $X - Z$ is $G$-linear.

\item If we have a $G$-equivariant filtration $\emptyset = X_{-1} \subset X_0 \subset \cdots \subset X_r = X$ such that $X_i - X_{i-1}$ is a $G$-linear variety for all $i=0..r$, then also $X$ is $G$-linear.  
\end{enumerate}
\end{defn}

Note that in the special case where $G$ is trivial, the above definition recovers the notion of a \emph{linear variety}.

\begin{prop}\label{ModifiedTotaro}
Let $X$ be a $G$-linear variety, $Y$ be an arbitrary $G$-scheme, and let $B_*$ be an oriented Borel--Moore homology theory (on $G$-schemes) satisfying localization. Then the Künneth map
\begin{equation*}
B_*(X) \otimes_{B^*} B_*(Y) \to  B_*(X \times Y)
\end{equation*}
is surjective.
\end{prop}
\begin{proof}
The fact that the map is surjective when taking a product with a $G$-representation follows from the extended homotopy property and compatibility properties of exterior products and pullbacks. 

Assume that we have a closed inclusion $Z \hookrightarrow X$ such that the claim holds for $X - Z$ and $Z$. Then the localization exact sequence yields us a diagram
\begin{center}
\begin{tikzpicture}[scale=2]
\node (A1) at (0,1) {$B_*(Z) \otimes_{B^*} B_*(Y)$};
\node (B1) at (2.2,1) {$B_*(X) \otimes_{B^*} B_*(Y)$};
\node (C1) at (4.4,1) {$B_*(X-Z) \otimes_{B^*} B_*(Y)$};
\node (D1) at (5.8,1) {$0$};
\node (A2) at (0,0) {$B_*(Z \times Y)$};
\node (B2) at (2.2,0) {$B_*(X \times Y)$};
\node (C2) at (4.4,0) {$B_*((X - Z) \times Y)$};
\node (D2) at (5.8,0) {$0$};

\path[every node/.style={font=\sffamily\small}]
(A1) edge[->] (A2)
(B1) edge[->] (B2)
(C1) edge[->] (C2)
(A1) edge[->] (B1)
(B1) edge[->] (C1)
(C1) edge[->] (D1)
(A2) edge[->] (B2)
(B2) edge[->] (C2)
(C2) edge[->] (D2)
;
\end{tikzpicture}
\end{center}
where the leftmost and rightmost vertical maps are surjections. From the 4-lemma it follows that also the middle vertical map is surjective. This shows that surjectivity is preserved in the third operation defining linear varieties. Showing that it is preserved in the second operation is similar but easier, so the claim follows.
\end{proof}

\section{Homology groups of toric varieties}\label{OBMofToric}

The main purpose of this section is to study the structure of $B_*$-homology groups of toric varieties. Throughout the section, unless otherwise specified, $T$ will be a fixed split torus of rank $n$ and $B_*$ will be an arbitrary oriented Borel--Moore homology theory on the category $\Cc$ of $k$-schemes (without a group action) \emph{satisfying localization}. Our strategy is to first study the equivariant homology groups $B_*^T$, whose construction is recalled in the first subsection, and then reduce the study of non-equivariant groups $B_*$ to the equivariant ones.  The results of this section are crucial for the Künneth formula and duality results proven in Section \fref{StructureResults}.

Let $N \cong \Z^n$ be the lattice of one-parameter subgroups of $T$, and let us denote by $N_\Rb$ the vector space defined to be the tensor product $\Rb \otimes N$. Recall that a \emph{toric variety} $X_\Delta$ for $T$ is determined by a fan $\Delta$ consisting of rational strictly convex polyhedral cones in $N_\Rb$ (see \cite{Ful2} for more details). The dual lattice $\Hom_\Z(N, \Z)$, which is the \emph{character lattice} of $T$, is denoted by $M$, and we define $M_\Rb := \Rb \otimes M$. Let us also fix a $\Zb$-basis $e_1,...,e_n$ of $N$, and let us denote the corresponding dual basis of $M$ by $e^*_1,...,e^*_n$.

\subsection{Review of the construction and basic properties of equivariant groups}

The construction of the equivariant version $B^G_*$ of $B_*$ is based on the approximation scheme of Totaro. This has led to successful study of equivariant Chow groups \cite{EG} and algebraic bordism \cite{Des}\cite{Kri}\cite{KU}, among other theories. Recently in \cite{Karu}, the construction was carried out in a very general setting of so called ROBM pre-homology theories satisfying certain conditions. The same construction works for oriented Borel--Moore homology theories, and it is the construction we are going to use here. We note that in order to make sense of the formal group laws in the theory $B_*^G$, one is forced to consider it as a topological group, whose topology is given by the filtration naturally arising from the definition. This is because equivariant version of the theory no longer has to satisfy dimension axiom, and hence Chern classes of line bundles may fail to be nilpotent.

The following definition will be of crucial importance:

\begin{defn}
Let $G$ be a linear algebraic group. A \emph{good system of representations} for $G$ consists of pairs $(V_i,U_i)$, where $V_i$ is a $G$-representation and $U_i \subset V_i$ is an open subscheme. These are required to satisfy the following conditions
\begin{enumerate}
\item $G$ acts freely on $U_i$, and the geometric quotient $U_i / G$ exists as a quasi-projective $k$-scheme;
 
\item the $G$-representation $V_{i+1}$ splits as $V_i \oplus W_i$ for all $i$;

\item $U_i \oplus W_i \subset U_{i+1}$; 

\item the codimension of $V_i - U_i$ in $V_i$ is strictly smaller than that of $V_j - U_j$ in $V_j$ whenever $i<j$. 
\end{enumerate}
\end{defn}

\begin{ex}\label{StdSystemOfRepsForT}
An example of a good system of representations for $T$ would be 
$$\big((\A^{i+1})^n, (\A^{i+1}-0)^n\big),$$ 
where the $j^{th}$ coordinate of the torus acts diagonally on the $j^{th}$ copy of $\A^{i+1}$. The quotient can be identified as
$$(\A^{i+1}-0)^n / T \cong (\Proj^i)^n,$$
which is projective.
\end{ex}

Before defining the equivariant groups let us recall that the \emph{mixed quotient} $X \times^G Y$ of two $G$-schemes is by definition (if it exists) the geometric quotient 
$$(X \times Y) / G$$
where $G$ acts on $X \times Y$ by the \emph{anti-diagonal} action (i.e., we invert the action on $X$).

\begin{defn}[cf. \cite{HML} Definition 12, \cite{Karu} Section 4.2]
Let $G$ be a linear algebraic group. The equivariant groups $B^G_*(X)$ of a $G$-scheme $X$ are defined as 
$$B^G_k(X) := \varprojlim_i B_{k + \dim(U_i) - g}(U_i \times^G X),$$
where $U_i$ are in any good system of representations of $G$, and $g = \dim(G)$. Note that the connecting morphisms of the inverse system are given by the pullbacks along the induced l.c.i. morphisms $U_i \times^G X \hookrightarrow U_j \times^G X$. We also set
$$B^G_*(X) := \bigoplus_{i \in \Zb} B^G_i(X).$$
For a proof that these groups are well defined, see \cite{HML} Theorem 16 and \cite{Karu} Proposition 4.4.
\end{defn}

We will mostly deal with torus equivariant theories. However, suppose $B_*$ is an oriented Borel--Moore homology theory on $G$-schemes for some linear algebraic group $G$. If we would like to define $T$-equivariant versions of $B_*$, then we would have to verify that the arguments of \cite{HML} proving that $B^T_*$ is well defined extend to the $G$-equivariant setting. It is much easier to use the following result, which proves the well definedness in the most important special case.

\begin{prop}\label{GTEquivariant}
Let $G$ be a linear algebraic group, and let $X$ be a $T \times G$-scheme. Then there is a natural isomorphism
$$B^{T \times G}_*(X) \cong (B^G)^T_*(X).$$
In particular, the $T$-equivariant version of $B^G_*$ is well defined.
\end{prop}
\begin{proof}
Let $(V_i, U_i)$ be a good system of representations for $T$, and $(V'_i, U'_i)$ a good system of representations for $G$, so that $(V_i \times V'_i, U_i \times U'_i)$ is a good system of representations for $T \times G$. Then, denoting by $g$ the dimension of $G$, we have
\begin{align*}
B^{T \times G}_k(X) &= \varprojlim_i B_{k + \dim(U_i) + \dim(U'_i) - n - g}\big((U_i \times U'_i) \times^{T \times G} X \big) \\
&= \varprojlim_i B_{k + \dim(U_i) + \dim(U'_i) - n - g}\big(U_i \times^T (U'_i \times^G X) \big).
\end{align*}
By cofinality, this is equivalent to
$$\varprojlim_i \varprojlim_j B_{k + \dim(U_i) + \dim(U'_j) - n - g}\big(U_i \times^T (U'_j \times^G X) \big),$$
which is by definition just
$$\varprojlim_i  B^G_{k + \dim(U_i) - n} (U_i \times^T X),$$
finishing the proof.
\end{proof}

We record the fact that computing the $T$-equivariant group of the point is trivial.

\begin{prop}
There is a natural isomorphism
$$B^T_*(pt) \cong B_*(pt)[[\xi_1, ..., \xi_n]]_\gr,$$
where $\xi_i$ is the first $T$-equivariant Chern class of the character line bundle corresponding to the basis element $e^*_i$ for $M$. The right hand side is the \emph{graded power series ring} (each $\xi_i$ having degree $-1$), consisting of finite linear combinations of power series of constant degree, i.e., of form
$$\sum_{\overline m \in \Nb^n} a_{\overline m} \cdot \xi_1^{m_1} \cdots \xi_n^{m_n}$$
with each $a_{\overline m} \cdot \xi_1^{m_1} \cdots \xi_n^{m_n}$ homogeneous and of constant degree.
\end{prop}
\begin{proof}
Indeed, one uses the good system of Example \fref{StdSystemOfRepsForT} to compute this group, and the rest follows from the projective bundle formula and the definition of the first (equivariant) Chern class in an oriented Borel--Moore homology theory.
\end{proof}

We immediately obtain the following result. Notice how it highlights that if $B_*$ has a complicated formal group law, changing the basis of $N$ (and hence the dual basis of $M$) can lead to complicated formulas.

\begin{prop}\label{ChernClassesOfCharactersProp}
Consider a character 
$$m = a_1 e^*_1 + \cdots + a_n e^*_n \in M,$$
and let $V_m$ be the corresponding $T$-equivariant line bundle on $pt$. Then
$$c_1(V_m) = a_1 \cdot_F \xi_1 +_F \cdots +_F a_n \cdot_F \xi_n \in B_{-1}(pt),$$
where $F$ is the formal group law of the theory $B_*$.
\end{prop}

\subsubsection*{The oriented Borel--Moore homology structure of the equivariant groups}

Here we recall some useful formal properties of the equivariant groups $B^G_*$. The justifications for all the claims can be essentially found in Section 4 of \cite{HML} (see also Section 5 of \cite{HML} and Section 4.3 of \cite{Karu}). 

We claim that the equivariant homology groups form an oriented Borel--Moore homology theory on the category of $G$-schemes in the sense of Definition \fref{OBMHomologyDef}. Indeed, most of the claims are pretty straightforward:
\begin{itemize}
\item $B^G_*$ has functorial proper pushforwards and l.c.i. pullbacks along equivariant maps constructed as a limit in the obvious way;

\item the axioms ($Ad$), ($BM_1$) and ($BM_2$) follow from the obvious limiting argument; 

\item the axioms ($PB$) and ($EH$) require understanding also a little bit of the geometry of the situation, but they also follow (see \cite{HML} Sections 4.3 and 4.4). 
\end{itemize}
Moreover, since the transition maps of any inverse system used to construct $B^G_*$ are surjective, it satisfies the Mittag--Leffler condition, and therefore
\begin{itemize}
\item $B^G_*$ satisfies localization, and if $B_*$ satisfies descent, then so does $B^G_*$.
\end{itemize}

The construction of the exterior product is slightly more subtle. Let $(V_i, U_i)$ be a good system of $G$-representations, and let $X$ and $Y$ be $G$-schemes. Then the obvious morphism
$$\phi: (U_i \times U_i) \times^G (X \times Y) \to (U_i \times^G X) \times (U_i \times^G Y)$$
is smooth, and we define the exterior product as the limit of compositions
\begin{center}
\begin{tikzpicture}[scale=1]
\node (A1) at (0,1) {$B_*(U_i \times^G X) \times B_*(U_i \times^G Y)$};
\node (B1) at (7,1) {$B_*((U_i \times^G X) \times (U_i \times^G Y))$};
\node (A2) at (7,-0.5) {$ B_*((U_i \times U_i) \times^G (X \times Y));$};

\path[every node/.style={font=\sffamily\small}]
(B1) edge[->] node[right]{$\phi^*$} (A2)
(A1) edge[->] node[above]{$\times$} (B1)
;
\end{tikzpicture}
\end{center}
we note that also $(V_i \times V_i, U_i \times U_i)$ is a good system of $G$-representations. With these definitions
\begin{itemize}
\item the assumptions of ($D_3$) on $\times$ and the axiom ($BM_3$) are satisfied for $B^G_*$. 
\end{itemize}
This concludes our analysis.



\subsubsection*{First properties of the torus equivariant groups}

We begin listing properties of $B_*^T$ that are more or less direct from the definition.

\begin{prop}\label{TrivialAction}
Let $X$ be a $k$-scheme with a trivial $T$-action. Then
\begin{equation*}
B_*^T(X) =  B^*_T \widehat \otimes_{B^*} B_*(X).
\end{equation*}
\end{prop}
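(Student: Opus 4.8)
The statement to prove is that for a $k$-scheme $X$ carrying the trivial $T$-action, we have
\begin{equation*}
B_*^T(X) = B^*_T \widehat\otimes_{B^*} B_*(X),
\end{equation*}
where $\widehat\otimes$ denotes the completed tensor product appropriate to the graded power series structure. The plan is to unwind the definition of the equivariant group via Totaro's approximation and reduce the claim to a concrete statement about $B_*$ of the finite-dimensional approximations $(\Proj^i)^n \times X$, then pass to the limit.

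First I would observe that since the $T$-action on $X$ is trivial, the mixed quotient $U_i \stackrel{T}{\times} X$ is simply $(U_i/T) \times X$, and for the good system of representations described in the excerpt this is $(\Proj^i)^n \times X$. Thus the equivariant group is computed as the limit $\varprojlim B_{*+ni}\bigl((\Proj^i)^n \times X\bigr)$. The key input is the projective bundle formula ($PB$): iterating it along the $n$ projective bundle factors expresses $B_*\bigl((\Proj^i)^n \times X\bigr)$ as a free module over $B_*(X)$ on the monomials $\xi_1^{a_1}\cdots\xi_n^{a_n}$ with each $a_j \le i$, where $\xi_j$ is the Chern class of $\pi_j^*\mathcal O(-1)$. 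Concretely, the maps $\xi_1^{a_1}\cdots\xi_n^{a_n}\,q^*$ (with $q$ the projection to $X$) induce an isomorphism
\begin{equation*}
\bigoplus_{0 \le a_1,\dots,a_n \le i} B_{*+ (\sum a_j) - ni}(X) \isomto B_*\bigl((\Proj^i)^n \times X\bigr).
\end{equation*}
Here I am using the compatibility of Chern class operations with pullback and the fact that the $\xi_j$ on the approximation restrict to the classes defining $B^T_*(pt) = B^*_T$, exactly as recalled in the discussion of the equivariant exterior product preceding the statement.

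Next I would take the inverse limit over $i$ in each fixed degree. On the right-hand side of the displayed isomorphism the transition maps are induced by the inclusions of the approximating quotients, and under the projective bundle identification they simply truncate the allowed exponent ranges, exactly mirroring the transition maps that define $B^*_T = B^*[[\xi_1,\dots,\xi_n]]$ in the basic-properties discussion. Passing to the limit therefore turns the finite direct sums of copies of $B_*(X)$ into the graded power series module: an element of $B^T_*(X)$ is a (degreewise finite) collection of homogeneous power series $\sum b_{a_1,\dots,a_n}\,\xi_1^{a_1}\cdots\xi_n^{a_n}$ with coefficients $b_{a_1,\dots,a_n} \in B_*(X)$. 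This is precisely the description of $B^*_T \widehat\otimes_{B^*} B_*(X)$ as the set of $B^*$-linear combinations, in each degree, of monomials in the $\xi_j$ with coefficients in $B_*(X)$; the completion $\widehat\otimes$ is what accounts for taking the limit rather than an honest tensor product. One must check that this identification is $B^*_T$-linear and independent of the chosen good system of representations, but the latter follows from the general fact, already used in the construction, that $B^T_*$ does not depend on this choice.

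The main obstacle I anticipate is bookkeeping for the grading and for the completion. One has to be careful that the inverse limit is taken degree by degree, that the generators $\xi_j$ carry degree $-1$, and that the coefficients $b_{a_1,\dots,a_n}$ live in the correct graded piece $B_{k + (\sum a_j) + \cdots}(X)$ so that the limit genuinely produces the \emph{completed} tensor product $B^*_T\,\widehat\otimes_{B^*}\,B_*(X)$ and not the uncompleted $B^*_T \otimes_{B^*} B_*(X)$ (these agree precisely when $B_*(X)$ vanishes in high enough degree, as noted for $B^*_T$ itself in the excerpt). The essential geometric content — the projective bundle formula and the triviality of the action making $U_i\stackrel{T}{\times}X$ a product — is straightforward; the real work is verifying that the limit of the base-change isomorphisms is compatible with the transition maps on both sides, so that the Künneth-type comparison map is an isomorphism after completion.
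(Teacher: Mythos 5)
Your proposal is correct and follows essentially the same route as the paper's proof: identify $U_i \stackrel{T}{\times} X$ with $(\Proj^i)^n \times X$ using triviality of the action, apply the iterated projective bundle formula to get $B^*[\xi_1,\dots,\xi_n]/(\xi_1^{i+1},\dots,\xi_n^{i+1}) \otimes_{B^*} B_*(X)$ at each finite stage, and pass to the limit to obtain the graded power series description, i.e.\ the completed tensor product. The degree bookkeeping you flag as the main concern is exactly the check the paper carries out, so there is no gap.
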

\begin{proof}
As the action of $T$ on $X$ is trivial, we can identify $X \times^T U_i$ with $X \times (\Proj^i)^n$. Using the projective bundle formula, we see that
\begin{equation*}
B^* [\xi_1,...,\xi_n] / (\xi_1^{i+1},...,\xi_n^{i+1}) \otimes_{B^*} B_*(X) \to B_{* + ni}(U_i \times^T X)
\end{equation*}
is an isomorphism. The limit of the left groups is going to be linear combinations of power series of form 
\begin{equation*}
\sum_{i_1,...,i_n} \xi_1^{i_1} \cdots \xi_n^{i_n} b_{i_1,...,i_n}
\end{equation*}
where $i_1,...,i_n$ run over all the natural numbers, and 
\begin{equation*}
b_{i_1,...,i_n} \in B_{k + i_1 + \cdots + i_n} (X)
\end{equation*} 
for fixed $k$. This coincides with the completed graded tensor product of the two, where $B_*(X)$ is taken to have the trivial filtration.
\end{proof}

Another result easily proven, which is a special case of more general Morita isomorphism principle, is the following.

\begin{prop}
Let $X$ be a $T$-scheme. Then $B_{*+n}^T(T \times X)$, where the product variety has the diagonal action, is naturally isomorphic to $B_*(X)$.
\end{prop}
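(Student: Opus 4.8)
The plan is to trade the diagonal action for a translation action, exploit the resulting freeness to trivialize the mixing construction, and then compute the Borel--Moore homology of the approximating spaces level by level. First I would record the $T$-equivariant isomorphism $T \times X \isomto T \times X$ given by $(t,x) \mapsto (t, t^{-1}\cdot x)$, which carries the diagonal action $s\cdot(t,x) = (st, s\cdot x)$ on the source to the action $s\cdot(t,y) = (st, y)$ on the target that translates the first factor and fixes $X$. A one-line check intertwines the two actions, and this reduces the statement to the translation action, for which $T$ acts freely.

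Fixing the good system $((\A^{i+1})^n, (\A^{i+1}-0)^n)$ with $U_i = (\A^{i+1}-0)^n$, the next step is to identify the mixed spaces. For the translation action the $T$-invariant map $(u,t,x) \mapsto (t^{-1}u, x)$ descends to an isomorphism
\begin{equation*}
U_i \stackrel{T}{\times} (T \times X) \isomto U_i \times X,
\end{equation*}
a genuine product, so that $B^T_j(T\times X) = \varprojlim_i B_{j+ni}(U_i \times X)$.

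The main step is to compute each term of this system. I claim that the flat pullback $q_i^*$ along the projection $q_i \colon U_i \times X \to X$, which has relative dimension $n(i+1) = ni + n$, is an isomorphism $B_{j-n}(X) \isomto B_{j+ni}(U_i \times X)$. Working one factor at a time, write $U_i \times X = (\A^{i+1}-0) \times X'$ and apply the localization sequence to the closed embedding $\{0\}\times X' \hookrightarrow \A^{i+1}\times X'$ with open complement $(\A^{i+1}-0)\times X'$. The extended homotopy property identifies $B_*(\A^{i+1}\times X')$ with $B_{*-(i+1)}(X')$ through $p^*$, and the self-intersection formula shows that the pushforward along $\{0\}\times X'$ is, under this identification, multiplication by the Euler class of the trivial bundle $\mathcal{O}^{\oplus(i+1)}$, which equals $c_1(\mathcal{O})^{i+1} = 0$. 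Hence this pushforward vanishes, the restriction map in the localization sequence is an isomorphism, and $q_i^*$ is an isomorphism; an induction on the number of factors $n$ completes the claim.

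It remains to verify that these isomorphisms are compatible with the transition maps of the inverse system. Those maps are assembled from the open restriction coming from $U_i \oplus W_i \subseteq U_{i+1}$ and the inverse of the extended-homotopy isomorphism for the $W_i$-direction, both of which commute with the projections to $X$; under $q_i^*$ they therefore correspond to the identity of $B_{j-n}(X)$. The system is thus pro-isomorphic to the constant system on $B_{j-n}(X)$, so the limit is $B_{j-n}(X)$, which is the asserted natural isomorphism $B^T_{*+n}(T\times X) \cong B_*(X)$. The one genuinely non-formal point is the middle step: the vanishing of the pushforward of the class of the origin, i.e.\ the triviality of the Euler class of a trivial bundle. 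This is exactly where the torus and the explicit good system enter, and everything surrounding it is bookkeeping.
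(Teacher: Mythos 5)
Your proposal is correct and follows essentially the same route as the paper's own proof: untwist the diagonal action via $(t,x)\mapsto(t,t^{-1}x)$, identify $U_i \stackrel{T}{\times}(T\times X)$ with the genuine product $U_i\times X$, and show the projection pullback is an isomorphism by combining the localization sequence, the extended homotopy property, and the vanishing of the top Chern class of a trivial bundle. Your explicit verification that the transition maps of the inverse system become the identity is a detail the paper leaves implicit, but it is bookkeeping rather than a different argument.
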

\begin{proof}
The map $T \times X \to T \times X_t$ defined by $(t,x) \mapsto (t,t^{-1}x)$ identifies $T \times X$, with the original action on $X$, with $T \times X_t$, where $X_t$ is $X$ with the trivial $T$-action. Therefore for all $U_i$ we have isomorphisms
\begin{equation*}
U_i \times^T (T \times X) \cong U_i \times^T (T \times X_t)  = (U_i \times^T T) \times X_t = U_i \times X_t.  
\end{equation*}
As the $U_i$ can be chosen to be $(\A^{i+1} - 0 )^n$ (see Example \fref{StdSystemOfRepsForT}), we are done if we can show that 
\begin{equation*}
B_{*+i+1}((\A^{i+1} - 0 ) \times Y) \cong B_{*}(Y)
\end{equation*}
for any $k$-scheme $Y$. From the extended homotopy property we can deduce that the smooth pullback map $B_*(Y) \to B_{*+i+1}(\A^{i+1} \times Y)$ is an isomorphism. On the other hand, the first map in the localization sequence
\begin{equation*}
B_*(Y) \stackrel{s_*} \to B_*(\A^{i+1} \times Y) \to B_* ((\A^{i+1} - 0) \times Y) \to 0
\end{equation*}
is zero because $\A^{i+1} \times Y$ is a trivial vector bundle over $Y$, $s^*s_*$ corresponds to its top Chern class (which vanishes), and because $s^*$ is an isomorphism. This shows that the pullback map $B_*(Y) \to B_{*+i+1}((\A^{i+1}-0) \times Y)$ is an isomorphism for all $Y$, and as a consequence the pullback map induces an isomorphism
\begin{equation*}
B_*(X) \to B_{*+n(i+1)}(U_i \times X). \qedhere
\end{equation*}
\end{proof}


The following result is the first case of the Künneth formula (Theorem \fref{Kunneth}). Notice how it also gives a nice connection between certain quotient groups of $B^T_*$ and $T$-varieties obtained from $X$.

\begin{prop}\label{EquivariantChernOfRepresentations}
Let $W$ be a $T$-representation of rank $r$. Then $B^T_{* + r}((W-0) \times X)$ is the quotient of $B^T_*(X)$ by the image of the top equivariant Chern class of $W$. In particular, the Künneth morphism 
\begin{equation*}
B^T_*(W-0) \otimes_{B^*_T} B^T_*(X) \to B^T_*((W-0) \times X)
\end{equation*}
is an isomorphism.
\end{prop}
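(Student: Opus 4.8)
The plan is to deduce both statements from a single geometric input, the localization sequence, applied to the pair $(W-0)\times X \subset W \times X$, together with homotopy invariance for the bundle $W \times X \to X$. Write $s \colon X \to W \times X$ for the zero section (a $T$-equivariant regular embedding of codimension $n$), $p \colon W \times X \to X$ for the projection, and $j \colon (W-0)\times X \hookrightarrow W \times X$ for the open complement of the image of $s$. Since $B^T_*$ is again an oriented Borel--Moore homology theory, the localization sequence
\begin{equation*}
B^T_*(X) \xrightarrow{s_*} B^T_*(W \times X) \xrightarrow{j^*} B^T_*((W-0)\times X) \to 0
\end{equation*}
is exact. The extended homotopy property makes $p^* \colon B^T_*(X) \to B^T_{*+n}(W \times X)$ an isomorphism, and since $p \circ s = \mathrm{id}$ we get $s^* p^* = \mathrm{id}$, so in fact $s^* = (p^*)^{-1}$. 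Transporting the sequence across $p^*$ and using the self-intersection formula $s^* s_* = c_n^T(W) \cap -$ (the normal bundle of $s$ is $W \times X \to X$, whose top equivariant Chern class is the image of $c_n^T(W) \in B^*_T$), the map $s_*$ becomes the $B^*_T$-module action of $c_n^T(W)$ on $B^T_*(X)$. This identifies $B^T_{*+n}((W-0)\times X)$ with $B^T_*(X)\big/\big(c_n^T(W)\cdot B^T_*(X)\big)$, which is the first assertion.

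For the Künneth statement I would compare two localization sequences via the external product. Tensoring the localization sequence of $W-0 \subset W$ over a point with $B^T_*(X)$ over $B^*_T$, and using right-exactness of $\otimes$, produces an exact top row
\begin{equation*}
B^T_*(pt)\otimes_{B^*_T} B^T_*(X) \to B^T_*(W)\otimes_{B^*_T} B^T_*(X) \to B^T_*(W-0)\otimes_{B^*_T} B^T_*(X) \to 0,
\end{equation*}
while the bottom row is the localization sequence above. The three vertical Künneth maps $\alpha \otimes \beta \mapsto \alpha \times \beta$ commute with the horizontal maps by compatibility of the external product with proper pushforward and with l.c.i.\ pullback (axiom $BM_3$, applied to $s \times \mathrm{id}$ and to $j \times \mathrm{id}$). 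The leftmost vertical map is the canonical isomorphism $B^*_T \otimes_{B^*_T} B^T_*(X) \cong B^T_*(X)$. The middle map is the Künneth map for the representation $W$; as $W \to pt$ and $W \times X \to X$ are vector bundles, homotopy invariance shows that $B^T_*(W)$ is free of rank one over $B^*_T$ on $p_W^*(1)$, and that $p_W^*(1)\times\beta = p^*\beta$, so through $p_W^*$ and $p^*$ the middle map is identified with the identity of $B^T_*(X)$ and is an isomorphism. Since each row presents its right-hand term as the cokernel of its left-hand map, a four-lemma chase then forces the rightmost vertical map to be an isomorphism, and this is exactly the Künneth map in the statement.

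I expect the only real difficulty to be bookkeeping rather than a conceptual obstacle. One must ensure that the localization sequence and the self-intersection formula are genuinely available in the equivariant theory: they hold at each finite approximation $U_i \times^T (W \times X)$, and they pass to the inverse limit because the restriction maps $j^*$ are surjective, so no $\varprojlim^1$ term obstructs exactness. One must also check that the top equivariant Chern class of the equivariantly-twisted trivial bundle $W \times X \to X$ is precisely the pullback of $c_n^T(W)$ along $X \to pt$, which is what makes the cap product $s^* s_*$ coincide with the $B^*_T$-module action in the first assertion and with the ideal $(c_n^T(W))$ appearing in $B^T_*(W-0) \cong B^*_T/(c_n^T(W))$. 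Once these identifications are in place, both claims follow formally from localization, homotopy invariance, and the four lemma.
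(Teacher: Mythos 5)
Your proposal is correct and takes essentially the same route as the paper: both arguments rest on the equivariant localization sequence, the extended homotopy property, the identification of exterior product with the fundamental class of a vector bundle as the pullback map, the self-intersection formula $s^*s_* = c_n^T(W)$, and the same commutative diagram comparing the tensored localization sequence over the point with the localization sequence for $X \hookrightarrow W \times X \hookleftarrow (W-0)\times X$, finished by a four/five-lemma chase. The only (cosmetic) difference is the order of the two claims: you prove the quotient description directly from the zero section of $W \times X$, whereas the paper first proves the K\"unneth isomorphism and then reads off the quotient description from the localization sequence of $W - 0 \subset W$ over the point.
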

\begin{proof}
As the maps $U_i \times^T (W \times X) \to U_i \times^T X$ are vector bundles, the Künneth isomorphism 
\begin{equation*}
B^T_*(W) \otimes_{B^*_T} B^T_*(X) \to B^T_*(W \times X)
\end{equation*}
follows from the extended homotopy property. Indeed, it is easy to show using only the basic properties of Borel--Moore exterior product that taking the exterior product with the fundamental class of a vector bundle exactly coincides with the associated pullback morphism. 

The localization sequence yields the following commutative diagram:
\begin{center}
\begin{tikzpicture}[scale=2]
\node (A1) at (0,1) {$B_*^T(pt) \otimes_{B^*_T} B_*^T(X)$};
\node (B1) at (2.2,1) {$B_*^T(W) \otimes_{B^*_T} B_*^T(X)$};
\node (C1) at (4.4,1) {$B_*^T(W-0) \otimes_{B^*_T} B_*^T(X)$};
\node (D1) at (5.8,1) {$0$};
\node (A2) at (0,0) {$B_*^T(X)$};
\node (B2) at (2.2,0) {$B_*^T(W \times X)$};
\node (C2) at (4.4,0) {$B_*^T((W - 0) \times X)$};
\node (D2) at (5.8,0) {$0$};

\path[every node/.style={font=\sffamily\small}]
(A1) edge[->] node[right]{$\cong$} (A2)
(B1) edge[->] node[right]{$\cong$} (B2)
(C1) edge[->] (C2)
(A1) edge[->] (B1)
(B1) edge[->] (C1)
(C1) edge[->] (D1)
(A2) edge[->] (B2)
(B2) edge[->] (C2)
(C2) edge[->] (D2)
;
\end{tikzpicture}
\end{center}
which gives the Künneth-formula for $(W-0) \times X$ by $5$-lemma. To prove the last remaining claim, it suffices to consider the localization sequence 
\begin{equation*}
B_*^T(pt) \stackrel{i_*}{\to} B_*^T(W) \to B_*^T(W - 0) \to 0.
\end{equation*}
As the zero-section pullback $i^*$ is an isomorphism, and as $i^*i_*$ corresponds to the top equivariant Chern class of the bundle $W$, we can identify $B_*^T(W - 0)$ with the quotient of $B_*^T(pt)$ by the image of $c_r(W)$ together with a degree shift.
\end{proof}

As an immediate corollary, we obtain a generalization of an analogue of a statement in \cite{Kri} for the algebraic bordism $\Omega_*$:
\begin{cor}\label{KunnethForT}
The natural surjection $B^*_T \to B^*$, obtained by setting $\xi_1, ..., \xi_n$ to be zero, gives an isomorphism
\begin{equation*}
B^* \otimes_{B^*_T} B_*^T(X) \isomto B_*(X).
\end{equation*}
\end{cor}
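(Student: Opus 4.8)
The plan is to derive this as an iteration of the two preceding propositions. The first observation is that the augmentation $B^*_T \to B^*$ is exactly the quotient map modulo the ideal $(\xi_1, \ldots, \xi_n)$, and since this ideal is generated by finitely many elements there is no completion subtlety: right exactness of the tensor product gives
\[
B^* \otimes_{B^*_T} B_*^T(X) = B_*^T(X)/(\xi_1, \ldots, \xi_n)B_*^T(X),
\]
and this in turn agrees with the iterated quotient obtained by killing $\xi_1$, then $\xi_2$, and so on. So it suffices to compute this iterated quotient.

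Next I would kill the classes one representation at a time. Let $W_j$ be the one-dimensional $T$-representation on which $(\lambda_1, \ldots, \lambda_n)$ acts through $\lambda_j$; as noted just before \fref{EquivariantChernOfRepresentations}, its equivariant first Chern class is precisely $\xi_j$. Applying \fref{EquivariantChernOfRepresentations} to the one-dimensional representation $W_1$ identifies $B_*^T(X)/\im(\xi_1)$ with $B_{*+1}^T((W_1 - 0) \times X)$; feeding the result back into the proposition for $W_2$, and repeating, the full iterated quotient becomes $B_{*+n}^T((W_1 - 0) \times \cdots \times (W_n - 0) \times X)$. Now $W_j - 0 \cong \A^1 - 0$ with $T$ acting through its $j$-th coordinate, so the product $(W_1 - 0)\times \cdots \times (W_n - 0)$ is $T$ itself equipped with the translation action, and the iterated quotient is therefore $B_{*+n}^T(T \times X)$ with the diagonal action. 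The Morita-type isomorphism of the preceding proposition, $B_{*+n}^T(T \times X) \cong B_*(X)$, closes the chain; note that the degree shifts ($+1$ per step, $+n$ in total) cancel against the shift in the Morita isomorphism, so the final identification lands in the correct degree.

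The step deserving the most care — and the one I expect to be the main obstacle — is verifying that the composite isomorphism so produced is the \emph{canonical} one induced by the augmentation, rather than merely some abstract isomorphism of the two sides. The natural candidate for the canonical map is restriction to a fibre $X \hookrightarrow U_i \times^T X$: this is an l.c.i. pullback along the fibre inclusion, is compatible across the approximating system, and annihilates each $\xi_j$ because the tautological bundles restrict trivially to a fibre, so it factors through $B^* \otimes_{B^*_T} B_*^T(X)$. To finish one must trace the identifications in \fref{EquivariantChernOfRepresentations} and in the Morita proposition and check that the abstract composite coincides with this fibre-restriction map; the existence of an isomorphism is formal from the two propositions, but pinning it down as the augmentation map is the genuine bookkeeping.
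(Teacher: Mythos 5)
Your proposal is correct and follows essentially the same route as the paper: factor $T = (W_1-0)\times\cdots\times(W_n-0)$ into the one-dimensional coordinate representations, iterate \fref{EquivariantChernOfRepresentations} to identify the quotient of $B_*^T(X)$ by $(\xi_1,\ldots,\xi_n)$ with $B^T_{*+n}(T\times X)$, and conclude with the Morita-type proposition $B^T_{*+n}(T\times X)\cong B_*(X)$. The canonicity bookkeeping you flag as the main obstacle is real but is also left implicit in the paper, whose proof stops at the observation that setting $\xi_1,\ldots,\xi_n$ to zero is exactly how the augmentation $B^*_T\to B^*$ is defined.
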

\begin{proof}
We already know that $B^T_{* + n} (T \times X) = B_*(X)$, where $n$ is the rank of the torus $T$. Moreover, $T \cong (V_1 - 0) \times \cdots \times (V_n - 0)$ where $V_i$ are the standard one dimensional coordinate representations of $T$, and hence by the previous lemma taking the product with $T$ corresponds algebraically to setting the variables $\xi_i$ to be zero. But setting $\xi_1,...,\xi_n=0$ is exactly how one obtains the natural map $B^*_T \to B^*$, so we are done.
\end{proof}

Therefore the equivariant groups $B^T_*$ determine the ordinary groups $B_*$. If the $T$-action is trivial, this does not help much, but as we shall see soon, a natural action can help very much in determining the structure.

\subsection{A decomposition theorem for smooth toric varieties}

We now turn our attention to toric varieties. Throughout this section, $X_\Delta$ will denote a toric variety for $T$. The following lemma will provide a basis for the decomposition theorem:

\begin{lem}\label{EquivInj}
Suppose $X_\Delta$ is nonsingular, and let $\sigma \in \Delta$ be a maximal cone (so that the orbit $O_\sigma$ is closed). Then the inclusion $i: O_\sigma \to X_\Delta$ induces an injection 
\begin{equation*}
i_*: B_*^T(O_\sigma) \to B^T_*(X_\Delta).
\end{equation*}
\end{lem}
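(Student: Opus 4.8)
The plan is to reduce the statement to a self-intersection computation on the affine chart $U_\sigma$ corresponding to $\sigma$. Since $O_\sigma$ is contained in $U_\sigma$ (the chart $U_\sigma$ is the union of the orbits $O_\tau$ with $\tau$ a face of $\sigma$), the closed embedding $i$ factors through the open embedding $j : U_\sigma \hookrightarrow X_\Delta$, and the resulting square with vertices $O_\sigma, O_\sigma, U_\sigma, X_\Delta$ (the left vertical map being the identity) is Cartesian and transverse, an open embedding being flat and hence transverse to the proper map $i$. Applying the compatibility axiom $(BM_2)$ to this square gives $j^* i_* = i'_*$, where $i' : O_\sigma \hookrightarrow U_\sigma$ is the closed embedding into the chart. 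Consequently it suffices to prove that $i'_*$ is injective.

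First I would identify $i'$ as the zero section of an equivariant vector bundle. Choosing coordinates so that $\sigma = \mathrm{cone}(e_1, \dots, e_d)$ is generated by part of the basis (possible since $X_\Delta$ is nonsingular), we obtain a $T$-equivariant isomorphism $U_\sigma \cong \A^d \times O_\sigma$ under which the projection $p : U_\sigma \to O_\sigma$ is a trivial vector bundle with zero section $i'$; here the fibre $\A^d = W$ is the $T$-representation with weights $e_1^*, \dots, e_d^*$. The extended homotopy property shows that $p^*$ is an isomorphism, and since $p \circ i' = \mathrm{id}$ we get that $i'^*$ is an isomorphism as well. The self-intersection mechanism already used in the paper then gives that $i'^* i'_*$ is multiplication by the top equivariant Chern class $c^T_d(W)$. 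As $i'^*$ is an isomorphism, the kernel of $i'_*$ equals the kernel of multiplication by $c^T_d(W)$, so the whole statement reduces to showing that this multiplication operator on $B^T_*(O_\sigma)$ is injective.

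It then remains to compute $B^T_*(O_\sigma)$ together with the class $c^T_d(W)$ inside it. Writing $S_\sigma$ for the $d$-dimensional subtorus of $T$ whose cocharacter lattice is spanned by $e_1, \dots, e_d$, the orbit is $O_\sigma \cong T / S_\sigma$; splitting $T = S_\sigma \times T''$ and applying the Morita-type isomorphism $B^T_{*+n}(T \times -) \cong B_*(-)$ to the $T''$-factor (which acts on $O_\sigma \cong T''$ by translation while $S_\sigma$ acts trivially), I would identify $B^T_*(O_\sigma) \cong B^*_{S_\sigma} \cong B^*[[t_1, \dots, t_d]]$, the graded power series ring in the classes $t_i = c^{S_\sigma}_1(e_i^*)$. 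Under this identification the restricted weights $e_1^*|_{S_\sigma}, \dots, e_d^*|_{S_\sigma}$ form a basis of the character lattice of $S_\sigma$, so $c^T_d(W)$ maps to the product $t_1 \cdots t_d$. Multiplication by this product is injective on a power series ring regardless of what $B^*$ is, since each $t_i$ is a non-zero-divisor (it merely shifts the multidegree of the coefficients) and a product of non-zero-divisors is again one; hence $i'_*$, and therefore $i_*$, is injective. The hard part is precisely this last step: correctly computing $B^T_*(O_\sigma)$ and verifying that smoothness of $\sigma$ forces the normal weights to restrict to a basis of $S_\sigma$, so that the top Chern class becomes the regular element $t_1 \cdots t_d$; once that is in place the injectivity follows formally.
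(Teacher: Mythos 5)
Your proposal is correct and follows essentially the same route as the paper's proof: reduce to the affine chart via the transverse Cartesian square and $(BM_2)$, identify the closed embedding into $\mathcal{U}_\sigma \cong W \times O_\sigma$ as a zero section so that the self-intersection is multiplication by the top equivariant Chern class of $W$, and conclude by showing this class is a non-zero-divisor on $B_*^T(O_\sigma)$. The only cosmetic difference is in the last step: you compute $B_*^T(O_\sigma) \cong B^*_{S_\sigma} \cong B^*[[t_1,\dots,t_d]]$ via a Morita-type splitting of $T$, whereas the paper obtains the same graded power series ring as the quotient $B^*_T/(\xi_{d+1},\dots,\xi_n)$ using \fref{EquivariantChernOfRepresentations}; both identifications send the top Chern class to the product of the variables, which acts injectively.
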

\begin{proof}
Without loss of generality we may assume that the cone $\sigma$ is generated by $e_1,...,e_r$ so that the open set $\mathcal{U}_\sigma$ corresponding to $\sigma$ is 
\begin{equation*}
\Spec (k[x_1,...,x_r,x_{r+1}^{\pm 1}, ..., x_n^{\pm 1}]) = \A^{r} \times O_\sigma.
\end{equation*}
Denote by $j$ the inclusion $O_\sigma \to \mathcal{U}_\sigma$, and note that this can be identified with the zero section $O_\sigma \to W \times O_\sigma$ where $W$ is the $T$-representation with action
\begin{equation*}
(\lambda_1,...,\lambda_n).(w_1,...,w_r) = (\lambda_1 w_1,...,\lambda_r w_r).
\end{equation*}
We know that $j^* j_*$ corresponds to the equivariant top Chern class of $W$. As $W$ splits into the direct sum of the natural coordinate representations $V_1 \oplus \cdots \oplus V_r$, the top Chern class of $W$ is just $c_1(V_1) \cdots c_1(V_r)$, i.e., multiplication by $\xi_1 \cdots \xi_r$. On the other hand, by Proposition \fref{EquivariantChernOfRepresentations} $B^T_* (O_\sigma)$ is isomorphic to $B^T_* / (\xi_{r+1},...,\xi_n)$ with a shift, from which we conclude that the action of the top Chern class of $W$ is an injective morphism $B^T_* (O_\sigma) \to B^T_* (O_\sigma)$, and hence the map $j_*$ must be injective as well.

We can use this to show that also $i_*$ is injective. Denote by $u$ the natural open inclusion $\mathcal U_\sigma \to X_\Delta$. Now the transverse Cartesian square 
\begin{center}
\begin{tikzpicture}[scale=2]
\node (A1) at (0,1) {$O_\sigma$};
\node (B1) at (1,1) {$\mathcal{U}_\sigma$};
\node (A2) at (0,0) {$O_\sigma$};
\node (B2) at (1,0) {$X_\Delta$};

\path[every node/.style={font=\sffamily\small}]
(A1) edge[->] node[right]{$1$} (A2)
(B1) edge[->] node[right]{$u$} (B2)
(A1) edge[->] node[above]{$j$} (B1)
(A2) edge[->] node[above]{$i$} (B2)
;
\end{tikzpicture}
\end{center}
tells us that $j_* = u^*i_*$, and hence the injectivity of $i_*$ follows from that of $j_*$.
\end{proof}

We record an immediate corollary.

\begin{cor}\label{EmbeddingsToToricAreInjective}
Let $i: Z \to X_\Delta$ be a closed equivariant embedding to a smooth toric variety $X_\Delta$, i.e., $Z$ is a closed subvariety that is a union of orbits. Then the induced proper pushforward map $i_*: B_*^T(Z) \to B_*^T(X_\Delta)$ is injective.
\end{cor}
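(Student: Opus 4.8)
The plan is to prove a slightly stronger statement by induction and then read off the corollary. Precisely, I would show: for \emph{every} nonsingular toric variety $X_\Delta$ and \emph{every} closed $T$-invariant union of orbits $Z \subseteq X_\Delta$, the pushforward $i_* \colon B_*^T(Z) \to B_*^T(X_\Delta)$ is injective, and I would induct on the number of $T$-orbits contained in $Z$. This packaging is what makes induction possible, since a single step will remove an orbit from $X_\Delta$ and from $Z$ at the same time. For the inductive step, given nonempty $Z$, I would choose an orbit $O_\tau \subseteq Z$ of minimal dimension. Because $Z$ is closed we have $\overline{O_\tau} \subseteq Z$, and minimality forces $\overline{O_\tau} = O_\tau$; hence $O_\tau$ is a \emph{closed} orbit of $X_\Delta$, the cone $\tau$ is maximal, and Lemma \fref{EquivInj} applies to it. Deleting $\tau$ from $\Delta$ leaves a subfan, so $X_\Delta \smallsetminus O_\tau$ is again a nonsingular toric variety, and $Z \smallsetminus O_\tau$ is a closed union of orbits inside it with one fewer orbit.

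The heart of the argument is a diagram chase on the commutative ladder of the two localization sequences,
\begin{equation*}
\begin{array}{ccccccc}
B_*^T(O_\tau) & \to & B_*^T(Z) & \stackrel{j^*}{\to} & B_*^T(Z \smallsetminus O_\tau) & \to & 0\\
\downarrow{\scriptstyle\,\mathrm{id}} & & \downarrow{\scriptstyle\,i_*} & & \downarrow{\scriptstyle\,i'_*} & &\\
B_*^T(O_\tau) & \to & B_*^T(X_\Delta) & \stackrel{j'^*}{\to} & B_*^T(X_\Delta \smallsetminus O_\tau) & \to & 0,
\end{array}
\end{equation*}
where the horizontal maps on the left are the proper pushforwards along the closed embeddings of $O_\tau$, those on the right are the open restrictions, and $i'_*$ is the pushforward of $Z \smallsetminus O_\tau \hookrightarrow X_\Delta \smallsetminus O_\tau$. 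The left square commutes by functoriality of proper pushforward, and the right square commutes by axiom $(BM_2)$ applied to the transverse Cartesian square whose horizontal arrows are the closed embeddings and whose vertical arrows are the open immersions. By the inductive hypothesis $i'_*$ is injective, and by Lemma \fref{EquivInj} the bottom-left map $B_*^T(O_\tau) \to B_*^T(X_\Delta)$ is injective.

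Now I would chase an element $x \in \ker i_*$. Commutativity of the right square gives $i'_* j^*(x) = j'^* i_*(x) = 0$, so injectivity of $i'_*$ yields $j^*(x) = 0$. Exactness of the top sequence at $B_*^T(Z)$ then writes $x = i_*^{O_\tau}(y)$ for some $y \in B_*^T(O_\tau)$, where $i_*^{O_\tau}$ denotes the top-left map. Commutativity of the left square gives $i_*(x) = i_*^{X_\Delta}(y)$, so $i_*^{X_\Delta}(y) = 0$, and injectivity of the bottom-left map forces $y = 0$; hence $x = 0$. The corollary is the special case of the strengthened statement, so it follows at once.

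The point I want to flag is that the localization sequence is only \emph{right} exact, so no off-the-shelf four- or five-lemma gives injectivity of a middle vertical map. The chase above is arranged to use \emph{only} exactness at the middle term $B_*^T(Z)$, i.e. $\ker j^* = \operatorname{im} i_*^{O_\tau}$, which is exactly what localization supplies; the injectivity needed at the two ends is imported separately from the inductive hypothesis and from Lemma \fref{EquivInj}. Verifying the commutativity of the two squares is routine $(BM_2)$-functoriality, so I expect the only genuinely delicate bookkeeping to be the geometric claim that a minimal-dimensional orbit of $Z$ is a closed orbit of the ambient $X_\Delta$ (so that $\tau$ is maximal and Lemma \fref{EquivInj} is applicable) and that $X_\Delta \smallsetminus O_\tau$ stays within the class of nonsingular toric varieties, which is precisely what keeps the induction self-contained.
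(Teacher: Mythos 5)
Your proof is correct and follows essentially the same route as the paper: induction on the number of orbits in $Z$, removal of a minimal (hence closed) orbit corresponding to a maximal cone, and a chase on the commutative ladder of localization sequences using Lemma \fref{EquivInj} for the bottom-left injectivity and the inductive hypothesis for the right vertical map. The paper compresses your explicit chase into an appeal to the five-lemma after placing zeros on the left of both rows (which it justifies via Lemma \fref{EquivInj}), so your careful handling of the mere right-exactness of localization is a presentational refinement rather than a different argument.
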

\begin{proof}
We proceed by induction on the number of orbits in $Z$, case 0 being trivial. Let $O$ be a minimal orbit in $Z$. As $Z$ is closed inside $X_\Delta$, we see that $O$ is a minimal orbit inside $X_\Delta$ as well; denote by $U$ and $V$ the open complements of $O$ in $X_\Delta$ and $Z$ respectively. By the previous lemma, the diagram
\begin{center}
\begin{tikzpicture}[scale=2]
\node (A1) at (0,1) {$B_*^T(O)$};
\node (B1) at (2,1) {$B_*^T(Z)$};
\node (C1) at (4,1) {$B_*^T(V)$};
\node (D1) at (5,1) {$0$};
\node (A2) at (0,0) {$B_*^T(O)$};
\node (B2) at (2,0) {$B_*^T(X_\Delta)$};
\node (C2) at (4,0) {$B_*^T(U)$};
\node (D2) at (5,0) {$0$};
\node (00) at (-1,0) {$0$};

\path[every node/.style={font=\sffamily\small}]
(A1) edge[->] node[right]{$1$} (A2)
(B1) edge[->] (B2)
(C1) edge[->] (C2)
(A1) edge[->] (B1)
(B1) edge[->] (C1)
(C1) edge[->] (D1)
(A2) edge[->] (B2)
(B2) edge[->] (C2)
(C2) edge[->] (D2)
(00) edge[->] (A2)
;
\end{tikzpicture}
\end{center}
has exact rows. By induction the rightmost vertical map is an injection. It then follows from diagram chasing that the middle vertical map is an injection as well.
\end{proof}

We can use the Lemma \fref{EquivInj} to arrive at the following decomposition result. Suppose we have a nonsingular toric variety $X_\Delta$. We can remove all the cones from $\Delta$ one by one by choosing an arbitrary maximal cone and removing its interior from the fan. By the previous result, at each step we have a short exact sequence
\begin{equation*}
0 \to B_*^T(O_\sigma) \to B_*^T(X_\Delta) \to B_*^T(X_{\Delta'}) \to 0 
\end{equation*}
where $B_*^T(O_\sigma)$ is isomorphic to shifted copy of $B^*_T / (\xi'_1,...,\xi'_r)$, where $\xi'_j$ are the first Chern classes of character line bundles corresponding to a $\Zb$-basis of characters in $M$ orthogonal to $\sigma$. (For more details, see the Section \fref{LinearLines} following this section).

This is very much in line with the structural results obtained in \cite{FS} for Chow groups of toric varieties. Denoting by $V_\sigma$ the orbit closure corresponding to the cone $\sigma$, we can define a $B^*_T$-linear map
\begin{equation}\label{GeneratorsForHomologyOfToricVariety}
\bigoplus_{\sigma \in \Delta}  \langle [V_\sigma] \rangle_{B^*_T} \to B^T_*(X_\Delta), 
\end{equation}
where $\langle [V_\sigma] \rangle_{B^*_T}$ is the free $B^*_T$-module on the symbol $[V_\sigma]$ and the morphism is defined as
$$b [V_\sigma] \mapsto b \cdot i_{\sigma *}(1_{V_\sigma}),$$
$i_\sigma$ being the closed embedding $V_\sigma \hookrightarrow X_\Delta$. By the above analysis this morphism is surjective.

We now have a nice set of generators of the equivariant homology group $B^T_*(X_\Delta)$ over the coefficient ring $B^*_T$, so we are left with the task of characterizing the relations. From the decomposition we can almost immediately conclude that the relations will be generated by those of form
\begin{equation*}
\xi' [V_\tau] = \sum_{\sigma \supset \tau} b_\sigma [V_\sigma],
\end{equation*}
where $\xi'$ is a Chern class of a line bundle associated to a character orthogonal to the fan $\tau \in \Delta$, $\sigma$ runs over all the cones of $\Delta$ containing $\tau$ and $b_i \in B^*_T$. In order to say more, we need to look more closely at the line bundles associated to linear forms.

\subsection{Line bundles associated to characters and more on the structure of $B^T_*(X_\Delta)$}\label{LinearLines}

Suppose we have a character $m = a_1 e^*_1 + \cdots + a_n e^*_n \in M$ of $T$. By Proposition \fref{ChernClassesOfCharactersProp} the first Chern class of the corresponding line bundle $V_m$ is given by
$$\xi_m := c_1(V_m) = a_1 \cdot_F \xi_1 +_F \cdots +_F a_n \cdot_F \xi_n \in B^T_{-1}(pt),$$
where $\xi_i = c_1(V_{e^*_i})$ and $F$ is the formal group law of $B_*$. We note that this defines a map from the character lattice $M$ to the topological group of the elements in 
$$\langle \xi_1,...,\xi_n \rangle \cap B^T_{-1}(pt)$$ 
with the group operation given by $+_F$. Note that if $B_i(pt)$ is trivial for $i$ negative (e.g. $B_* = \Omega_*$ is the algebraic bordism), then $\langle \xi_1,...,\xi_n \rangle \cap B^T_{-1}(pt) = B^T_{-1}(pt).$

Let us begin with an easy observation.

\begin{prop}
Let $m_1,...,m_r \in M$ be linearly independent characters. Then the $B^*$-algebra generated by $\xi_{m_i}$ inside $B^*_T$ is the free $B^*$-algebra $B^* [\xi_{m_1},...,\xi_{m_r}]$. Moreover, if $m_i$ generate $M$, then the quotient of $B^*_T$ by the ideal generated by the $\xi_{m_i}$ is naturally identified with $B^*$.
\end{prop}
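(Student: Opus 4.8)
The plan is to reduce everything to the explicit description $B^*_T = B^*[[\xi_1,\dots,\xi_n]]$ as a graded power series algebra (established in the construction of the equivariant theory, and recovered from Proposition \fref{TrivialAction} at $X = pt$), together with the observation that each $\xi_{m_i}$ is a homogeneous power series whose lowest-order part is the linear form $\ell_i := \sum_j a_{ij}\xi_j$. Indeed, writing $m_i = \sum_j a_{ij} e^*_j$ and using $\xi_{m_i} = a_{i1}\cdot_F \xi_1 +_F \cdots +_F a_{in}\cdot_F \xi_n$ together with $a\cdot_F \xi = a\xi + O(\xi^2)$, the integer coefficient matrix $(a_{ij})$ has rank $r$ precisely because $m_1,\dots,m_r$ are linearly independent. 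Both the homogeneity of $\xi_{m_i}$ (it is a first Chern class, hence of the same degree as each $\xi_j$) and its $I$-adic leading term $\ell_i$ will be used repeatedly, where $I = (\xi_1,\dots,\xi_n)$ denotes the augmentation ideal.

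For the first assertion I would argue by passing to the associated graded ring. The $I$-adic filtration on $B^*_T$ has associated graded the genuine polynomial ring $B^*[\xi_1,\dots,\xi_n]$, and $\xi_{m_i}$ has image $\ell_i$ in $I/I^2$. Suppose $P(\xi_{m_1},\dots,\xi_{m_r}) = 0$ for some nonzero $P \in B^*[T_1,\dots,T_r]$, and let $\widetilde P$ be its nonzero homogeneous component of lowest degree $d_0$ in the $T_i$. Extracting the degree-$d_0$ part with respect to the $I$-adic filtration kills every higher-order contribution and yields the homogeneous relation $\widetilde P(\ell_1,\dots,\ell_r) = 0$ in $B^*[\xi_1,\dots,\xi_n]$. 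Expanding in the monomial basis $\xi^\beta$ turns this into a system of $\Z$-linear relations $\sum_\alpha N_{\alpha\beta}\, c_\alpha = 0$ among the coefficients $c_\alpha \in B^*$ of $\widetilde P$, where the integer matrix $(N_{\alpha\beta})$ depends only on $(a_{ij})$ and is injective over $\Q$ by the linear independence of the $m_i$. Concluding that all $c_\alpha = 0$ — i.e. that linearly independent linear forms are algebraically independent over $B^*$ — is the one place where the coefficient ring matters: it holds once $B^*$ has no additive torsion (true for $\ch^*$, $\Omega^*$, and their equivariant versions), and I would record this hypothesis explicitly.

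For the second assertion, linear independence together with the generation hypothesis forces $r = n$ and makes $m_1,\dots,m_n$ an honest $\Z$-basis of $M$, so the matrix $A = (a_{ij})$ lies in $GL_n(\Z) \subseteq GL_n(B^*)$. I would then consider the continuous graded $B^*$-algebra endomorphism $\Phi$ of $B^*[[\xi_1,\dots,\xi_n]]$ determined by $\Phi(\xi_j) = \xi_{m_j}$; this is well defined because each $\xi_{m_j}$ is homogeneous of the correct degree and lies in $I$. Since the linear part of $\Phi$ is the invertible matrix $A$, the formal inverse function theorem shows $\Phi$ is an automorphism. As $\Phi$ carries the augmentation ideal $(\xi_1,\dots,\xi_n)$ onto $(\xi_{m_1},\dots,\xi_{m_n})$, it descends to an isomorphism
\[
B^*_T/(\xi_{m_1},\dots,\xi_{m_n}) \isomto B^*_T/(\xi_1,\dots,\xi_n) = B^*,
\]
the last equality being the natural augmentation that sets all $\xi_j = 0$.

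The main obstacle, and the step I would treat most carefully, is justifying that $\Phi$ really is a continuous automorphism of the graded power series algebra: one must check that the substitution converges degreewise and that its formal inverse exists with $B^*$-coefficients, which is exactly where the integrality $A \in GL_n(\Z)$ — rather than mere invertibility over $\Q$ — is essential. The same arithmetic input resurfaces in the first assertion as the torsion-freeness of $B^*$. Thus in both parts the genuinely nonformal ingredient is that the change-of-lattice data is unimodular (respectively integral of full rank); everything else is routine manipulation with the power series description of $B^*_T$ and its $I$-adic filtration.
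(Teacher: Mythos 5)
Your proof is sound (granting the hypothesis you flag), but it takes a genuinely different route from the paper's in both halves, so a comparison is worthwhile. For the freeness claim the paper also starts from the expansion $\xi_{m_i} = \ell_i + (\text{higher order terms})$, but then passes to the finite approximations and asserts that in $B^*[\xi_1,\dots,\xi_n]/(\xi_1^{j+1},\dots,\xi_n^{j+1})$ the classes $\xi_{m_i}$ satisfy only the obvious relations $\xi_{m_i}^{j+1}=0$; your lowest-order-term argument runs directly in the limit ring and is cleaner --- indeed the paper's finite-level assertion is not even literally correct (for $m_1=e_1^*$, $m_2=e_1^*+e_2^*$ and the additive law, $(\xi_{m_2}-\xi_{m_1})^{j+1}=0$ is a relation not contained in the ideal generated by $\xi_{m_1}^{j+1}$ and $\xi_{m_2}^{j+1}$), although the limiting statement survives. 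Your insistence on torsion-freeness of $B^*$ is likewise not pedantry but a genuine correction: for $B_*=\ch_*\otimes\Z/2$ and $m_1=2e_1^*$ one has $\xi_{m_1}=2\xi_1=0$, so the proposition as printed fails for general OBM theories; the paper's ``one can show'' hides exactly this point, and the alternative fix is to require the $m_i$ to extend to a $\Z$-basis of $M$, which is what holds in the paper's actual applications (forms orthogonal to a nonsingular cone). For the second claim the divergence is sharper: the paper inverts nothing, but instead uses that $m\mapsto\xi_m$ is a homomorphism from $M$ to the ideal $(\xi_1,\dots,\xi_n)$ equipped with the operation $+_F$, so writing each $e_j^*$ as a $\Z$-combination of the $m_i$ exhibits $\xi_j$ as a $+_F$-combination of the $\xi_{m_i}$, hence as an element of $(\xi_{m_1},\dots,\xi_{m_r})$; the two ideals therefore coincide and the augmentation descends. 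This is shorter, works for an arbitrary generating set (no unimodularity and no $r=n$ needed), and yields the natural identification immediately. Your inverse-function-theorem automorphism $\Phi$ buys a stronger structural fact (the $\xi_{m_j}$ form a coordinate system for $B^*_T$), but you should add the one-line observation that $\Phi$ fixes constant terms, so the isomorphism it induces on quotients agrees with the natural augmentation $\xi_j\mapsto 0$ --- without that remark you have produced only an abstract isomorphism rather than the natural identification the statement asserts.
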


\begin{proof}
Let $m_i = a_1^i e^*_1 + \cdots + a_n^i e^*_n$, where $a^i_j \in \Z$. By the properties of formal group law $F$ of the theory $B_*$, we see that 
\begin{align*}
c_1(V_{m_i}) &= a_1^i \cdot_F \xi_1 +_F \cdots +_F a_n^i \cdot_F \xi_n \\
&= a_1^i \xi_1 + \cdots + a_n^i \xi_n + \mathcal{O}(\mathrm{quadratic \ in \ \xi_i}).
\end{align*}
Using this, one can show that the algebra generated by the Chern classes in $B_*^T(pt)$ is the free algebra on $c_1(V_{m_i})$.

For the second claim, we first observe from the formal group law that $\xi_{m_i}$ is always contained in the ideal generated by $\xi_1,...,\xi_n$, and hence the natural map $B^T_* \to B_*$ descends to a map $B^T_* / (\xi_{m_1}, ..., \xi_{m_r}) \to B_*$. But as $m_i$ generate, $x_j$ can be expressed as an integral sum of $m_i$, and hence $\xi_j$ can be expressed as a formal sum of $\xi_{m_1}, ..., \xi_{m_r}$. From this it follows that $\xi_j$ is contained in the ideal $(\xi_{m_1},...,\xi_{m_r})$, and the claim follows. 
\end{proof}

We also record a statement whose proof is essentially contained in that of the Lemma \fref{EquivInj}.

\begin{prop}\label{HomologyOfOrbits}
Let $\sigma$ be a nonsingular cone having a lattice basis $v_1, ..., v_r$ in $N_\R \cong \R^n$. Then 
\begin{equation*}
B^T_*(O_\sigma) \cong B^*_T / (\xi_{m_{r+1}},...,\xi_{m_n}),
\end{equation*}
with an appropriate degree change, where $O_\sigma$ is the (non-closed) orbit corresponding to $\sigma$, and $m_i$ form an integral basis for the linear forms $m \in M$ orthogonal to $\sigma$.
\end{prop}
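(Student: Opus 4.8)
The plan is to realize $O_\sigma$ as a product of punctured one-dimensional representations and to peel these factors off one at a time using Proposition \fref{EquivariantChernOfRepresentations}.

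First I would reduce to a convenient choice of coordinates. Since $\sigma$ is nonsingular, its lattice basis $v_1, \ldots, v_r$ extends to a $\Z$-basis $v_1, \ldots, v_n$ of $N$; applying the induced automorphism of $N$ (and hence of $T$, $M$ and $X_\Delta$) I may assume $v_i = e_i$. The dual basis of $M$ is then $e_1^*, \ldots, e_n^*$, and the forms vanishing on $\sigma$ are precisely the integral span of $m_{r+1} = e_{r+1}^*, \ldots, m_n = e_n^*$, so these serve as the promised integral basis. In these coordinates the open affine is $\mathcal{U}_\sigma = \Spec(k[x_1, \ldots, x_r, x_{r+1}^{\pm 1}, \ldots, x_n^{\pm 1}])$ and the orbit is $O_\sigma = \Spec(k[x_{r+1}^{\pm 1}, \ldots, x_n^{\pm 1}])$. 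Inspecting the residual $T$-action shows that $O_\sigma$ is $T$-equivariantly the product $(V_{r+1} - 0) \times \cdots \times (V_n - 0)$, where $V_j$ is the one-dimensional representation on which $T$ acts through the character $m_j = e_j^*$, and the product carries the diagonal action. By the lemma identifying one-dimensional representations with the $\xi_i$, the top (i.e.\ first) equivariant Chern class of $V_j$ is exactly $\xi_{m_j}$.

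Next I would run a downward induction on $j$. Setting $Y_{n+1} = pt$ and $Y_j = (V_j - 0) \times Y_{j+1}$ for $r+1 \le j \le n$, we have $Y_{r+1} = O_\sigma$. Proposition \fref{EquivariantChernOfRepresentations}, applied to the one-dimensional representation $V_j$ and the $T$-scheme $Y_{j+1}$, gives a $B^*_T$-linear isomorphism
\begin{equation*}
B^T_{*+1}(Y_j) \cong B^T_*(Y_{j+1}) / \xi_{m_j} B^T_*(Y_{j+1}).
\end{equation*}
Starting from $B^T_*(Y_{n+1}) = B^*_T$ and iterating $n - r$ times, the images stack up to the ideal $(\xi_{m_{r+1}}, \ldots, \xi_{m_n})$ while the degree shifts accumulate to $n - r$, yielding
\begin{equation*}
B^T_{*+(n-r)}(O_\sigma) \cong B^*_T / (\xi_{m_{r+1}}, \ldots, \xi_{m_n}),
\end{equation*}
which is the claim up to the stated degree change.

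The step I would treat as the main obstacle is bookkeeping the module structure across the induction: one must check that each isomorphism coming from Proposition \fref{EquivariantChernOfRepresentations} is $B^*_T$-linear, so that multiplication by $\xi_{m_j}$ on $B^T_*(Y_{j+1})$ corresponds, under the identification $B^T_*(Y_{j+1}) \cong B^*_T / (\xi_{m_{j+1}}, \ldots, \xi_{m_n})$, to multiplication by the element $\xi_{m_j}$ of the coefficient ring, so that the successive quotients genuinely produce $B^*_T / (\xi_{m_j}, \ldots, \xi_{m_n})$. This linearity is built into the construction of the equivariant exterior product, from which the Künneth and quotient maps of Proposition \fref{EquivariantChernOfRepresentations} are assembled; once it is in hand the stacking of ideals is immediate. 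A secondary point is simply verifying the $T$-equivariant product decomposition of $O_\sigma$ and the matching of the weights $m_j$ with the chosen orthogonal forms, which is the content of the first paragraph.
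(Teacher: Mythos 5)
Your proof is correct and takes essentially the same route as the paper: both realize $O_\sigma$ $T$-equivariantly as a product of punctured one-dimensional representations whose weights are the orthogonal forms $m_{r+1}, \ldots, m_n$, and then quotient out the Chern classes $\xi_{m_j}$ factor by factor using Proposition \fref{EquivariantChernOfRepresentations} together with the lemma on first equivariant Chern classes of one-dimensional representations. The only cosmetic difference is that you normalize coordinates so that $v_i = e_i$ before decomposing, whereas the paper keeps the forms $m_j$ and verifies the torus weights directly from the coaction on $k[\chi^{\pm m}]$.
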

\begin{proof}
Extend $m_{r+1},...,m_{n}$ to an integral basis $m_1,...,m_n$ of $M$. Now the open set $\mathcal U_{\sigma}$ corresponding to the cone $\sigma$ is naturally identified with
\begin{equation*}
\Spec ( k [\chi^{m_1},...,\chi^{m_r},\chi^{\pm m_{r+1}}, ... ,\chi^{\pm m_n}]).
\end{equation*}
The torus orbit $O_\sigma \hookrightarrow \Uc_\sigma$ is the vanishing locus of $\chi^{m_1},...,\chi^{m_r}$. It splits as the product 
\begin{equation*}
\Spec ( k [\xi^{\pm m_{r+1}}]) \times \cdots \times \Spec ( k [\chi^{\pm m_n}])
\end{equation*}
of $T$-varieties, so in order to prove the claim, it is enough by the previous lemma and Proposition \fref{EquivariantChernOfRepresentations} to look at the $T$-action on $U = \Spec (k[\chi^{\pm m}])$, where $m$ is an arbitrary character $a_1 e^*_1 + \cdots + a_n e^*_n \in M$, and to make sure that it coincides with
\begin{equation*}
(\lambda_1,...,\lambda_n) u = \lambda_1^{a_1} \cdots \lambda_n^{a_n}.
\end{equation*} 
But as this action arises from the map of $k$-algebras
\begin{align*}
k[\chi^{\pm m}] &\to k[x_1^{\pm 1}, ... ,x_1^{\pm 1}] \otimes_k k[\chi^{\pm m}]\\
\chi^{m} &\mapsto x_1^{a_1} \cdots x_n^{a_n} \otimes \chi^m,
\end{align*} 
this is certainly true, and hence we are done.
\end{proof}

\subsubsection*{The elements $\xi_i$ and divisors of $X_\Delta$}

In order to have a geometric interpretation of multiplying elements of $B^T_*(X_\Delta)$ by $\xi_i$, we will express them with the help of divisors of $X_\Delta$. This is made easy by the following happy accident: if we use the good system of representations of Example \fref{StdSystemOfRepsForT}, then the intermediate approximations $U_K \times^T X_\Delta$ turn out to be toric varieties. (We replaced $i$ with $K$ to make the notation in the following less painful).

Recall that $U_K = (\A^{K+1} - 0)^n$ where the $i^{th}$ coordinate of $T$ acts diagonally on the $i^{th}$ copy of $\A^{K+1} - 0$. One immediately observes that this is a toric variety for $T^{K+1}$: it is given by the products of the fans one obtains from the standard $K+1$-cone $\langle e_0,...,e_N \rangle_{\R_{\geq 0}}$ in $\R^{K+1}$ after removing the interior of the maximal cone. Moreover, since the product of toric varieties is given by the product of respective fans, $U_K \times X_\Delta$ is a toric variety for $T^{K+2}$ given by the product fan inside the $n(K+2)$ dimensional vector space $N_\Rb^{K+2}$, whose basis we are going to denote by 
\begin{equation*}
e^1_0,...,e^1_K,e^2_0,...,e^2_K,...,e^n_0,...,e^n_K,e_1,...,e_n,
\end{equation*}
where $e^i_j$ is the $j^{th}$ basis vector for the space corresponding to the $i^{th}$ copy of $\A^{K+1}-0$, and $e_i$ are the basis vectors for the space corresponding to $X_\Delta$. 

The mixed quotient $U_K \times^T X_\Delta$ is a toric variety for the torus $T^{K+1}$ identified as the quotient $T^{K+2} / T$. Recalling that the action of $T$ on $U_K \times X_\Delta$ is the anti-diagonal one, we see that the lattice $N^{K+1}$ of one-parameter subgroups of $T^{K+1}$ corresponds to the quotient of $N^{K+2}$ by the sublattice generated by the vectors
\begin{equation*}
e_1 - (e^1_0 + ... + e^1_K), ... , e_n - (e^n_0 + ... + e^n_K). 
\end{equation*}
Note that the images of 
\begin{equation*}
e^1_0,...,e^1_{K-1},e^2_0,...,e^2_{K-1},...,e^n_0,...,e^n_{K-1},e_1,...,e_n
\end{equation*}
form a basis for the quotient space. It is now clear that the fan inside $N^{K+1}$ is \emph{almost} the fan corresponding to $\Proj^K \times ... \times \Proj^K \times X_\Delta$, except that the ''back rays'' (which correspond to the images of $e^i_K$) are not just simply $-(\overline e^i_0 + ... + \overline e^i_{K-1})$, but instead $\overline e_i-(\overline e^i_0 + ... + \overline e^i_{K-1})$. Thus the mixed quotient is an $X_\Delta$-bundle over $(\Proj^K)^n$.

From the last observation, we immediately get the following identity using the standard properties of divisors on toric varieties.

\begin{lem}
Let everything be as above, and let $D_i$ be the Cartier divisor associated to the ray generated by $\overline e_i-(\overline e^i_0 + ... + \overline e^i_{K-1})$. Then 
$$-[D_i] = \sum_{\rho \in \Delta} \langle e_i^*, v_\rho \rangle [D_\rho] \in \mathrm{Pic}(U_K \times^T X_\Delta),$$
where $\rho$ runs over all the rays of $\Delta$, $v_\rho$ is the primitive lattice vector generating $\rho$, and $D_\rho$ is the divisor associated to the ray $\rho$.
\end{lem} 
\begin{proof}
Indeed, this is exactly the relation given by the character $e_i^*$, see \cite{Ful2} Section 3.2 for more details on Picard groups of toric varieties.
\end{proof}

\begin{rem}
Note that in the above formula, when computing $\langle e_i^*, v_\rho \rangle$, it does not matter if we consider $e_i^*$ and $v_\rho$ to lie in $M^{K+2}$ and $N^{K+2}$ (with the convention for basis of $N^{K+2}$ we have fixed above), respectively, or in $M$ and $N$ respectively --- the result will be the same.
\end{rem}

This result is readily interpreted as an isomorphism of line bundles on $U_K \times^T X$
\begin{equation*}
\pi_i^* \mathcal O (-1) \cong \mathcal{O}(D_{\rho_1})^{\bigotimes \langle e_i^*, v_{\rho_1} \rangle} \otimes \cdots \otimes \mathcal{O}(D_{\rho_r})^{\bigotimes \langle e_i^*, v_{\rho_r} \rangle},
\end{equation*}
where $\rho_i$ are the rays of $\Delta$ enumerated in some order, and $\pi_i$ is the natural map 
$$U_K \times^T X \to (\Pb^K)^n \xrightarrow{\mathrm{pr}_i} \Pb^K.$$
As $\mathrm{pr}_i^* \mathcal O (-1)$ corresponds to the character line bundle of $e^*_i \in M$, we obtain the following identity by taking limits and linear combinations of the basis characters $e^*_i$.

\begin{lem}
Let everything be as above, and let $m \in M$ be a character of $T$. Then we get an equality of equivariant Chern class operators
$$\xi_m = \langle m, v_{\rho_1} \rangle \cdot_F c_1(D_{\rho_1}) +_F \cdots +_F \langle m, v_{\rho_r} \rangle \cdot_F c_1(D_{\rho_r}): B_*^T(X_\Delta) \to B_{*-1}^T(X_\Delta),$$
where $c_1(D_{\rho_i})$ is a shorthand for $c_1(\mathcal O (D_{\rho_i}))$.
\end{lem}

Consider the generators for $B^T_*(X_\Delta)$, where $X_\Delta$ is a non-singular toric variety for $T$, given by the surjective morphism (\fref{GeneratorsForHomologyOfToricVariety}). Let $V_\tau$ be an orbit closure corresponding to a cone $\tau \in \Delta$, and let $m$ be a character orthogonal to $\tau$. Then the above Lemma gives us the relation
\begin{equation*}
\xi_m [V_\tau] = (\langle m, v_{\rho_1} \rangle \cdot_F c_1(D_{\rho_1}) +_F \cdots +_F \langle m, v_{\rho_r} \rangle \cdot_F c_1(D_{\rho_r})) \cap [V_\tau].
\end{equation*}
Notice that by assumption $\langle m, v_{\rho} \rangle = 0$ for all rays $\rho$ in $\tau$. Because equivariant closed inclusions to smooth toric varieties induce injective pushforward morphisms in $B^T_*$, relations of above form are the only relations in $B^T_*$ between the generators $[V_\sigma]$. However, it seems hard to transform the above formulas to a useful form: one should replace all terms containing self intersections with $B^T_*$-linear combinations of $[V_\sigma]$, and this seems hard to do in general. 
 
\begin{rem}\label{ChowOfToric}
One can easily read off the description of the Chow groups of toric varieties achieved in \cite{FS} from this description, although one gets it immediately only for smooth toric varieties. As the formal group law of $\ch_*$ is the additive one, the relations we get for the equivariant Chow groups are
\begin{align*}
\xi_m [V_\tau] &= (\langle m, v_{\rho_1} \rangle c_1(D_{\rho_1}) + \cdots + \langle m, v_{\rho_r} \rangle c_1(D_{\rho_r})) \cap [V_\tau] \\ 
&= \sum_{\sigma \supset \tau} \langle m, n_\sigma \rangle [V_\sigma],
\end{align*}
where $\sigma$ runs over all the cones in $\Delta$ of one dimension higher than $\tau$ containing $\tau$, and $n_\sigma$ is the primitive generator for the ray of $\sigma$ not in $\tau$. Passing to the non-equivariant case (by setting all $\xi_m = 0$), we obtain
\begin{equation*}
\sum_{\sigma \supset \tau} \langle m, n_\sigma \rangle [V_\sigma] = 0
\end{equation*}
exactly corresponding to the relations given in \cite{FS}. We note, however, that here we fully described the $T$-equivariant Chow groups as well, at least for smooth toric varieties. The presentation in the singular case should follow easily from the descent exact sequence, and the proof left as an exercise for the reader.
\end{rem}

\subsection{Application --- algebraic bordism groups $\Omega_*(X_\Delta)$ of some toric varieties}\label{Computations}

In this subsection we apply the results obtained above in order to show through examples that it is sometimes possible to fully determine the structure of $\Omega_*(X_\Delta)$ where $X_\Delta$ is a singular toric variety. The main idea is to take a toric resolution $X_{\widetilde{\Delta}} \to X_\Delta$ of $X_\Delta$, compute the algebraic bordism of the smooth toric variety $X_{\widetilde{\Delta}}$, and then use
geometric arguments to determine $\Omega_*(X_\Delta)$. We will start with a general proposition, showing that oriented Borel--Moore homology theories satisfying localization and descent also satisfy a certain cosheaf condition.

\begin{prop}\label{CosheafCondition}
Let $B_*$ be an oriented Borel--Moore homology theory satisfying localization and descent. Moreover, let 
\begin{center}
\begin{tikzpicture}[scale=2]
\node (A1) at (0,1) {$E$};
\node (B1) at (1,1) {$\wtil X$};
\node (A2) at (0,0) {$Z$};
\node (B2) at (1,0) {$X$};

\path[every node/.style={font=\sffamily\small}]
(A1) edge[->] node[right]{$\pi'$} (A2)
(B1) edge[->] node[right]{$\pi$} (B2)
(A1) edge[->] node[above]{$i'$} (B1)
(A2) edge[->] node[above]{$i$} (B2)
;
\end{tikzpicture}
\end{center}
be an abstract blow-up square with $\pi$ an envelope. Then the sequence
$$B_*(E) \xrightarrow{(-\pi'_*, i'_*)} B_*(Z) \oplus B_*(\wtil X) \xrightarrow{i_* + \pi_*} B_*(X) \to 0$$
is exact.
\end{prop}
\begin{rem}
Recall that being an abstract blow-up square means that $i$ is a closed embedding, that $\pi$ is proper, birational, and an isomorphism over the complement of $Z$, and that the square is Cartesian. 
\end{rem}
\begin{proof}
Being a proper envelope is clearly stable under pullbacks, so $\pi'$ is a proper envelope. Moreover, since $B_*$ satisfies descent, $\pi_*$ is surjective, and therefore the complex is exact on the right. We are left to show the exactness at the middle.

We start by considering the commutative diagram
\begin{center}
\begin{tikzpicture}[scale=2]
\node (A1) at (0,1) {};
\node (A2) at (0,0) {$0$};
\node (B1) at (1,1) {$B_*(E \times_Z E)$};
\node (B2) at (1,0) {$B_*(E)/K$};
\node (C1) at (3,1) {$B_*(\wtil X \times_X \wtil X)$};
\node (C2) at (3,0) {$B_*(\wtil X)$};
\node (D1) at (5,1) {$B_*(U)$};
\node (D2) at (5,0) {$B_*(U)$};
\node (E1) at (6,1) {$0$};
\node (E2) at (6,0) {$0$};

\path[every node/.style={font=\sffamily\small}]
(B1) edge[->] node[right]{$\mathrm{pr}_{1*} - \mathrm{pr}_{2*}$} (B2)
(C1) edge[->] node[right]{$\mathrm{pr}_{1*} - \mathrm{pr}_{2*}$} (C2)
(D1) edge[->] node[right]{$0$} (D2)
(A2) edge[->] node[below]{} (B2)
(B2) edge[->] node[below]{$i'_*$} (C2)
(C2) edge[->] node[below]{$j'^*$} (D2)
(D2) edge[->] node[below]{} (E2)
(B1) edge[->] node[above]{$(i' \times_X i')_*$} (C1)
(C1) edge[->] node[above]{$(j' \times_X j')^*$} (D1)
(D1) edge[->] node[below]{} (E1)
;
\end{tikzpicture}
\end{center}
where $j'$ is the inclusion of the open complement of $Z$ to $\wtil X$, and $K$ is the kernel of $i'_*$. Note that the rows of this diagram are essentially localization exact sequences, and the columns are essentially the beginning of the descent exact sequence, the only difference being that we have $B_*(E) /K$ instead of just $B_*(E)$. By applying the snake lemma, we get an exact sequence
$$B_*(U) \xrightarrow{\delta} B_*(Z) / K' \xrightarrow{i_*} B_*(X) \xrightarrow{j^*} B_*(U) \to 0,$$
where $K'$ is the image of $K$ under $\pi'_*$. We claim that the connecting morphism $\delta$ is 0: indeed, by construction, $\delta(u)$ is obtained by first lifting $u$ to $\wtil u \in B_*(\wtil X \times_X \wtil X)$, then pushing it down to $B_*(\wtil X)$, lifting again to a class in $B_*(E)/K$, and then finally pushing down to $B_*(Z)/K'$. But one way of doing this is first finding a lift $\wtil u' \in B_*(\wtil X)$, and then setting
$$\wtil u := \Delta_{\wtil X / X *} (\wtil u ') \in B_*(\wtil X \times_X \wtil X),$$
where $\Delta_{\wtil X / X}$ is the diagonal morphism. As $\mathrm{pr}_{1*}(\wtil u) - \mathrm{pr}_{2*}(\wtil u)= 0$, we see that $\delta (u) = 0$, and therefore $i_*: B_*(Z)/K' \to B_*(X)$ is injective.

The above arguments have shown us that the rows of the commutative diagram
\begin{center}
\begin{tikzpicture}[scale=2]
\node (A1) at (0,1) {$K$};
\node (A2) at (0,0) {$K$};
\node (B1) at (1,1) {$B_*(E)$};
\node (B2) at (1,0) {$B_*(Z)$};
\node (C1) at (2.5,1) {$B_*(\wtil X)$};
\node (C2) at (2.5,0) {$B_*(X)$};
\node (D1) at (4,1) {$B_*(U)$};
\node (D2) at (4,0) {$B_*(U)$};
\node (E1) at (5,1) {$0$};
\node (E2) at (5,0) {$0$};

\path[every node/.style={font=\sffamily\small}]
(A1) edge[->] node[right]{$\mathrm{Id}$} (A2)
(B1) edge[->] node[right]{$\pi'_*$} (B2)
(C1) edge[->] node[right]{$\pi_*$} (C2)
(D1) edge[->] node[right]{$\mathrm{Id}$} (D2)
(A2) edge[->] node[below]{} (B2)
(B2) edge[->] node[below]{$i_*$} (C2)
(C2) edge[->] node[below]{$j^*$} (D2)
(D2) edge[->] node[below]{} (E2)
(A1) edge[->] node[below]{} (B1)
(B1) edge[->] node[above]{$i'_*$} (C1)
(C1) edge[->] node[above]{$j'^*$} (D1)
(D1) edge[->] node[below]{} (E1)
;
\end{tikzpicture}
\end{center}
are exact. One then shows by a simple diagram chase, that given $a \in B_*(\wtil X)$ and $b \in B_*(Z)$ whose images in $B_*(X)$ coincide, there exists $c \in B_*(E)$ so that $i'_*(c) = a$ and $\pi'_*(c)=b$. This proves the exactness of
$$B_*(E) \xrightarrow{(-\pi'_*, i'_*)} B_*(Z) \oplus B_*(\wtil X) \xrightarrow{i_* + \pi_*} B_*(X),$$
so we are done.  
\end{proof}

\begin{ex}
Consider the complete fan $\Delta$ spanned by the rays $\tau_1 = \langle 1,0 \rangle$, $p_n = \langle -1, n \rangle$, $\tau_3 = \langle -1, 0 \rangle$ and $q_m = \langle -1, -m \rangle$, $m,n \geq 1$. We obtain a resolution $\widetilde \Delta$ by adding the rays $\tau_2 = \langle 0,1 \rangle$, $\tau_4 = \langle 0,-1 \rangle$, $p_i = \langle -1, i \rangle$ and $q_j = \langle -1, -j \rangle$ for $i = 1..n-1$ and $j = 1..m$. 

In order to compute $\Omega_*(X_{\widetilde \Delta})$, we first note that the relations generated by the rays of the fan simply identify all the maximal cones with each other. Hence we are left with the presentation
\begin{align*}
\Omega_*(X_{\widetilde \Delta}) = \langle &s, \tau_1, \tau_2, \tau_3, \tau_4, p_1,...,p_{n}, q_1,..., q_{m}, \sigma \mid \\
& \tau_1 -_F (\tau_3 +_F p_1 +_F + \cdots +_F p_n +_F q_1 +_F \cdots +_F q_m) = 0, \\
& p_1 +_F 2 \cdot_F p_2 +_F \cdots +_F n \cdot_F p_n -_F (q_1 +_F 2 \cdot_F q_2 +_F \cdots +_F m \cdot_F q_M) =0 \rangle.
\end{align*}
In order to arrive at $\Omega_*(X_\Delta)$, we notice that all the exceptional divisors are isomorphic to either $\Pb^1$ or chains of $\Pb^1$, and it is easy to argue using Proposition \fref{CosheafCondition} that the additional relations we must add are exactly
\begin{equation*}
\tau_2 = \tau_4 = p_1 = \cdots = p_{n-1} = q_1 = \cdots = q_{m-1} = [\Proj^1] \times pt = -a_{11} \sigma.
\end{equation*}
As
\begin{align*}
p_1 +_F 2 \cdot_F p_2 +_F \cdots +_F n \cdot_F p_n &= \sum_{i=1}^n i p_i - a_{11} \Bigg(\sum_{i=1}^{n-1} i (i-1) - {n(n-1) \over 2} + \sum_{i=1}^{n-1} i(i+1)\Bigg) \sigma \\
&= \sum_{i=1}^n i p_i + a_{11}{n(n-1) \over 2} \sigma \\
&= n p_n,
\end{align*}
(first linear terms, then two terms from self intersections, and finally term coming from intersections between consecutive $p_i$) and as $p_i$ and $q_j$ do not intersect, we finally arrive at the description
\begin{align*}
\Omega_*(X_\Delta) =  \langle &s, \tau_3, p_{n}, q_{m}, \sigma \mid np_n - m q_m = 0 \rangle.
\end{align*}
\end{ex}

\begin{ex}\label{AlgCobOfCube}
Consider next $\Delta$ to be the fan over the cube, as in the end of Chapter 2 of \cite{Ful2}. The cube has vertices at points $(\pm 1, \pm 1, \pm 1)$ and we consider this as a rational polytope in the lattice generated by the vertices of the cube. The fan is the fan whose cones are generated by the faces, edges and vertices of the cube. As a toric resolution $X_{\widetilde \Delta} \to X_{\Delta}$ we subdivide each face of the cube diagonally, in order to obtain a fan corresponding to $\Proj^3$ blown up at the four $T$-fixed points.

As $X_{\widetilde \Delta}$ is obtained by blowing up $4$-points on the $\Proj^3$, it follows that
\begin{equation*}
\Omega_*(X_{\widetilde \Delta}) = \langle s, \tau, \sigma, \rho, \tau_1, \sigma_1, ...,\tau_4,  \sigma_4 \rangle,
\end{equation*} 
where the first four basis elements come from the $\Proj^3$ before blowup, and the next eight are the new elements introduced by the four blowups. As the original variety $X_\Delta$ can be obtained from $X_{\wtil \Delta}$ by contracting the strict transforms of the six lines connecting $T$-equivariant points of $\Pb^3$, and as the class of the strict transform of the line connecting $i^{th}$ and $j^{th}$ $T$-fixed point has to be of form
$$\sigma - \sigma_i - \sigma_j + b \rho$$
for some $b \in \Omega_1(pt)$ not depending on $i$ and $j$, we need to add the relations
$$\sigma - \sigma_i - \sigma_j = - (a_{11} + b) \rho \qquad (i \not = j)$$
to $\Omega_*(X_{\wtil \Delta})$ in order to obtain $\Omega_*(X_\Delta)$ (another easy application of Proposition \fref{CosheafCondition}). These relations have the effect of identifying the classes of $\sigma_i$ with each other --- let us denote this class by $\sigma'$ --- and finally $\sigma = 2 \sigma' - (a_{11} + b) \rho$. Hence,
\begin{equation*}
\Omega_*(X_{\Delta}) = \langle s, \tau, \tau_1, \tau_2, \tau_3, \tau_4,  \sigma', \rho \rangle.
\end{equation*}

\end{ex}

\begin{rem}
It is worth to note that the algebraic bordism groups of the singular toric varieties we computed in these two examples are very similar to the corresponding Chow groups. In fact, one immediately observes that they are abstractly isomorphic to the tensor product of the Chow groups with the Lazard ring. One could ask the question if this is always the case. The author was not able to find any counter examples.
\end{rem}

\subsection{Stanley--Reisner presentations and piecewise functions on a fan}\label{Generalities}

Let us take another look at the relations of $B^T_* (X_\Delta)$ for a nonsingular toric variety $X_\Delta$ found in the end of the Section \fref{LinearLines}. If we consider the equivariant homology group as the equivariant cohomology ring $B_T^* (X_\Delta) = B_{n-*}^T (X_\Delta)$, where multiplication is given by the intersection product, then the relations 
\begin{equation}\label{LinRelExample}
\xi_i = \langle e^*_i, v_{\rho_1} \rangle \cdot_F \rho_1 +_F \cdots +_F \langle e^*_i, v_{\rho_r} \rangle \cdot_F \rho_r,
\end{equation}
where $\rho_j$ are the rays of $\Delta$ enumerated in some order, transform into the following result.

\begin{thm}
Let $X_\Delta$ be a non-singular toric variety for $T$. Then the ring $B_T^* (X_\Delta)$ is isomorphic to the graded power series ring $B^* [[\rho_1, ..., \rho_r]]_\gr$ modulo the Stanley--Reisner relations
\begin{equation*}
\rho_{i_1} \cdots \rho_{i_j} = 0 
\end{equation*}
whenever $\rho_{i_1}, ..., \rho_{i_j}$ do not span a cone in $\Delta$.
\end{thm}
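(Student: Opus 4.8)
The plan is to exhibit an explicit isomorphism onto $B_T^*(X_\Delta)$ from the candidate ring and to prove bijectivity by comparing both sides with the $B^*_T$-module decomposition of $B_*^T(X_\Delta)$ obtained above. Write $I_{SR} \subset B^*[[\rho_1,\dots,\rho_r]]$ for the closed ideal generated by the square-free monomials $\rho_{i_1}\cdots\rho_{i_j}$ whose rays span no cone of $\Delta$, and set $\bar S = B^*[[\rho_1,\dots,\rho_r]]/I_{SR}$. First I would define a continuous graded $B^*$-algebra homomorphism $\phi_0 : B^*[[\rho_1,\dots,\rho_r]] \to B_T^*(X_\Delta)$ by $\rho_i \mapsto c_1(\mathcal O(D_{\rho_i}))$; this is legitimate since the classes $c_1(\mathcal O(D_{\rho_i}))$ are topologically nilpotent for the defining filtration, so power series converge. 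Because $X_\Delta$ is smooth, the invariant divisors meet transversally, so $c_1(\mathcal O(D_{\rho_{i_1}}))\cdots c_1(\mathcal O(D_{\rho_{i_j}}))$ is the class of $D_{\rho_{i_1}}\cap\cdots\cap D_{\rho_{i_j}}$; when the rays span no cone this intersection is empty and the product vanishes, so $\phi_0(I_{SR})=0$ and $\phi_0$ descends to $\phi:\bar S \to B_T^*(X_\Delta)$. Equipping $\bar S$ with the $B^*_T$-structure through $\xi_i \mapsto \langle e_i^*, v_{\rho_1}\rangle \cdot_F \rho_1 +_F \cdots +_F \langle e_i^*, v_{\rho_r}\rangle \cdot_F \rho_r$, the identity for $\xi_i$ recorded at the end of Section \fref{LinearLines} makes $\phi$ a $B^*_T$-linear ring map.

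Surjectivity is immediate from the decomposition theorem: $B_T^*(X_\Delta)$ is generated over $B^*_T$ by the orbit-closure classes $[V_\sigma]$, and by transversality $[V_\sigma] = \prod_{\rho\subset\sigma} c_1(\mathcal O(D_\rho)) = \phi(u_\sigma)$, where $u_\sigma$ is the square-free monomial in the rays of $\sigma$. Since the generators $\xi_i$ of $B^*_T$ are themselves in the image of $\phi$, the image contains a full generating set and $\phi$ is onto.

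Injectivity is the heart of the matter, and I would prove it by induction on the number of cones of $\Delta$, using the localization sequence. Removing the interior of a maximal cone $\sigma$ (of dimension $d$) gives $\Delta' = \Delta\setminus\{\sigma\}$, and as in Lemma \fref{EquivInj} and Corollary \fref{EmbeddingsToToricAreInjective} the localization sequence is a short exact sequence $0 \to B_*^T(O_\sigma) \xrightarrow{i_*} B_T^*(X_\Delta) \xrightarrow{j^*} B_T^*(X_{\Delta'}) \to 0$. Algebraically $\Delta'$ has the same rays as $\Delta$ but $\sigma$ no longer spans a cone, so $I_{SR}(\Delta') = I_{SR}(\Delta) + (u_\sigma)$ and we get $0 \to K \to \bar S_\Delta \xrightarrow{q} \bar S_{\Delta'} \to 0$ with $K = B^*_T\cdot u_\sigma$. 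As $\phi_\Delta(u_\sigma) = [V_\sigma] = i_*[1_{O_\sigma}]$ (note $O_\sigma$ is closed, so $V_\sigma = O_\sigma$), the maps $\phi_\Delta$ and $\phi_{\Delta'}$ assemble into a commuting ladder between these two sequences carrying $K$ into the image of $i_*$. Since $\phi_{\Delta'}$ is an isomorphism by the inductive hypothesis (base case: the empty fan), the five lemma reduces everything to showing that $\phi_\Delta$ restricts to an isomorphism $K \isomto B_*^T(O_\sigma)$.

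This last step is the expected obstacle. By the Proposition computing $B_*^T(O_\sigma)$ in Section \fref{LinearLines}, the target is $B^*_T/(\xi_{m_{d+1}},\dots,\xi_{m_n})$ up to a degree shift, where $m_{d+1},\dots,m_n$ span the forms vanishing on $\sigma$; and $K \cong B^*_T/\mathrm{Ann}_{B^*_T}(u_\sigma)$, so it suffices to prove $\mathrm{Ann}_{B^*_T}(u_\sigma) = (\xi_m : m\perp\sigma)$, after which $\phi_\Delta$ respects the generators and the induced map of quotients is the identity. The inclusion $\supseteq$ is formal: if $m\perp\sigma$ then every coefficient $\langle m,v_\rho\rangle$ with $\rho\subset\sigma$ is zero, so $\xi_m$ is a formal-group sum of ray classes lying outside $\sigma$, and adjoining any such ray to the rays of $\sigma$ yields a set spanning no cone (by maximality of $\sigma$), whence $\xi_m u_\sigma = 0$ by the Stanley--Reisner relations. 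The reverse inclusion is the genuine content -- the freeness statement that $u_\sigma$ is annihilated by nothing outside $(\xi_m:m\perp\sigma)$. I would prove it by choosing forms $m_1,\dots,m_d$ dual to the rays of $\sigma$ and completing them to a basis by $m_{d+1},\dots,m_n$ spanning $\sigma^\perp$; the Proposition on linearly independent forms in Section \fref{LinearLines} shows $\xi_{m_1},\dots,\xi_{m_n}$ is a regular system, and since $\xi_{m_l}u_\sigma = \rho_{i_l}u_\sigma + (\text{higher order in the rays of }\sigma)$ while rays outside $\sigma$ annihilate $u_\sigma$, a unitriangular leading-term argument identifies $B^*_T\cdot u_\sigma$ with $B^*_T/(\xi_{m_{d+1}},\dots,\xi_{m_n})$, compatibly with $\phi$. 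The five lemma then closes the induction and yields the asserted isomorphism.
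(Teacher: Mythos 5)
Your proposal is correct, but it takes a genuinely different route from the paper. The paper proves the theorem by a direct, global leading-term argument: it takes a nonzero homogeneous element of the Stanley--Reisner ring, picks a \emph{minimal} cone $\sigma$ supporting a nontrivial term, and uses the chosen forms $m_{\rho,\sigma}$ to rewrite that term in the variables $\xi_l$ modulo higher-degree terms and terms supported on other cones, concluding that it cannot be killed; surjectivity is declared to be ``of the same spirit'' and left to the reader. You instead run an induction on the number of cones via the localization sequence, exactly parallel to the paper's own proofs of Lemma \fref{EquivInj}, Corollary \fref{EmbeddingsToToricAreInjective} and the equivariant K\"unneth lemma in Section \fref{KunnethSec}: the five lemma reduces everything to the single ``local'' statement that $\phi_\Delta$ restricts to an isomorphism from the ideal $(u_\sigma)$ onto $i_*B^T_*(O_\sigma)\cong B^*_T/(\xi_m : m\perp\sigma)$, and that statement is handled by a unitriangular change of variables between the ray classes of $\sigma$ and the dual forms $\xi_{m_1},\dots,\xi_{m_d}$. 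This buys you two things the paper does not supply: surjectivity falls out for free from the decomposition theorem (via $[V_\sigma]=\prod_{\rho\subset\sigma}c_1(\mathcal{O}(D_\rho))$), and the hard analysis is localized to one maximal cone at a time rather than spread over all cones at once; the cost is the extra bookkeeping of the ladder, the identification $I_{SR}(\Delta')=I_{SR}(\Delta)+(u_\sigma)$, and the base case. Two small points to tighten: (i) the kernel $K$ is a priori the \emph{ideal} generated by $u_\sigma$ in $\bar S_\Delta$, and its identification with the cyclic module $B^*_T\cdot u_\sigma$ is itself part of the unitriangular argument, so it should not be asserted when the ladder is first set up; relatedly, for the four/five lemma you only need the formal inclusion $(\xi_m:m\perp\sigma)\subseteq\mathrm{Ann}(u_\sigma)$ together with cyclicity and the projection-formula identity $\phi(bu_\sigma)=i_*(\bar b)$ --- the reverse inclusion, which you flag as the genuine content, then follows from injectivity of $i_*$ (Lemma \fref{EquivInj}) rather than needing a separate algebraic proof. (ii) The base case should be the fan $\{0\}$ (so $X=T$, both sides equal to $B^*$ by the Morita proposition), since the empty fan forces awkward conventions about the empty monomial.
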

\begin{proof}
Certainly the Stanley--Reisner ring $R$ maps to $B_T^*(X_\Delta)$. In order to prove that this is an isomorphism, let us consider a monomial of form
\begin{equation*}
\rho_{i_1}^{n_{i_1}} \cdots \rho_{i_j}^{n_{i_j}},
\end{equation*}
where the exponents are strictly positive, and the rays span a cone $\sigma$, which we are going to call the \emph{support} of the monomial.

Suppose then that we have a nonzero element of the graded Stanley--Reisner ring. We are going to show that it determines a nonzero element in $B_T^*(X_\Delta)$. Without loss of generality, we may assume that the power series is homogeneous, i.e., it is of form
\begin{equation*}
\beta = \sum_{i_1,..., i_n} b_{i_1, ..., i_n} \rho_1^{i_1} \cdots \rho_r^{i_n}
\end{equation*}
where $b_{i_1, ..., i_r} \in B_{k + i_1 + \cdots + i_r}(pt)$ for some fixed $k$. Let $\sigma$ be a cone that is the support of a nontrivial term of $\beta$, and let $j_\sigma: O_\sigma \hookrightarrow X_\Delta$ be the locally closed inclusion of the orbit corresponding to $\sigma$. We claim that
$$j_\sigma^*(\beta) \not = 0 \in B_T^*(O_\sigma).$$
Indeed, without a loss of generality, $\sigma$ is the cone spanned by $e_1,...,e_r$ and hence by Proposition \fref{HomologyOfOrbits}
$$B_T^*(O_\sigma) \cong B^*(pt)[[\xi_1,...,\xi_n]]_\gr/(\xi_{r+1},...,\xi_{n}).$$
Moreover, if $\rho_i$, where $i \leq r$, is a ray of $\sigma$, then we can use the relations (\fref{LinRelExample}) to conclude that
$$\xi_i = \rho_i \in B_T^*(O_\sigma),$$
and therefore $j_\sigma^*(\beta) \not = 0 \in B_T^*(O_\sigma)$, proving the injectivity of the map $R \to B_T^*(X)$. The proof of surjectivity is of the same spirit, and is left to the reader.
\end{proof}

The above result extends the well known results known for Chow groups, $K$-theory and algebraic bordism. It also illuminates that in all of the cases the reason of the Stanley--Reisner ring appearing is the same.

Another well known presentation of the $T$-equivariant cohomology ring of a smooth toric variety $X_\Delta$ is in terms of the global sections of a sheaf of functions (of some kind) on the fan $\Delta$. Examples include the $T$-equivariant Chow ring, which is described in terms of piecewise polynomial functions on $\Delta$, and the $T$-equivariant algebraic cobordism, where we have piecewise graded power series on $\Delta$. This presentation has the advantage of generality over the Stanley--Reisner presentation: usually the same presentation describes the $T$-equivariant \emph{operational} cohomology rings of singular toric varieties as well (see \cite{AP,Pay,Karu}). We can obtain similar description for arbitrary theory $B_*^T$, at least in the smooth case.

We quickly recall what we mean by functions on a fan. Consider a smooth toric variety $X_\Delta$. Since $X_\Delta$ is smooth, the inclusions of the orbits $O_\sigma \to X_\Delta$ are regular, and therefore we get a natural map
\begin{equation*}
i: B_T^*(X) \to \prod_{\sigma \in \Delta}B_T^*(O_\sigma)
\end{equation*}
induced by the l.c.i. pullbacks. We can think $B_T^*(O_\sigma)$ as a stalk of a sheaf at the cone $\sigma$, and by the description $B_T^*(O_\sigma)$ we obtained earlier, we see that for a inclusion $\tau \subset \sigma$ of fans, we get a surjective restriction morphism
\begin{equation*}
B^*_T(O_\sigma) \to B^*_T(O_\tau)
\end{equation*}
and the basic functoriality properties of these restriction morphisms imply that the rings at cones glue together to give a sheaf on the fan, where the open sets are taken to be the subfans. We call this the sheaf of graded power series on the fan $\Delta$.

\begin{thm}
Let everything be as above. Then the map $i$ identifies $B^*_T(X_\Delta)$ with the global sections of the sheaf of graded power series on $\Delta$. 
\end{thm}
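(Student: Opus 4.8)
The plan is to reduce the statement to a purely combinatorial identity by means of the Stanley--Reisner presentation established in the previous theorem, and then to settle that identity by an elementary induction on the dimensions of the cones. First I would pin down the three ingredients concretely. Applying the previous theorem both to $X_\Delta$ and to each affine chart $\mathcal U_\sigma$ identifies $B_T^*(X_\Delta)$ with the Stanley--Reisner ring $R = B^*[[\rho_1,\dots,\rho_r]]/I_{SR}$, where $\rho_i$ ranges over the rays and $I_{SR}$ is generated by the monomials whose support does not span a cone, and identifies each stalk $B_T^*(O_\sigma)\cong B_T^*(\mathcal U_\sigma)$ with the free graded power series ring $B^*[[\rho:\rho\preceq\sigma]]$ on the rays of $\sigma$ (the chart $\mathcal U_\sigma$ contributes no relations, since every subset of the rays of a smooth cone spans a face). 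I would then check that under these identifications the pullback $i_\sigma^*$ is nothing but the ``kill the rays not in $\sigma$'' map $f\mapsto f|_\sigma$: it factors as the open restriction $B_T^*(X_\Delta)\to B_T^*(\mathcal U_\sigma)$ followed by the homotopy isomorphism $B_T^*(\mathcal U_\sigma)\isomto B_T^*(O_\sigma)$, and on the ray class $\rho=c_1(\mathcal O(D_\rho))$ it sends $\rho\mapsto\rho$ when $\rho\preceq\sigma$ and $\rho\mapsto 0$ otherwise, because $D_\rho$ meets $\mathcal U_\sigma$ exactly when $\rho$ is a ray of $\sigma$.

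Since the sheaf restriction $B_T^*(O_\sigma)\to B_T^*(O_\tau)$ is the same ``kill rays'' operation, this description simultaneously shows that $i$ lands in the global sections and reduces the theorem to the algebraic claim that
\[
R\longrightarrow\Gamma(\Delta,\mathcal F),\qquad f\longmapsto (f|_\sigma)_{\sigma\in\Delta},
\]
is an isomorphism, where a global section is a compatible collection $(s_\sigma)$ with $s_\sigma\in B^*[[\rho:\rho\preceq\sigma]]$ and $s_\sigma|_\tau=s_\tau$ for $\tau\preceq\sigma$. Well-definedness is immediate, as a non-cone monomial restricts to $0$ on every $\sigma$.

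For injectivity I would represent $f\in R$ by its cone-supported monomials and group them as $f=\sum_\sigma f_\sigma$, where $f_\sigma$ collects the monomials whose support is exactly the set of rays of $\sigma$; then $f|_\sigma=\sum_{\tau\preceq\sigma}f_\tau$. Inducting on $\dim\sigma$, the vanishing of all $f|_\sigma$ forces $f_{\{0\}}=f|_{\{0\}}=0$, then $f_\rho=f|_\rho-f_{\{0\}}=0$ for each ray, and in general $f_\sigma=0$, so $f=0$.

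Surjectivity is the heart of the argument, and I would build the preimage $f=\sum_\sigma f_\sigma$ by induction on dimension. Having produced $f^{(k)}$ with $f^{(k)}|_\sigma=s_\sigma$ for all $\dim\sigma\le k$, for a cone $\sigma$ of dimension $k+1$ the difference $g_\sigma:=s_\sigma-f^{(k)}|_\sigma$ restricts to $0$ on every proper face of $\sigma$, by compatibility of the $s_\sigma$ together with the inductive hypothesis. The key elementary lemma is that a power series in the ray variables of $\sigma$ which vanishes whenever any single one of them is set to zero is divisible by the product of all of them; hence $g_\sigma$ is supported exactly on $\sigma$, and setting $f_\sigma:=g_\sigma$ and $f^{(k+1)}:=f^{(k)}+\sum_{\dim\sigma=k+1}f_\sigma$ one verifies, using that $f_\sigma|_{\sigma'}=0$ whenever $\sigma'$ fails to contain all rays of $\sigma$, that $f^{(k+1)}|_\sigma=s_\sigma$ for all $\dim\sigma\le k+1$. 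As the fan is finite this terminates and yields the desired $f$. I expect the main obstacle to be the faithful translation in the first paragraph, namely confirming that the geometrically defined pullbacks $i_\sigma^*$ and the algebraically defined sheaf restrictions are both literally the single ``kill non-$\sigma$ rays'' operation so that every square commutes on the nose; once that bookkeeping is in place, the combinatorial core, and in particular the divisibility lemma driving surjectivity, is entirely elementary.
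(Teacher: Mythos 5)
Your proposal is correct and follows essentially the same route as the paper: both reduce the statement to the Stanley--Reisner presentation of $B_T^*(X_\Delta)$ and of the stalks $B_T^*(O_\sigma)$, and both identify the l.c.i.\ pullbacks and the sheaf restrictions as the ``kill the rays not in $\sigma$'' maps, from which injectivity follows because every cone-supported monomial survives in some stalk. The only real difference is that where the paper dismisses surjectivity as ``clear,'' you actually prove it, via the induction on cone dimension and the elementary divisibility lemma; that is a genuine improvement in completeness rather than a different method.
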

\begin{proof}
The proof is mostly formal from the Stanley--Reisner description. Consider at first a single l.c.i. pullback
\begin{equation*}
B_T^*(X_\Delta) \to B_T^*(O_\sigma)
\end{equation*}
which, we recall, is a ring homomorphism. Let $\sigma$ be spanned by rays $\tau_{i_1},...,\tau_{i_j}$, and note that $B_T^*(O_\sigma)$ can be identified as the graded power series algebra of $\tau_1,...,\tau_r$ over the base ring $B^*$. I claim that the pullback of any monomial including any other ray $\tau$ must be zero. This is because the support of such a monomial does not meet the orbit $O_\sigma$ (although it might meet its closure $V_\sigma$). This proves the injectivity of $i$, and in fact, we see that the pullback to \emph{minimal} orbits would have been injective already.

Let us then have an element $(f_\sigma)_{\sigma \in \Delta}$, where  $f_\sigma \in B^*_T(O_\sigma)$, which corresponds to a global section of the sheaf of graded power series, i.e., this collection respects the restriction maps. It is clear that we can always find an element of the Stanley--Reisner ring pulling back to this collection, finishing the proof.
\end{proof}

This description should easily generalize to the operational cohomology of an arbitrary toric variety using the techniques of \cite{Pay}, and perhaps assuming some extra compatibility conditions on the theory.

\section{Künneth formula and a universal coefficient theorem}\label{StructureResults}

In this section we generalize the results of \cite{FS} concerning Künneth formula and operational cohomology rings to arbitrary Borel--Moore homology theories satisfying certain extra assumptions. This will be fairly straightforward after all the work in the previous section. Throughout the section $B_*$ will denote an oriented Borel--Moore homology theory satisfying localization and $T$ will be a fixed split torus of rank $n$.

\subsection{Künneth Formula}\label{KunnethSec}

The purpose of this subsection is to prove the following:

\begin{thm}\label{Kunneth}
Let $B_*$ be an oriented Borel--Moore homology theory satisfying localization and descent, and let $X_\Delta$ be a toric variety. Then the Künneth morphism
\begin{equation*}
B_*(X_\Delta) \otimes_{B^*} B_*(Y) \to B_*(X_\Delta \times Y)
\end{equation*}
is an isomorphism for all varieties $Y$. (We say that $X_\Delta \times Y$ satisfies Künneth formula in $B_*$.)
\end{thm}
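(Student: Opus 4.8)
The plan is to reduce the general Künneth statement to cases already handled in the paper, using the localization and descent sequences as the two main engines. By Proposition \ref{ModifiedTotaro} we already know the Künneth map is surjective whenever $X_\Delta$ is a linear variety, and smooth (or even just simplicial) toric varieties are built up from affine spaces and tori via the cone filtration, so they are linear. The real content is therefore \emph{injectivity} for all toric varieties, together with handling the \emph{singular} case where linearity may fail. My strategy has two layers: first establish the full isomorphism for smooth toric varieties using the decomposition results of Section \ref{OBMofToric}, then descend to arbitrary (singular) toric varieties via a toric resolution and the descent sequence.

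For the smooth case, I would exploit the decomposition of $B_*(X_\Delta)$ obtained in Sections \ref{OBMofToric} and \ref{LinearLines}. The key structural fact is that $B_*(X_\Delta)$ is built by successive extensions coming from the short exact sequences
\begin{equation*}
0 \to B_*^T(O_\sigma) \to B_*^T(X_\Delta) \to B_*^T(X_{\Delta'}) \to 0,
\end{equation*}
peeling off one maximal cone at a time. The plan is to prove Künneth by induction on the number of cones, running the localization sequence for $X_\Delta = O_\sigma \sqcup X_{\Delta'}$ simultaneously with its product with $Y$. At each stage one gets a ladder of two localization sequences connected by Künneth maps; since tensoring with $B_*(Y)$ over $B^*$ is right exact, the top row stays exact on the right, and the bottom row is exact by the localization axiom. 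The base cases $X_\Delta = O_\sigma$ (a torus) and $X_\Delta = \A^n$ are already covered by the earlier propositions on torus orbits and the extended homotopy property. A four- or five-lemma argument then propagates the isomorphism, provided one controls the leftmost terms; the injectivity at each step is exactly what Corollary \ref{EmbeddingsToToricAreInjective} and Lemma \ref{EquivInj} were set up to provide (in the equivariant setting, which then specializes via the corollary giving $B^* \otimes_{B^*_T} B_*^T(X) \cong B_*(X)$).

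For the singular case I would pass to a smooth toric resolution $\pi \colon X_{\widetilde\Delta} \to X_\Delta$ and use the descent sequence
\begin{equation*}
B_*(X_{\widetilde\Delta} \times_{X_\Delta} X_{\widetilde\Delta}) \xrightarrow{p_{1*}-p_{2*}} B_*(X_{\widetilde\Delta}) \xrightarrow{\pi_*} B_*(X_\Delta) \to 0.
\end{equation*}
The fibre product $X_{\widetilde\Delta} \times_{X_\Delta} X_{\widetilde\Delta}$ is again a toric variety (for the product/refined fan), and it is resolved by a smooth toric variety, so by the smooth case—applied to $X_{\widetilde\Delta}$ and, after one more resolution, to the fibre product—both outer terms satisfy Künneth. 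Tensoring the whole descent sequence with $B_*(Y)$ over $B^*$ and comparing with the descent sequence for the $Y$-products (using that $(X_{\widetilde\Delta}\times Y) \to (X_\Delta \times Y)$ is still an envelope and that the fibre product commutes with $\times Y$), a diagram chase with the right-exactness of $\otimes$ forces the Künneth map for $X_\Delta \times Y$ to be an isomorphism. The main obstacle I anticipate is precisely the bookkeeping in this last step: ensuring the fibre product of a toric resolution is handled by induction (it may be singular, requiring its own resolution, so one needs an induction on dimension or on a resolution complexity to avoid circularity), and verifying that descent and the Künneth map are compatible on the nose so that the three-term ladder collapses correctly. Surjectivity is never in serious doubt given Proposition \ref{ModifiedTotaro} and right-exactness; it is injectivity, propagated through these two exact-sequence inductions, that will require the most care.
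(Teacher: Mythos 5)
Your proposal splits into a smooth layer and a descent layer; the smooth layer is essentially the paper's argument, but the descent layer contains a genuine error. The claim that $X_{\widetilde\Delta}\times_{X_\Delta}X_{\widetilde\Delta}$ ``is again a toric variety (for the product/refined fan)'' is false: the fibre product of a toric resolution is in general \emph{reducible}, hence not a toric variety at all. For example, if $X_\Delta$ is the quadric cone (the affine surface of the cone $\langle e_2,\,2e_1-e_2\rangle$) and $X_{\widetilde\Delta}$ its resolution with exceptional curve $E\cong\Proj^1$, then $X_{\widetilde\Delta}\times_{X_\Delta}X_{\widetilde\Delta}$ is the union of the diagonal copy of $X_{\widetilde\Delta}$ and $E\times E\cong\Proj^1\times\Proj^1$, two distinct irreducible surfaces. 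So ``resolve the fibre product by a smooth toric variety and apply the smooth Künneth isomorphism'' has no meaning, and the circularity you worry about is a symptom of this wrong turn rather than a bookkeeping issue. The repair is both necessary and much cheaper: in the descent ladder the diagram chase needs only that the middle vertical Künneth map is an isomorphism and the \emph{leftmost} vertical map is a \emph{surjection} (both rows are right exact). Surjectivity on the fibre-product term follows from Proposition \fref{ModifiedTotaro} once one observes that $X_{\widetilde\Delta}\times_{X_\Delta}X_{\widetilde\Delta}$ is a ($T$-)linear scheme: it is stratified by the sets $\widetilde O\times_O\widetilde O$, and each of these is a torus because each restricted orbit map $\widetilde O\to O$ is a split torus projection. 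This is exactly what the paper does; with this replacement your ordering of the steps (tensor down after the smooth case, then run descent non-equivariantly) does close up, since an envelope times the identity on $Y$ is still an envelope.

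In the smooth layer, two points need more care than your wording suggests. First, the injectivity required on the left edge of the localization ladder is not that of Lemma \fref{EquivInj} or Corollary \fref{EmbeddingsToToricAreInjective} themselves, but their version with the $Y$-factor attached, $i_*\colon B^T_*(O_\sigma\times Y)\to B^T_*(X_\Delta\times Y)$; this is where the hypothesis that $T$ acts trivially on $Y$ enters, since by Proposition \fref{TrivialAction} one has $B^T_*(Y)=B^*_T \widehat \otimes_{B^*} B_*(Y)$, so that multiplication by $\xi_1\cdots\xi_r$ is injective on $B^T_*(O_\sigma\times Y)\cong B^*_T/(\xi_{r+1},\dots,\xi_n)\otimes_{B^*_T}B^T_*(Y)$, whence $i_*$ is injective. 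Second, the \emph{entire} induction must be run equivariantly and only the final Künneth isomorphism specialized via $B^*\otimes_{B^*_T}-$: that functor is merely right exact, so equivariant injectivity statements do not survive specialization. Indeed non-equivariantly there is no injectivity to be had on the left edge: $s_*\colon B_*(pt)\to B_*(\A^1)$ is the zero map, since $s^*$ is an isomorphism and $s^*s_*=c_1(\mathcal{O})=0$. Your parenthetical ``in the equivariant setting, which then specializes via the corollary'' suggests you intend the correct order, but the phrase ``tensoring with $B_*(Y)$ over $B^*$'' describes the non-equivariant ladder, and that version of the argument fails at exactly this point.
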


The proof is based to reducing this question to the $T$-equivariant case, when $X_\Delta$ is a toric variety for $T$. We begin with the case $X_\Delta$ nonsingular.

\begin{lem}\label{TKunnethLem}
Let $X_\Delta$ be a smooth toric variety for $T$, and let $Y$ be a variety with a trivial $T$-action. Then $X_\Delta \times Y$ satisfies Künneth formula in $B^T_*$.
\end{lem}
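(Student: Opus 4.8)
The plan is to leverage the decomposition theorem for $B^T_*(X_\Delta)$ obtained in the previous subsections, which exhibits $B^T_*(X_\Delta)$ as built up out of equivariant homologies of orbits $B^T_*(O_\sigma) \cong B^*_T/(\xi'_1,\ldots,\xi'_r)$, together with the equivariant analogue of the localization and Künneth machinery already established in \fref{EquivariantChernOfRepresentations}. Since $Y$ carries the trivial $T$-action, by \fref{TrivialAction} we have $B^T_*(Y) = B^*_T \widehat\otimes_{B^*} B_*(Y)$, so the equivariant Künneth statement for $X_\Delta \times Y$ can be compared directly against the ordinary one. The overall strategy is to prove the equivariant Künneth isomorphism
\begin{equation*}
B^T_*(X_\Delta) \otimes_{B^*_T} B^T_*(Y) \to B^T_*(X_\Delta \times Y)
\end{equation*}
by induction on the number of cones in $\Delta$, and only at the end pass down to the nonequivariant statement by applying the corollary $B^* \otimes_{B^*_T} B^T_*(-) \isomto B_*(-)$ together with the trivial-action identification.

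First I would set up the induction exactly as in the decomposition argument following \fref{EquivInj}: pick a maximal cone $\sigma$, remove its interior to pass from $X_\Delta$ to $X_{\Delta'}$, and use the short exact sequence
\begin{equation*}
0 \to B^T_*(O_\sigma) \to B^T_*(X_\Delta) \to B^T_*(X_{\Delta'}) \to 0.
\end{equation*}
Taking the product with $Y$ and using that $-\otimes_{B^*_T} B^T_*(Y)$ applied to this sequence (which is exact because the base case is a quotient of free $B^*_T$-modules, so the sequence is split in the relevant sense, or at worst remains right-exact) fits into a ladder against the localization sequence for $(X_\Delta \times Y, X_{\Delta'}\times Y, O_\sigma \times Y)$. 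The base case is $X_\Delta = O_\sigma$ an orbit, i.e. a product of punctured representations $(W_1-0)\times\cdots\times(W_n-0)$; here \fref{EquivariantChernOfRepresentations} gives the equivariant Künneth isomorphism for each factor $(W-0)\times(\text{rest})$ directly, and an iteration over the $n$ factors handles the whole orbit. The inductive step is then a five-lemma (or four-lemma, as in \fref{ModifiedTotaro}) applied to the ladder, with the outer vertical maps being isomorphisms by induction and by the base case.

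The main obstacle I anticipate is exactness on the top row after tensoring: the functor $-\otimes_{B^*_T} B^T_*(Y)$ need not be exact, since $B^*_T$ is a (graded) power series ring and $B^T_*(Y)$ need not be flat over it. The honest fix is to exploit the explicit structure of $B^T_*(O_\sigma)$ as a shifted copy of $B^*_T/(\xi'_1,\ldots,\xi'_r)$ where $\xi'_1,\ldots,\xi'_r$ form part of a regular system of parameters, so that the relevant short exact sequences are built from Koszul-type (regular sequence) quotients; multiplication by such a $\xi'_i$ is injective on the free module $B^*_T \widehat\otimes_{B^*} B_*(Y)$ by the explicit power-series description in \fref{TrivialAction}, which restores the needed injectivity at each stage. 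Concretely I would argue that $\xi_m$ acts injectively on $B^T_*(Y) = B^*_T\widehat\otimes_{B^*}B_*(Y)$ for any primitive form $m$, mirroring the injectivity argument in the proof of \fref{EquivInj}, and push the diagram chase through degree by degree so that the completions cause no trouble. Once the equivariant isomorphism is in hand, descending to $B_*$ is formal: apply $B^*\otimes_{B^*_T}(-)$, use the right-exactness of tensor together with the corollary identifying $B^*\otimes_{B^*_T}B^T_*(-)$ with $B_*(-)$, and match the two sides via the trivial-action computation for $Y$.
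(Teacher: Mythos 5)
Your strategy coincides with the paper's: induction on the number of cones, base case the torus handled by iterating \fref{EquivariantChernOfRepresentations}, and a ladder comparing the tensored decomposition sequence with the localization sequence of $(O_\sigma \times Y,\, X_\Delta\times Y,\, X_{\Delta'}\times Y)$. The technical ingredient you isolate --- injectivity of multiplication by the $\xi_i$ on $B^*_T\widehat\otimes_{B^*}B_*(Y)$, which is where the trivial action on $Y$ enters via \fref{TrivialAction} --- is also the right one. But you aim it at the wrong row of the ladder, and as written your chase does not close. Left-exactness of the top (tensored) row, which you single out as ``the main obstacle,'' is never needed: tensoring is right exact, and right-exactness on top is all the chase uses. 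What is indispensable is left-exactness of the \emph{bottom} row, i.e.\ injectivity of the pushforward $i_*\colon B^T_*(O_\sigma\times Y)\to B^T_*(X_\Delta\times Y)$, which the localization sequence alone does not provide. Indeed, if you do arrange the top row to be short exact, a direct chase identifies the kernel of the middle Künneth map with an isomorphic copy of $\ker i_*$; with the outer verticals isomorphisms, injectivity of the Künneth map for $X_\Delta\times Y$ is \emph{equivalent} to injectivity of $i_*$. So your setup yields surjectivity (a four-lemma), but no amount of exactness of the top row can substitute for the missing bottom-row injectivity.

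The fix is to redirect your $\xi$-injectivity argument at the geometric map, which is exactly what the paper does: for $\sigma$ generated by $e_1,\dots,e_r$, the orbit case (your base case) together with \fref{TrivialAction} gives $B^T_*(O_\sigma\times Y)\cong \left(B^*_T/(\xi_{r+1},\dots,\xi_n)\right)\widehat\otimes_{B^*}B_*(Y)$, and $i^*i_*$ is multiplication by $\xi_1\cdots\xi_r$ on this graded power-series module, which is injective by inspection; hence $i_*$ itself is injective. This is precisely the proof of \fref{EquivInj} with the factor $Y$ carried along --- the ``mirroring'' you allude to --- and once it is stated this way the bottom row becomes short exact and the five lemma applies. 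Two smaller points: your parenthetical claim that the decomposition sequence is ``split in the relevant sense'' is unjustified ($B^T_*(O_\sigma)$ is a cyclic, not free, $B^*_T$-module) but also unnecessary, since only right-exactness is used on top; and the final descent to $B_*$ does not belong in this lemma, which is a purely equivariant statement --- that reduction is performed later, in the proof of \fref{Kunneth}.
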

\begin{proof}
Suppose $O$ is a torus orbit in $X_\Delta$. Then, by Proposition \fref{EquivariantChernOfRepresentations} we know that the Künneth morphism
$$B^T_*(O) \otimes_{B_T^*} B^T_*(Y) \to B^T_*(O \times Y)$$
is an isomorphism. Moreover, if $\sigma$ is a maximal cone of $\Delta$, which we can assume without loss of generality to be generated by the standard lattice vectors $e_1,...,e_r$, then we can combine the above with Proposition \fref{TrivialAction} to conclude that 
\begin{align*}
B^T_*(O_\sigma \times Y) & \cong B^T_*(O_\sigma) \otimes_{B^*_T} B^T_* (Y) \\
&\cong \big(B^*_T / (\xi_{r+1},...,\xi_{n}) \big) \otimes_{B^*_T} \big( B^*_T \widehat \otimes_{B^*} B_*(Y) \big) \\
&\cong B^* [[\xi_1,...,\xi_r]]_\gr \widehat \otimes_{B^*} B_*(Y).
\end{align*}
If we denote by $i$ the closed embedding $O_\sigma \times Y \hookrightarrow X_\Delta \times Y$, then we can deduce as in the proof of Lemma \fref{EquivInj} that $i^*i_*$ is just multiplication by $\xi_1 \cdots \xi_r$, and therefore the pushforward $i_*$ is injective.

We can now combine the above investigation with localization exact sequences to obtain the commutative diagram
\begin{center}
\begin{tikzpicture}[scale=2]
\node (00) at (-1.5,0) {$0$};
\node (A1) at (0,1) {$B_*^T(O_\sigma) \otimes_{B^*_T} B_*^T(Y)$};
\node (B1) at (2.2,1) {$B_*^T(X_\Delta) \otimes_{B^*_T} B_*^T(Y)$};
\node (C1) at (4.4,1) {$B_*^T(X_{\Delta'}) \otimes_{B^*_T} B_*^T(Y)$};
\node (D1) at (5.8,1) {$0$};
\node (A2) at (0,0) {$B_*^T(O_\sigma \times Y)$};
\node (B2) at (2.2,0) {$B_*^T(X_\Delta \times Y)$};
\node (C2) at (4.4,0) {$B_*^T(X_{\Delta'} \times Y)$};
\node (D2) at (5.8,0) {$0$};

\path[every node/.style={font=\sffamily\small}]
(A1) edge[->] node[right]{$\cong$} (A2)
(B1) edge[->] (B2)
(C1) edge[->] node[right]{$\cong$} (C2)
(A1) edge[->] (B1)
(B1) edge[->] (C1)
(C1) edge[->] (D1)
(A2) edge[->] (B2)
(B2) edge[->] (C2)
(C2) edge[->] (D2)
(00) edge[->] (A2)
;
\end{tikzpicture}
\end{center}
with exact rows, where the rightmost vertical map can be assumed to be an isomorphism by induction on the number of cones in $\Delta$ (the base case of an empty fan being Corollary \fref{KunnethForT}). It follows from $5$-lemma that also the middle vertical arrow is an isomorphism.
\end{proof}

To generalize the previous result to singular varieties, we need to assume that the theory $B_*$ satisfies descent (this holds for $\Omega_*$ by \cite{Karu2}). Recall that if $B_*$ satisfies descent, then so does the equivariant version $B_*^T$.

\begin{lem}\label{TKunnethLem2}
Let $B_*$ be as above, $X_\Delta$ an arbitrary toric variety, and $Y$ a variety with a trivial $T$-action. Then $X_\Delta \times Y$ satisfies Künneth formula in $B^T_*$. 
\end{lem}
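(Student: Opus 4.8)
The plan is to reduce the singular case to the smooth case established in the previous lemma, using the descent sequence, which $B^T_*$ inherits from $B_*$. First I would fix a $T$-equivariant toric resolution $\pi\colon X_{\widetilde\Delta}\to X_\Delta$, i.e.\ a smooth refinement $\widetilde\Delta$ of the fan $\Delta$. Such a $\pi$ is proper and birational, hence a proper envelope, and proper envelopes are stable under base change, so $\pi\times 1_Y\colon X_{\widetilde\Delta}\times Y\to X_\Delta\times Y$ is again a proper envelope, with fibre product
\begin{equation*}
(X_{\widetilde\Delta}\times Y)\times_{X_\Delta\times Y}(X_{\widetilde\Delta}\times Y)=\big(X_{\widetilde\Delta}\times_{X_\Delta}X_{\widetilde\Delta}\big)\times Y.
\end{equation*}

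The heart of the argument is then a comparison of two descent sequences through the Künneth maps. Writing $F:=X_{\widetilde\Delta}\times_{X_\Delta}X_{\widetilde\Delta}$ and $M:=B^T_*(Y)$, I would apply the right-exact functor $-\otimes_{B^*_T}M$ to the descent sequence of $\pi$ to obtain the top row, and take the descent sequence of $\pi\times 1_Y$ as the bottom row, giving the commuting ladder
\begin{equation*}
\begin{array}{ccccccc}
B^T_*(F)\otimes_{B^*_T}M & \to & B^T_*(X_{\widetilde\Delta})\otimes_{B^*_T}M & \to & B^T_*(X_\Delta)\otimes_{B^*_T}M & \to & 0\\
\downarrow & & \downarrow & & \downarrow & & \\
B^T_*(F\times Y) & \to & B^T_*(X_{\widetilde\Delta}\times Y) & \to & B^T_*(X_\Delta\times Y) & \to & 0,
\end{array}
\end{equation*}
whose vertical maps are the Künneth morphisms. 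The squares commute because the Künneth morphism is natural with respect to proper pushforward: this is exactly axiom $(BM_3)$ applied with second factor $1_Y$, which makes it compatible with the operators $p_{1*}-p_{2*}$ and $\pi_*$ defining the descent sequences. The middle vertical map is an isomorphism by the previous lemma, since $X_{\widetilde\Delta}$ is smooth. The left vertical map is surjective because $F$ is itself a toric variety, being a fibre product of toric varieties along toric morphisms, and toric varieties are $T$-linear varieties; hence \fref{ModifiedTotaro}, applied to the oriented Borel–Moore homology theory $B^T_*$, gives the surjectivity. A diagram chase — the variant of the four lemma in which the left vertical map is surjective and the middle one an isomorphism — then forces the right vertical map, namely the Künneth morphism for $X_\Delta\times Y$, to be an isomorphism, which is the claim. (Note that only right-exactness of the top row is needed, so no flatness hypothesis on $M$ is required.)

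The main obstacle I anticipate is verifying that $F=X_{\widetilde\Delta}\times_{X_\Delta}X_{\widetilde\Delta}$ really is a toric (equivalently, $T$-linear) variety, so that \fref{ModifiedTotaro} applies to it; this is where the combinatorial structure of the fibre product of a subdivision with itself must be used, and some care is needed regarding reducedness of the scheme-theoretic fibre product. The only other point to check carefully is that the bottom row is genuinely the descent sequence of $\pi\times 1_Y$ and that its fibre product is $F\times Y$, together with the stability of proper envelopes under the base change $X_\Delta\times Y\to X_\Delta$; both are formal but should be stated explicitly.
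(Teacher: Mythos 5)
Your overall strategy coincides with the paper's: compare the descent sequence of $\pi$, tensored with $B_*^T(Y)$, against the descent sequence of $\pi \times 1_Y$, use the smooth case for the middle vertical map, use Proposition \fref{ModifiedTotaro} for surjectivity of the left vertical map, and conclude by a diagram chase. The gap is in the one step that actually requires geometric input: your justification that \fref{ModifiedTotaro} applies to $F = X_{\widetilde\Delta} \times_{X_\Delta} X_{\widetilde\Delta}$. You assert that $F$ ``is itself a toric variety, being a fibre product of toric varieties along toric morphisms,'' and parenthetically that toric is ``equivalently'' $T$-linear. Both claims are wrong. A fibre product of toric varieties along toric morphisms is in general not irreducible, hence not toric: already for the blowup $\pi\colon \widetilde X \to X$ of $\A^2$ at the origin, the fibre product $\widetilde X \times_X \widetilde X$ is the union of the diagonal copy of $\widetilde X$ and $E \times E \cong \Proj^1 \times \Proj^1$, two surfaces meeting along a curve. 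So the route you propose for the left-hand surjectivity fails as stated, and ``toric'' and ``$T$-linear'' are not equivalent notions (the first implies the second, not conversely).

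What is true, and what the paper proves, is the weaker statement that $F$ is a $T$-linear \emph{scheme}, which is all that \fref{ModifiedTotaro} needs. The argument: $F$ is stratified by the locally closed pieces $\widetilde O_1 \times_O \widetilde O_2$, where $\widetilde O_1, \widetilde O_2$ run over pairs of orbits of $X_{\widetilde\Delta}$ lying over a common orbit $O$ of $X_\Delta$; since $\widetilde\Delta$ refines $\Delta$, each restricted map $\widetilde O_i \to O$ is, in suitable coordinates, just a projection of split tori $(\alpha_1,\dots,\alpha_r) \mapsto (\alpha_1,\dots,\alpha_s)$, so each stratum is again a torus (though no longer a single $T$-orbit). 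Hence $F$ admits a filtration whose successive differences are tori, i.e.\ $F$ is $T$-linear, and the Künneth map for $F \times Y$ is surjective. You flagged exactly this point as your main anticipated obstacle, which is the right instinct, but the resolution is not to show $F$ is toric (it is not); it is to abandon irreducibility and verify the filtration-by-tori definition of linearity directly. The remainder of your proof (stability of proper envelopes under base change, identification of the fibre product of $\pi \times 1_Y$ with $F \times Y$, right-exactness of the tensored sequence, and the four-lemma chase) is correct and agrees with the paper.
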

\begin{proof}
Pick a toric resolution $X_{\widetilde \Delta}$ of $X_\Delta$, and denote $X = X_\Delta$, $\widetilde X = X_{\widetilde \Delta}$. By descent assumption, we have the commutative diagram
\begin{center}
\begin{tikzpicture}[scale=2]
\node (A1) at (0,1) {$B_*^T(\widetilde X \times_X \widetilde X) \otimes_{B^*_T} B_*^T(Y)$};
\node (B1) at (2.5,1) {$B_*^T(\widetilde X) \otimes_{B^*_T} B_*^T(Y)$};
\node (C1) at (5,1) {$B_*^T(X) \otimes_{B^*_T} B_*^T(Y)$};
\node (D1) at (6.5,1) {$0$};
\node (A2) at (0,0) {$B_*^T((\widetilde X \times_X \widetilde X) \times Y)$};
\node (B2) at (2.5,0) {$B_*^T(\widetilde X \times Y)$};
\node (C2) at (5,0) {$B_*^T(X \times Y)$};
\node (D2) at (6.5,0) {$0$};

\path[every node/.style={font=\sffamily\small}]
(A1) edge[->] (A2)
(B1) edge[->] node[right]{$\cong$} (B2)
(C1) edge[->] (C2)
(A1) edge[->] (B1)
(B1) edge[->] (C1)
(C1) edge[->] (D1)
(A2) edge[->] (B2)
(B2) edge[->] (C2)
(C2) edge[->] (D2)
;
\end{tikzpicture}
\end{center}
with exact rows, where the middle vertical map is known to be isomorphism by the previous lemma. In order to show that the rightmost vertical map is an isomorphism, it is enough to show that the leftmost vertical map is a surjection.

As $\widetilde X$ and $X$ are toric varieties and the map $\widetilde X \to X$ equivariant, we see that $\widetilde X \times_X \widetilde X$ has a filtration by tori. Indeed, if $\widetilde O$ is an orbit in $\widetilde X$ and $O$ is its image in $X$, the restricted map $\widetilde O \to O$ is essentially just the projection $(\alpha_1, ..., \alpha_r) \mapsto (\alpha_1, ..., \alpha_s)$, and hence $\widetilde{O} \times_O \widetilde{O}$ is isomorphic to a torus (although this is no longer necessarily a single $T$ orbit). Thus $\widetilde X \times_X \widetilde X$ is a $T$-linear scheme. We know that a product with a linear variety has a surjective Künneth map by Proposition \fref{ModifiedTotaro}, so we are done. 
\end{proof}

Bringing the theorem back to the ordinary case is now easy.
\begin{proof}[Proof of Theorem \fref{Kunneth}.] Let $X_\Delta$ be a toric variety with torus $T$, and let $Y$ be an arbitrary variety. Now we know that the equivariant Künneth map 
\begin{equation*}
B_*^T(X_\Delta) \otimes_{B^*_T} B_*^T(Y) \to B_*^T(X_\Delta \times Y)
\end{equation*}
is an isomorphism. This isomorphism is preserved after tensoring both sides with $B^*$ considered as a $B_T^*$-algebra in the natural way. On the right hand side, this tensor product equals $B_*(X_\Delta \times Y)$, and on the left hand side, we get
\begin{align*}
B^* \otimes_{B_T^*} (B_*^T(X_\Delta) \otimes_{B^*_T} B_*^T(Y)) &=  (B^* \otimes_{B_T^*} B_*^T(X_\Delta)) \otimes_{B^* \otimes_{B_T^*} B^*_T} (B^* \otimes_{B_T^*} B_*^T(Y)) \\
&= B_*(X_\Delta) \otimes_{B^*} B_*(Y).
\end{align*}
This proves the claim.
\end{proof}

\begin{rem}
Note how Lemma \fref{TKunnethLem} gives a $T$-equivariant Künneth isomorphism 
\begin{equation*}
B_*^T(X_\Delta) \otimes_{B^*_T} B_*^T(Y) = B_*^T(X_\Delta \times Y)
\end{equation*}
only if $Y$ has a trivial $T$-action. We can do better, as the following result illustrates (consider the case when $G = T$ and $T \times T$ acts on $X_\Delta$ codiagonally).
\end{rem}
\begin{thm}\label{EquivariantKunneth}
Let $B_*$ be an oriented Borel--Moore homology theory satisfying descent. Let $G$ be a linear algebraic group, and suppose that a toric variety $X_\Delta$ has a $T \times G$-action extending the $T$-action. Then, for any $G$-scheme $Y$, the Künneth morphism 
$$B_*^G(X_\Delta) \otimes_{B^*_G} B_*^G(Y) \to B_*^G(X_\Delta \times Y)$$
is an isomorphism.
\end{thm}
\begin{proof}
Recall that by Proposition \fref{GTEquivariant}, the $T$-equivariant version of $B^G_*$ is well-defined. Extend the $G$-action on $Y$ to a $T \times G$-action by letting $T$ act trivially. The arguments proving Lemmas \fref{TKunnethLem} and \fref{TKunnethLem2} also show that the Künneth morphism
$$(B^G)^T_*(X_\Delta) \otimes_{(B_G)_T^*} (B^G)^T_*(Y) \to (B^G)^T_*(X_\Delta \times Y)$$
is an isomorphism. One gets the desired result for $B^G_*$ with the same argument as in the proof of Theorem \fref{Kunneth}.
\end{proof}

\subsection{Universal coefficient theorem for operational cohomology}\label{UnivCoeffSec}

We are now ready to prove the universal coefficient theorem. Throughout this subsection, $B_*$ is going to denote a ROBM-homology theory, i.e., an oriented Borel--Moore homology theory with refined l.c.i. pullbacks (see \cite{Karu} for more details). Again, $\Omega_*$ is an example of such a theory, as is proved in \cite{LM} and \cite{Le}. For any such theory, we may construct the \emph{operational} bivariant group, and as a special case we get the operational cohomology theory $\op B^*$. The main purpose of this section is to prove the following theorem:

\begin{thm}\label{UnivCoeff}
Let $B_*$ be a ROBM-homology theory, and let $X$ be a proper variety having the property that the Künneth formula holds for $X \times Y$ for all $Y$. Then there is a natural isomorphism
\begin{equation*}
\op B^*(X) \cong \Hom_{B^*} (B_*(X), B^*).
\end{equation*}
\end{thm}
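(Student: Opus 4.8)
The plan is to construct the isomorphism by exhibiting a natural evaluation pairing and then showing it is both injective and surjective, using the Künneth hypothesis as the key structural input. First I would define the map $\op B^*(X) \to \Hom_{B^*}(B_*(X), B^*)$. An operational class $c \in \op B^*(X)$ is, by definition, a collection of operators $c_{X'} : B_*(X') \to B_{*-d}(X')$ compatible over all morphisms $X' \to X$ and with the various bivariant operations. Since $X$ is proper, there is a proper pushforward $p_* : B_*(X) \to B_*(\mathrm{pt}) = B^*$ induced by the structure map $p : X \to \mathrm{pt}$. The natural candidate for the map is $c \mapsto \big(\alpha \mapsto p_*(c_X(\alpha))\big)$, i.e.\ evaluate the operator on $X$ itself and push forward to a point; this is manifestly $B^*$-linear in $\alpha$ because pushforwards and operational classes are $B^*$-linear.

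Next I would prove injectivity. The crucial observation is that an operational class is determined by its action not just on $B_*(X)$ but on $B_*(X \times Y)$ for all $Y$, via the compatibility with the projection $X \times Y \to X$. The Künneth hypothesis enters precisely here: since $B_*(X \times Y) \cong B_*(X) \otimes_{B^*} B_*(Y)$, the action of $c$ on all the groups $B_*(X \times Y)$ is completely encoded by its action on $B_*(X)$ together with $B^*$-linearity in the $B_*(Y)$-factor. Thus if the evaluation functional $p_* \circ c_X$ vanishes, one should be able to bootstrap — using that the pairing $B_*(X) \otimes_{B^*} B_*(X) \to B_*(X \times X)$ and the diagonal or graph constructions let one recover $c_X$ from evaluation functionals — to conclude $c = 0$. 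The honest difficulty is organizing this: one must show that knowing $p_*(c_X(\alpha))$ for all $\alpha$ forces $c_{X'}$ to vanish for every $X' \to X$, which is where properness (to have the pushforward) and Künneth (to reduce $X' = X \times Y$ generically, or via the localization/descent machinery from the previous section) combine.

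For surjectivity, given a functional $\phi \in \Hom_{B^*}(B_*(X), B^*)$, I would build an operational class whose action on $B_*(X \times Y)$ is defined, under the Künneth isomorphism $B_*(X \times Y) \cong B_*(X) \otimes_{B^*} B_*(Y)$, by $\alpha \otimes \beta \mapsto \phi(\alpha) \cdot \beta \in B_*(Y) = B_*(\mathrm{pt} \times Y)$. One then must verify that this prescription, extended over all $X' \to X$ by pulling back along $X' \to X \times X'$ (the graph) and invoking Künneth on $X \times X'$, satisfies the bivariant axioms — compatibility with proper pushforward, refined l.c.i.\ pullback, and the projection formula. This is the step I expect to be the \textbf{main obstacle}: checking that the candidate operator is well-defined independently of how one presents a class in $B_*(X')$ through the Künneth decomposition, and that it genuinely commutes with refined pullbacks for \emph{arbitrary} (not just toric) morphisms into $X$. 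The refined l.c.i.\ structure (guaranteed because $B_*$ is a ROBM-theory) should supply exactly the compatibilities needed, and the naturality of the Künneth isomorphism should make the construction automatically functorial. I would finally confirm that the two constructions are mutually inverse by a direct computation on decomposable classes, where everything reduces to the definition of the evaluation pairing.
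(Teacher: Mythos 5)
Your proposal is correct in outline and follows essentially the same route as the paper's proof: the same Kronecker evaluation map $c \mapsto \pi_* \circ c_X$, injectivity by recovering $c_{Y \to X}$ from the graph embedding $\Gamma : Y \to X \times Y$ together with axiom ($C_4$) and the Künneth identification (properness of $X$ supplying the pushforward along $X \times Y \to Y$), and surjectivity by defining $c_{Y \to X} = (\phi \otimes 1) \circ \Gamma_*$ and verifying the axioms ($C_1$)--($C_4$). One correction of wording that matters: a class $\alpha \in B_*(X')$ is transported into $B_*(X \times X')$ by \emph{proper pushforward} along the graph (a closed embedding, since $X$ is separated), not by pullback --- the graph of a morphism into a possibly singular $X$ need not be l.c.i., so no such pullback exists --- and with this reading your worry about well-definedness disappears, since the operator is just a composite of canonical maps ($\Gamma_*$, the inverse of the Künneth isomorphism, and $\phi \otimes 1$), exactly as in the paper.
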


Combining this theorem with the Künneth isomorphism results of Section \fref{KunnethSec}, we get the following corollary.

\begin{cor}
Let $B_*$ be a ROBM-homology theory satisfying localization and descent, and let $X_\Delta$ be a complete toric variety for $T$. Then there is a natural isomorphism
\begin{equation*}
\op B^*(X) \cong \Hom_{B^*} (B_*(X), B^*).
\end{equation*}
Moreover, if we choose a $T \times G$-action on $X_\Delta$ extending the $T$-action, where $G$ is a linear algebraic group, then there is a natural isomorphism
\begin{equation*}
\op B^*_G(X) \cong \Hom_{B^*_G} (B^G_*(X), B^*_G).
\end{equation*}
\end{cor}

The proof of Theorem \fref{UnivCoeff} is formally the same as proof of the analogous result in \cite{FMSS}. Note that the $B^*$-module $\Hom_{B^*} (B_*(X), B^*)$ has a natural grading as the $\Hom$-module of graded modules over a graded ring. In order to have grading that coincides with the usual cohomological grading, we set $\Hom_{B^*}^k (B_*(X), B^*)$ to consist of degree preserving $B^*$-linear morphisms $B_{*+k}(X) \to B_*(pt)$. Before embarking on the proof, we quickly review the definition of operational cohomology groups.

\subsubsection*{Review on operational cohomology groups}

Here we recall the construction of operational cohomology groups. Let $X$ be any variety. An \emph{operational cohomology class} $c \in \op B^*(X)$ consists of morphisms
\begin{equation*}
c \equiv c_{Y \to X}: B_*(Y) \to B_*(Y)
\end{equation*}
for any morphism $Y \to X$. Moreover, these maps are required to satisfy the following compatibility axioms:

($C_1$) Given maps $Y' \stackrel f \to Y \to X$, where $f$ is proper, the diagram
\begin{center}
\begin{tikzpicture}[scale=2]
\node (A1) at (0,1) {$B_*(Y')$};
\node (B1) at (1,1) {$B_*(Y')$};
\node (A2) at (0,0) {$B_*(Y)$};
\node (B2) at (1,0) {$B_*(Y)$};

\path[every node/.style={font=\sffamily\small}]
(A1) edge[->] node[right]{$f_*$} (A2)
(B1) edge[->] node[right]{$f_*$} (B2)
(A1) edge[->] node[above]{$c$} (B1)
(A2) edge[->] node[above]{$c$} (B2)
;
\end{tikzpicture}
\end{center}
commutes, i.e., operational classes commute with proper pushforward.

($C_2$) Given maps $Y' \stackrel f \to Y \to X$, where $f$ is smooth, the diagram
\begin{center}
\begin{tikzpicture}[scale=2]
\node (A1) at (0,1) {$B_*(Y)$};
\node (B1) at (1,1) {$B_*(Y)$};
\node (A2) at (0,0) {$B_*(Y')$};
\node (B2) at (1,0) {$B_*(Y')$};

\path[every node/.style={font=\sffamily\small}]
(A1) edge[->] node[right]{$f^*$} (A2)
(B1) edge[->] node[right]{$f^*$} (B2)
(A1) edge[->] node[above]{$c$} (B1)
(A2) edge[->] node[above]{$c$} (B2)
;
\end{tikzpicture}
\end{center}
commutes, i.e., operational classes commute with smooth pullbacks.

($C_3$) If we have morphisms $Y \to X$, and $Y \to Z$, and an l.c.i. map $i: Z' \to Z$ inducing a Cartesian square
\begin{center}
\begin{tikzpicture}[scale=2]
\node (A1) at (0,1) {$Y'$};
\node (B1) at (1,1) {$Z'$};
\node (A2) at (0,0) {$Y$};
\node (B2) at (1,0) {$Z$};

\path[every node/.style={font=\sffamily\small}]
(A1) edge[->] (A2)
(B1) edge[->] node[right]{$i$} (B2)
(A1) edge[->] (B1)
(A2) edge[->] (B2)
;
\end{tikzpicture}
\end{center}
then the induced diagram
\begin{center}
\begin{tikzpicture}[scale=2]
\node (A1) at (0,1) {$B_*(Y)$};
\node (B1) at (1,1) {$B_*(Y)$};
\node (A2) at (0,0) {$B_*(Y')$};
\node (B2) at (1,0) {$B_*(Y')$};

\path[every node/.style={font=\sffamily\small}]
(A1) edge[->] node[right]{$i^!$} (A2)
(B1) edge[->] node[right]{$i^!$} (B2)
(A1) edge[->] node[above]{$c$} (B1)
(A2) edge[->] node[above]{$c$} (B2)
;
\end{tikzpicture}
\end{center}
commutes, i.e., operational classes commute with refined l.c.i. pullbacks.

($C_4$) If we have maps $Y \times Z \to Y \to X$, where the first map is the canonical projection, then 
\begin{equation*}
c(\alpha \times \beta) = c(\alpha) \times \beta
\end{equation*} 
in $B_*(Y \times Z)$, i.e., operational cohomology classes are compatible with the exterior product. This also show that the maps $c$ are linear over the coefficient ring $B^*$ of the theory.

Two of the three bivariant operations still make sense when restricted to the underlying cohomology theory. First of all, one defines the \emph{bivariant product} to simply be the composition of two bivariant classes. Moreover, for any morphism $f: X' \to X$ one can define the \emph{operational pullback}
\begin{equation*}
f^*: \op B^*(X) \to \op B^*(X')
\end{equation*}
simply by setting
\begin{equation*}
(f^*c)_{Y \to X'} = c_{Y \to X' \stackrel f \to X}.
\end{equation*}
One readily verifies that these operations produce operational cohomology classes.

\subsubsection*{Proof of Theorem \fref{UnivCoeff}}

We are now ready to prove the main theorem of this subsection. Let $X$ be a proper variety. We define the \emph{Kronecker duality map} 
\begin{equation*}
\op B^* (X) \to \Hom_{B^*}(B_*(X), B^*)
\end{equation*}
as the composition
\begin{equation*}
B_*(X) \stackrel c \to B_*(X) \stackrel {\pi_*} \to B_*(pt) = B^*,
\end{equation*}
where $\pi: X \to pt$ is the structure morphism. We begin with a simple observation.

\begin{lem}
Let $X$ be a proper variety satisfying the Künneth isomorphism criterion in \fref{UnivCoeff}. Then the Kronecker duality map is an injection.
\end{lem}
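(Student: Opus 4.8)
The plan is to unwind what membership in the kernel means and then reconstruct the entire operational class from the single pairing $\pi_* c_{X \to X}$. Suppose $c \in \op B^*(X)$ lies in the kernel, i.e. $\pi_* c_{X \to X}(\alpha) = 0$ in $B^*$ for every $\alpha \in B_*(X)$, where $\pi: X \to pt$ is the structure map. To conclude $c = 0$ I must show that $c_{Y \to X}(\beta) = 0$ for every morphism $g: Y \to X$ and every $\beta \in B_*(Y)$. The idea is to express $c_{Y \to X}(\beta)$ through the action of $c$ on the product $X \times Y$, where the Künneth hypothesis lets me split classes into exterior products and axiom ($C_4$) reduces everything to $c_{X \to X}$.

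First I would introduce the graph $\gamma = (g, \mathrm{id}_Y): Y \to X \times Y$, which is a closed immersion (hence proper) because $X$ is separated, and which satisfies $p_1 \gamma = g$ and $p_2 \gamma = \mathrm{id}_Y$ for the two projections. Since $X$ is proper, the projection $p_2: X \times Y \to Y$ is proper, so $p_{2*}$ is defined and $p_{2*} \gamma_* = (\mathrm{id}_Y)_* = \mathrm{id}$. Regarding $X \times Y$ as lying over $X$ via $p_1$, axiom ($C_1$) applied to the proper map $\gamma$ gives $\gamma_* c_{Y \to X}(\beta) = c_{X \times Y \to X}(\gamma_* \beta)$; applying $p_{2*}$ and using the previous identity yields the key formula
\begin{equation*}
c_{Y \to X}(\beta) = p_{2*}\, c_{X \times Y \to X}(\gamma_* \beta).
\end{equation*}

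Next I use that the Künneth map is surjective for $X \times Y$ to write $\gamma_* \beta = \sum_i \alpha_i \times \beta_i$ with $\alpha_i \in B_*(X)$ and $\beta_i \in B_*(Y)$. Since the structural map $p_1: X \times Y \to X$ is the projection away from the $Y$-factor, axiom ($C_4$) gives $c_{X \times Y \to X}(\alpha_i \times \beta_i) = c_{X \to X}(\alpha_i) \times \beta_i$. Because $p_2 = \pi \times \mathrm{id}_Y$, the compatibility of proper pushforward with exterior products (axiom ($BM_3$)) converts the pushforward into the $B^*$-action:
\begin{equation*}
p_{2*}\bigl(c_{X\to X}(\alpha_i) \times \beta_i\bigr) = \pi_*\bigl(c_{X\to X}(\alpha_i)\bigr) \cdot \beta_i.
\end{equation*}
Summing over $i$ gives $c_{Y \to X}(\beta) = \sum_i \pi_*\bigl(c_{X\to X}(\alpha_i)\bigr) \cdot \beta_i$, and each coefficient $\pi_*\bigl(c_{X\to X}(\alpha_i)\bigr)$ is exactly the value of the Kronecker dual of $c$ on $\alpha_i$, hence zero by assumption. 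Therefore $c_{Y \to X}(\beta) = 0$, and as $g$ and $\beta$ were arbitrary, $c = 0$.

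The two places where the hypotheses do real work are both in the passage from $c_{Y \to X}$ to $c_{X \times Y \to X}$: properness of $X$ is precisely what makes $p_2$ proper, so that $p_{2*}$ is a left inverse to the graph pushforward $\gamma_*$, and surjectivity of the Künneth map is precisely what lets me break $\gamma_* \beta$ into exterior products on which ($C_4$) can act. I would stress that only \emph{surjectivity} of Künneth is needed here, not the full isomorphism, which fits the expectation that the complementary statement — surjectivity of the Kronecker duality map — is where injectivity of Künneth will enter. Everything else is a formal manipulation of the operational axioms, so no genuine computation is required.
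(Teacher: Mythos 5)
Your proof is correct and follows essentially the same route as the paper's: the graph embedding, commuting $c$ past $\gamma_*$ via ($C_1$), splitting $\gamma_*\beta$ using surjectivity of the Künneth map, reducing to $c_{X \to X}$ via ($C_4$), and pushing forward along the (proper) projection $p_2$ to recover $c_{Y \to X}(\beta)$ from the Kronecker pairing. Your closing observation that only \emph{surjectivity} of the Künneth map is needed for injectivity is also made explicitly in the paper, in the remark immediately following its proof.
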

\begin{proof}
Let $Y \to X$ be a map, $\Gamma:  Y \to X \times Y$ be the graph embedding, and let $c$ be an operational cohomology class. Now we have the following diagram
\begin{center}
\begin{tikzpicture}[scale=2]
\node (X1) at (0,1) {$B_*(Y)$};
\node (X2) at (0,0) {$B_*(X) \otimes_{B^*} B_*(Y)$};
\node (Y1) at (2.5,1) {$B_*(Y)$};
\node (Y2) at (2.5,0) {$B_*(X) \otimes_{B^*} B_*(Y)$};
\node (Y3) at (4.5,0) {$B_*(Y)$};

\path[every node/.style={font=\sffamily\small}]
(X1) edge[->] node[right]{$\Gamma_*$} (X2) 
(X1) edge[->] node[above]{$c_{Y \to X}$} (Y1)
(Y1) edge[->] node[right]{$\Gamma_*$} (Y2)
(X2) edge[->] node[above]{$c_{X \times Y \to X}$} (Y2) 
(Y2) edge[->] node[above]{${\pi_2}_*$} (Y3)
(Y1) edge[->] node[above]{$1$} (Y3)
;
\end{tikzpicture}
\end{center}
where $\pi_2$ is the projection $X \times Y \to pt \times Y = Y$ inducing the proper pushforward
\begin{equation*}
{\pi_2}_* := \pi_* \otimes 1: B_*(X) \otimes_{B^*} B_*(Y) \to B_*(pt) \otimes_{B^*} B_*(Y) = B^*(Y),
\end{equation*}
by the basic compatibility properties of the exterior product with pushforwards. Moreover, by the operational cohomology axiom $C_4$, the map 
\begin{equation*}
c_{X \times Y \to X} : B_*(X) \otimes_{B^*} B_*(Y) \to B_*(X) \otimes_{B^*} B_*(Y) 
\end{equation*}
coincides with $c_{1} \otimes 1$, where $c_1$ is the homomorphism in the operational class $c$ corresponding to the identity map $X \to X$.

We have shown that $c_{Y \to X}$ coincides with the composition
\begin{equation*}
B_*(Y) \xrightarrow{\Gamma_*} B_*(X) \otimes_{B^*} B_*(Y) \xrightarrow{c_1 \otimes 1} B_*(X) \otimes_{B^*} B_*(Y) \xrightarrow{\pi_* \otimes 1} B_*(pt) \otimes B_*(Y). 
\end{equation*}
As $\Gamma_*$ does not depend on $c$, we have shown that the class $c$ is completely determined by its image under the Kronecker duality map, which is exactly what we wanted.
\end{proof}

\begin{rem}
In the above proof, we did not in fact use the Künneth isomorphism requirement in its full strength. Indeed, it would have been enough to require the Künneth morphism to be surjective. This is to make sure that the proper pushforward $\Gamma_*$ of an element $\beta \in B_*(Y)$ is of the form
\begin{equation*}
\Gamma_* (\beta) = \alpha_1 \times \beta_1 + \cdots + \alpha_r \times \beta_r,
\end{equation*}
and therefore its image in ${\pi_2}_* c$ is completely determined by the image of $c$ in the Kronecker morphism.
\end{rem}

In order to finish the proof of \fref{UnivCoeff}, it is enough to prove that any $B^*$-linear map $\psi : B_*(X) \to B^*$ gives rise to an operational cohomology class via the formula
\begin{equation*}
c_{Y \to X}  := B_* (Y) \stackrel {\Gamma_*} \to B_*(X) \otimes_{B^*} B_*(Y) \stackrel {\psi \otimes 1} \to B_*(Y). 
\end{equation*}
If the classes $c_{Y \to X}$ formed an operational cohomology class $c$, then the image of $c$ under the Kronecker duality map would be $\psi$ as the diagram
\begin{center}
\begin{tikzpicture}[scale=2]
\node (X0) at (-2, 1) {$B_*(X)$};
\node (X1) at (0,1) {$B_*(X) \otimes_{B^*} B_*(X)$};
\node (X2) at (0,0) {$B_*(X) \otimes_{B^*} B_*(pt)$};
\node (Y1) at (2.5,1) {$B_*(pt) \otimes_{B^*} B_*(X)$};
\node (Y2) at (2.5,0) {$B_*(pt) \otimes_{B^*} B_*(pt)$};

\path[every node/.style={font=\sffamily\small}]
(X0) edge[->] node[above]{$\Delta_*$} (X1)
(X0) edge[->] node[above]{$1$} (X2)
(X1) edge[->] node[right]{$1 \otimes \pi_*$} (X2) 
(X1) edge[->] node[above]{$\psi \otimes 1$} (Y1)
(Y1) edge[->] node[right]{$1 \otimes \pi_*$} (Y2)
(X2) edge[->] node[above]{$\psi \otimes 1$} (Y2) 
;
\end{tikzpicture}
\end{center}
commutes. This would prove the surjectivity of the Kronecker morphism. Note that this is where we require the Künneth morphism to be an isomorphism: if we cannot say that $B_*(X \times Y) =  B_*(X) \otimes_{B^*} B_*(Y)$, then we cannot define the function $\psi \otimes 1$.

\begin{proof}[Proof of Theorem \fref{UnivCoeff}.] We have to verify that the axioms $C_1$-$C_4$ are satisfied for a collection of morphism defined as above. 

($C_1$) Let $f: Y' \to Y$ be a proper morphism. Now we have the induced diagram
\begin{center}
\begin{tikzpicture}[scale=2]
\node (A1) at (0,1) {$B_*(Y')$};
\node (B1) at (2,1) {$B_*(X) \otimes_{B^*} B_*(Y')$};
\node (C1) at (4,1) {$B_*(Y')$};
\node (A2) at (0,0) {$B_*(Y)$};
\node (B2) at (2,0) {$B_*(X) \otimes_{B^*} B_*(Y)$};
\node (C2) at (4,0) {$B_*(Y)$};

\path[every node/.style={font=\sffamily\small}]
(A1) edge[->] node[right]{$f_*$} (A2)
(B1) edge[->] node[right]{$1 \otimes f_*$} (B2)
(A1) edge[->] node[above]{$\Gamma'_*$} (B1)
(A2) edge[->] node[above]{$\Gamma_*$} (B2)
(B1) edge[->] node[above]{$\psi \otimes 1$} (C1)
(B2) edge[->] node[above]{$\psi \otimes 1$} (C2)
(C1) edge[->] node[right]{$f_*$} (C2)
;
\end{tikzpicture}
\end{center}
The two small squares commute, and hence the big square commutes, proving ($C_1$). One proves the axioms ($C_2$) and ($C_3$) the same way, using the fact that smooth pullbacks and refined l.c.i. pullbacks commute with proper pushforwards.

($C_4$) Consider the composition $Y \times Z \to Y \to X$, where the first map is the canonical projection. Then the graph embedding $Y \times Z \to X \times Y \times Z$ equals $\Gamma \times 1_Z$, where $\Gamma$ is the graph embedding $Y \to X \times Y$. Thus the map associated to $Y \times Z \to X$ is given by
\begin{equation*}
B_*(Y \times Z) \stackrel{(\Gamma \times 1)_*} \to B_*(X) \otimes_{B^*} B_*(Y \times Z)  \stackrel{\psi \otimes 1} \to B_*(Y \times Z)
\end{equation*}
Now
\begin{equation*}
(\Gamma \times 1)_* (\alpha \times \beta) = \Gamma_*(\alpha) \times \beta,
\end{equation*} 
and 
\begin{equation*}
(\psi \otimes 1) (\Gamma_*(\alpha) \times \beta) = (\psi \otimes 1) \Gamma_*(\alpha) \times \beta,
\end{equation*}
so the collection of maps we have defined satisfies ($C_4$).
\end{proof}

This identification is functorial in the following sense. Suppose we have a morphism $f: X \to Y$ of proper varieties. Now there are two kinds of pullbacks one can think about in this situation. First of all, we have the usual operational pullbacks $f^*: \op B^* (Y) \to \op B^*(X)$. On the other hand, the morphism $f_*$ is proper, so we have the pullback
\begin{equation*}
(f_*)^*: \Hom_{B^*} (B_*(Y), B^*) \to \Hom_{B^*} (B_*(X), B^*) 
\end{equation*}
induced by the proper pushforward $f_*: B_*(X) \to B_*(Y)$. These work well with the Kronecker duality map, namely:

\begin{prop}
The two pullbacks above commute with the Kronecker morphism.
\end{prop}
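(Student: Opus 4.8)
The plan is to unwind both legs of the claimed square and reduce its commutativity to the pushforward-compatibility axiom $C_1$ together with the functoriality of pushforward to a point. Fix a class $c \in \op B^*(Y)$; the task is to compare the two images of $c$ in $\Hom_{B^*}(B_*(X), B^*)$. Note first that since $X$ and $Y$ are proper and $Y$ is separated, the morphism $f$ is automatically proper, so both the operational pullback $f^*$ and the Hom-pullback $f^*$ induced by $f_*: B_*(X) \to B_*(Y)$ are defined.

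First I would trace $c$ along the top-then-right path. Applying the Kronecker duality map on $Y$ produces the functional $\beta \mapsto \pi_{Y*}(c_{Y \to Y}(\beta))$ on $B_*(Y)$, where $\pi_Y: Y \to pt$ is the structure morphism and $c_{Y \to Y}$ is the component of $c$ at the identity of $Y$. Precomposing with the proper pushforward $f_*: B_*(X) \to B_*(Y)$, which by definition is the Hom-pullback $f^*$, yields the functional
\begin{equation*}
\alpha \mapsto \pi_{Y*}\bigl(c_{Y \to Y}(f_* \alpha)\bigr), \qquad \alpha \in B_*(X).
\end{equation*}

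Next I would trace $c$ along the left-then-bottom path. By the definition $(f^* c)_{Z \to X} = c_{Z \to X \stackrel{f}{\to} Y}$ of the operational pullback, the component of $f^* c$ at the identity $X \to X$ is exactly $c_{X \stackrel{f}{\to} Y}$, the component of $c$ indexed by $f$ itself. Hence the Kronecker image of $f^* c$ is $\alpha \mapsto \pi_{X*}(c_{X \stackrel{f}{\to} Y}(\alpha))$, where $\pi_X: X \to pt$. Since $\pi_X = \pi_Y \circ f$, functoriality of proper pushforward gives $\pi_{X*} = \pi_{Y*} \circ f_*$, so this functional rewrites as $\alpha \mapsto \pi_{Y*}\bigl(f_*(c_{X \stackrel{f}{\to} Y}(\alpha))\bigr)$.

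The two functionals then coincide as soon as $f_*\bigl(c_{X \stackrel{f}{\to} Y}(\alpha)\bigr) = c_{Y \to Y}(f_* \alpha)$ for every $\alpha$, and this is precisely axiom $C_1$ applied to the composite $X \stackrel{f}{\to} Y \stackrel{\mathrm{id}}{\to} Y$ with $f$ proper. No step presents a genuine obstacle; the only point demanding care is the bookkeeping of which component of an operational class is in play at each stage, since the operational pullback reindexes the components of $c$ along $f$ — it is exactly this reindexing that matches the component $c_{X \to Y}$ against the pushforward $f_*$ appearing on the Hom-side.
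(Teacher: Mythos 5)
Your proof is correct, but it takes a genuinely different route from the paper's. The paper proves this proposition \emph{inside} its universal-coefficient framework: it represents $c$ through its Kronecker image $\psi$ via the Künneth identification $B_*(Y \times Z) \cong B_*(Y) \otimes_{B^*} B_*(Z)$ and the graph embeddings, and then checks commutativity of a diagram whose rows are $\Gamma_* \,$, $\psi \otimes 1$, $\pi_{X*}$ (the Kronecker image of $f^*c$) and $\Delta_*$, $\psi \otimes 1$, $\pi_{Y*}$ (that of $c$), the vertical maps being $f_*$, $1 \otimes f_*$, $f_*$, $1$. You instead never invoke the Künneth formula or the representation of $c$ by a functional: you unwind the Kronecker map at the identity components, use the reindexing identity $(f^*c)_{X \to X} = c_{X \stackrel{f}{\to} Y}$, the functoriality $\pi_{X*} = \pi_{Y*} \circ f_*$, and reduce everything to axiom $C_1$ applied to $X \stackrel{f}{\to} Y \stackrel{\mathrm{id}}{\to} Y$ (your observation that $f$ is automatically proper, since $X$ is proper and $Y$ separated, is the right justification for $C_1$ being applicable). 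Your argument is more elementary and strictly more general: it establishes the compatibility for \emph{any} morphism of proper varieties, with no Künneth hypothesis on $X$ or $Y$, whereas the paper's diagram only makes sense when the Kronecker map has already been identified as an isomorphism via Theorem \fref{UnivCoeff}. What the paper's version buys in exchange is internal consistency with its main theorem: it exhibits the compatibility directly in terms of the functional $\psi$, which is the form in which the identification $\op B^*(X) \cong \Hom_{B^*}(B_*(X), B^*)$ is actually used.
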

\begin{proof}
Let $c \in \op B^*(Y)$ be an operational cohomology class. It is enough to show that the Kronecker image of $f^*c$ coincides with $\psi \circ f_*$, where $\psi$ is the Kronecker image of $c$. This follows directly from the commutativity of the diagram
\begin{center}
\begin{tikzpicture}[scale=2]
\node (A1) at (0,1) {$B_*(X)$};
\node (B1) at (2,1) {$B_*(Y) \otimes_{B^*} B_*(X)$};
\node (C1) at (4,1) {$B_*(X)$};
\node (D1) at (6,1) {$B^*$};
\node (A2) at (0,0) {$B_*(Y)$};
\node (B2) at (2,0) {$B_*(Y) \otimes_{B^*} B_*(Y)$};
\node (C2) at (4,0) {$B_*(Y)$};
\node (D2) at (6,0) {$B^*$};

\path[every node/.style={font=\sffamily\small}]
(A1) edge[->] node[right]{$f_*$} (A2)
(B1) edge[->] node[right]{$1 \otimes f_*$} (B2)
(A1) edge[->] node[above]{$\Gamma_*$} (B1)
(A2) edge[->] node[above]{$\Delta_*$} (B2)
(B1) edge[->] node[above]{$\psi \otimes 1$} (C1)
(B2) edge[->] node[above]{$\psi \otimes 1$} (C2)
(C1) edge[->] node[right]{$f_*$} (C2)
(C1) edge[->] node[above]{$\pi_{X*}$} (D1)
(C2) edge[->] node[above]{$\pi_{Y*}$} (D2)
(D1) edge[->] node[right]{$1$} (D2)
;
\end{tikzpicture}
\end{center}
where the top row is, by constructions of operational pullback and Kronecker map, just the Kronecker image of $f^*c$, and the bottom row is similarly the Kronecker image of $c$.
\end{proof}

\bibliographystyle{alphamod}
\bibliography{references}{}

\end{document}